\documentclass{amsart}

\usepackage[english]{babel}
\usepackage[utf8]{inputenc}

\usepackage{amsmath,amssymb,amsthm,mathtools}
\usepackage[linktocpage,unicode]{hyperref}

\usepackage{mathabx}

\usepackage{tikz}

\hypersetup{
    pdftoolbar=true,        
    pdfmenubar=true,        
    pdffitwindow=false,     
    pdfstartview={FitH},    
    pdftitle={},
    pdfauthor={},
    pdfsubject={},
    pdfkeywords={},
    pdfnewwindow=true,      
    colorlinks=true,       
    linkcolor=red,          
    citecolor=blue,        
    filecolor=magenta,      
    urlcolor=cyan,           
    unicode=true          
}

\newtheorem{theo}{Theorem}[section]
\newtheorem{coro}[theo]{Corollary}
\newtheorem{lemm}[theo]{Lemma}
\newtheorem{prop}[theo]{Proposition}

\theoremstyle{definition}
\newtheorem{defi}{Definition}
\newtheorem{rema}{Remark}

\DeclareMathOperator{\supp}{supp}
\DeclareMathOperator{\Fix}{Fix}
\DeclareMathOperator{\rank}{rk}
\DeclareMathOperator{\rk}{rk}
\DeclareMathOperator{\Cat}{Cat}
\DeclareMathOperator{\Nar}{Nar}

\DeclareMathOperator{\Inv}{Inv}
\DeclareMathOperator{\sqr}{sqr}
\DeclareMathOperator{\nir}{nir}

\newcommand{\lldot}{\mathrel{ \mathrlap{\ll}{\;\;\cdot} }}
\newcommand{\sqsubsetdot}{\mathrel{ \mathrlap{\sqsubset}{\;\cdot\;} }}

\newcommand{\F}{{\bf F}}
\renewcommand{\H}{{\bf H}}
\newcommand{\M}{{\bf M}}
\newcommand{\I}{{\bf I}}


\begin{document}
\date{}
\author{Philippe Biane}
\address{Laboratoire d'Informatique Gaspard Monge (LIGM), Université Paris-Est Marne-la-Vallée (UPEM), CNRS}
\author{Matthieu Josuat-Vergès}
\title[Noncrossing part., Bruhat order and the cluster complex]{Noncrossing partitions, Bruhat order and the cluster complex}

\keywords{Noncrossing partitions, Bruhat order, cluster complex}

\begin{abstract}
We introduce two order relations on finite Coxeter groups which refine the absolute and the Bruhat order, and establish some of their main properties. In particular we study the restriction of these orders to noncrossing partitions and show that the intervals for these orders can be enumerated in terms of the cluster complex.
The properties of our orders permit to revisit several results in Coxeter combinatorics, such as the Chapoton triangles and how they are related, the enumeration of reflections with full support, the bijections between clusters and noncrossing partitions.
\end{abstract}

\maketitle

\section{Introduction}
Let $W$ be a Coxeter group with $S$ a simple system of generators. There exists several natural order relations on $W$, namely the left or right weak order, the Bruhat order and the absolute order (this last order is associated to the length function with respect to the generating set of all reflections, see below). Two elements $v,w\in W$ such that $vw^{-1}$ is a reflection are always comparable with respect to both the absolute and  the Bruhat order. In this paper we  introduce two order relations on $W$, which we denote by $\sqsubset$ and $\ll$, which encode this situation, namely for any pair $v,w$ as above, such that $v< w$ (here $<$ denotes the absolute order), we  define $v\sqsubsetdot w$ if $v<_Bw$ (here $<_B$ is the Bruhat order) and $v\lldot w$ if $w<_B v$, then extend $\sqsubsetdot$ and $\lldot$ to order relations on $W$ by transitivity. 
We believe that these two orders are important tools for understanding noncrossing partitions, clusters, and their interrelations.  Bessis~\cite[Section~6.4]{bessis} suggested to study how the different orders on $W$ are related. The present paper can be considered as a first step in this direction.

Some versions of the orders $\sqsubset$ and $\ll$ were considered before this work. An  order called $\ll$ on the set of classical noncrossing partitions  was introduced independently by Belinschi and Nica \cite{belinschinica,nica} in the context of noncommutative probabilities, and by Senato and Petrullo \cite{petrullo} in order to study Kerov polynomials. The notion of noncrossing partition can be defined in terms of the geometry of the symmetric group \cite{biane} and it has been extended to general Coxeter systems: a set of noncrossing partitions can be associated to some  Coxeter element $c$ in $W$ as $ NC(W,c)=\{w\in W\,|\,w\leq c\}$ (see \cite{bessis,bradywatt}). The classical case corresponds to $c$ being the cycle $(1,\ldots,n)$ in the symmetric group $S_n$.  In this context, the order $\sqsubset$ on $NC(W,c)$ was introduced by the second author in \cite{josuat}, with a different definition, in order to give a refined enumeration of maximal chains in $ NC(W,c)$.

After defining  the two order relations $\sqsubset$ and $\ll$,  we will consider the restriction of these orders to the set of noncrossing partitions  and give a more direct characterization of the pairs $v,w\in NC(W,c)$ with $v\sqsubset w$ or $v\ll w$. We will also introduce interval partitions for arbitrary finite Coxeter groups, which generalize the classical interval partitions. These partitions play an important role in this study.
Then we will consider intervals for the two orders. These turn out to be closely related to the cluster complex of  Fomin and Zelevinsky~\cite{fominzelevinsky}. Originated from the theory of cluster algebras, the cluster complex is a simplicial complex with vertex set the almost positive roots of $W$ (see Section~\ref{cluster_sec} for details), associated to a standard Coxeter element in a finite Coxeter group.  Connections between noncrossing partitions and the cluster complex were first observed via an identity called the $F=M$ theorem,  conjectured by Chapoton~\cite{chapoton1}. We will see that the introduction of the two orders $\sqsubset$ and $\ll$
sheds new light on these relations. In particular an explicit bijection between the facets of the cluster complex and the noncrossing partitions associated with the same Coxeter element was given by Reading \cite{reading}, using the notion of $c$-sortable elements.  Another bijection was given in \cite{bradywatt2} in the case of bipartite Coxeter elements. We will recast this last bijection in terms of  the orders  $\sqsubset$ and $\ll$, which will allow us to extend it to arbitrary standard Coxeter elements, using the definition of the cluster complex by Reading. We will also give a bijection between the intervals for $\ll$ and faces of the positive cluster complex. We also show that intervals of height $k$ for  $\sqsubset$ are equienumerated with the faces of the cluster complex of size $n-k$ and give a bijective proof in the case where $c$ is a bipartite Coxeter element. Finally the orders $\ll$ and $\sqsubset$ will allow us to revisit the Chapoton triangles, to give new proofs of their properties and to refine them.

This paper is organized as follows. In Section~\ref{reflect_sec} we recall basic facts about finite reflections groups, root systems and Coxeter elements. In Section~\ref{noncrois_sec} we define the noncrossing partitions and recall their main properties. We also prove Proposition~\ref{krew1}, showing a relation between  between the Bruhat order and the Kreweras complement on $NC(W,c)$, which plays a crucial role in this work.  Section~\ref{order_sec} is the central part of this paper, in it we introduce the two order relations $\sqsubset$ and $\ll$, which are the main subject of this paper, we characterize these relations and obtain some  of their basic properties. We give a few examples in low rank in Section~\ref{examples_sec}. In Section~\ref{cluster_sec} we study the cluster complex associated to a standard Coxeter element, as defined by Reading \cite{reading}. This simplicial complex is defined with the help of a binary relation for which we give a new characterization.  We also recall facts about nonnesting partitions and Chapoton triangles. 
 In Section~\ref{interval_cluster_sec} we give some enumerative properties of  the intervals of the two order relations which exhibit several  connections with the cluster complex. In the following section, using the orders  $\sqsubset$ and $\ll$, we generalize the bijection of \cite{athanasiadisbradymaccammondwatt} between noncrossing partitions and maximal faces of the cluster complex  to encompass arbitrary standard Coxeter elements, then we use this to give a bijection between intervals for the order $\ll$ and faces of the positive cluster complex. The final section is devoted to some properties of the Chapoton triangles.
  
\section{Finite Coxeter or real reflection groups} \label{reflect_sec}

We fix notations, recall some basic facts about real reflection groups and  refer to \cite{bjornerbrenti,humphreys} for general information about these.

\subsection{Finite reflection groups, roots, reflections and inversions}

\subsubsection{Basic definitions}
Let $V$ be a finite dimensional euclidian space and $W\subset O(V)$ be a finite real reflection group.  
Let $S=\{s_1,\ldots, s_n\}$ be a simple system of reflections for $W$. 
We denote by $\ell$ the length function associated to $S$.  The set of all reflections in $W$ is denoted by $T$. 
A root is a unit vector normal to the hyperplane fixed by some $t\in T$.  The choice of the simple system $S$ imposes a way to split roots into positive and negative roots, and each reflection $t\in T$ has an associated positive root  which we denote $r(t)$. The set of positive roots $\Pi$ is thus in bijection with $T$. The set of negative roots  is $-\Pi$ and the set of all roots  is $\Pi\cup (-\Pi)$.  The fundamental chamber $C$ is the dual cone of the positive span of positive roots.

The {\it Bruhat order}, denoted by $\leq _B$, is defined as the transitive closure of the covering relations $v\lessdot _Bw$ if $vw^{-1}\in T$ and $\ell(v)<\ell(w)$.
A {\it left inversion} (respectively, {\it right inversion}) of $w\in W$ is a $t\in T$ such that $\ell(tw)<\ell(w)$ (respectively, $\ell(wt)<\ell(w)$).
The set of left (respectively, right) inversions is denoted by $\Inv_L(w)$ (respectively, $\Inv_R(w)$). Note that a bijection
$\Inv_L(w) \to \Inv_R(w)$ is given by $t \mapsto w^{-1} t w$.

\begin{prop}[\cite{bjornerbrenti}, Proposition~4.4.6] \label{inv_rootimages}
For $w\in W$ and $t\in T$, we have: 
\begin{align}
   w\big( r(t) \big) = 
    \begin{cases}
      -r( wtw^{-1} ) & \text{ if } t \text{ is a right inversion of } w, \\
       r( wtw^{-1} ) & \text{otherwise}.
    \end{cases}
\end{align}
In particular, $t$ is a right inversion of $w$ if and only if $w( r(t) ) \in (-\Pi)$ and a left inversion if and only if $w^{-1}( r(t) ) \in (-\Pi)$.
\end{prop}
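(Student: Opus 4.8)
\emph{The plan.} I would first observe that the two displayed cases form a single statement about signs, and isolate its genuine content. Since $W$ acts orthogonally and permutes the roots, $w\big(r(t)\big)$ is again a root, and the conjugate $wtw^{-1}$ is the reflection whose fixed hyperplane is $w\big(r(t)^{\perp}\big)=\big(w(r(t))\big)^{\perp}$; hence its positive root $r(wtw^{-1})$ is the unique element of $\{\,w(r(t)),\,-w(r(t))\,\}$ lying in $\Pi$. Consequently $w(r(t))=r(wtw^{-1})$ when $w(r(t))\in\Pi$ and $w(r(t))=-r(wtw^{-1})$ when $w(r(t))\in-\Pi$, so the whole proposition reduces to the single equivalence
\begin{equation*}
w\big(r(t)\big)\in-\Pi \iff t \text{ is a right inversion of } w,
\end{equation*}
that is, $\iff \ell(wt)<\ell(w)$.

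For this equivalence I would first reduce to one implication. Replacing $w$ by $wt$ negates the relevant root, since $wt\big(r(t)\big)=w\big(t(r(t))\big)=-w\big(r(t)\big)$, and exchanges the roles of $wt$ and $w$; so it suffices to prove that $w(r(t))\in\Pi$ implies $\ell(wt)>\ell(w)$. I would establish this by induction on $\ell(w)$, the base case $w=e$ being immediate since $\ell(t)\geq 1$. For the inductive step I would pick a left descent $s$ of $w$, write $w=su$ with $\ell(u)=\ell(w)-1$, and split according to the behaviour of $u\big(r(t)\big)$ under $s$: using that $s$ fixes $\Pi\setminus\{r(s)\}$ setwise and sends $r(s)\mapsto -r(s)$, the hypothesis $w(r(t))=s\big(u(r(t))\big)\in\Pi$ forces either $u(r(t))\in\Pi$ with $u(r(t))\neq r(s)$, or else $u(r(t))=-r(s)$. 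In the first case the inductive hypothesis gives $\ell(ut)>\ell(u)$, and I would then control $\ell(sut)=\ell(wt)$ via the simple-reflection descent criterion $\ell(sv)<\ell(v)\iff v^{-1}(r(s))\in-\Pi$; the second case is handled directly, since there $u(r(t))=-r(s)$ pins down the relevant reduced-word combinatorics.

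The \emph{in particular} clauses then follow formally. That $t$ is a right inversion iff $w(r(t))\in-\Pi$ is exactly the equivalence just proved. For left inversions, I would use that $\ell(tw)<\ell(w)\iff\ell(w^{-1}t)<\ell(w^{-1})$ (lengths are invariant under inversion and $t=t^{-1}$), so that $t$ is a left inversion of $w$ iff it is a right inversion of $w^{-1}$; applying the criterion already obtained to $w^{-1}$ yields $w^{-1}(r(t))\in-\Pi$. Equivalently, one can invoke the bijection $t\mapsto w^{-1}tw$ between $\Inv_R(w)$ and $\Inv_L(w)$ recorded just above.

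I expect the only real obstacle to be the \emph{direction} of the length change in the inductive step. That $\ell(wt)$ and $\ell(w)$ differ, and even that they differ by an odd number, is easy from the inversion-set count $\ell(w)=|\{\beta\in\Pi: w(\beta)\in-\Pi\}|$; but that count does not by itself pin down the sign. One must track how the chosen simple reflection $s$ interacts with the sign of $u(r(t))$, and it is precisely there that the case analysis together with the exchange condition does the work.
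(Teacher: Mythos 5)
The paper offers no proof of this statement at all: it is imported verbatim from Björner--Brenti (Proposition~4.4.6), so there is no in-paper argument to compare yours against. Your preliminary reductions are correct and standard: since $wtw^{-1}$ is the reflection fixing $\bigl(w(r(t))\bigr)^{\perp}$, its positive root is whichever of $\pm w(r(t))$ lies in $\Pi$, so the displayed formula collapses to the equivalence $t\in\Inv_R(w)\iff w(r(t))\in -\Pi$; the substitution $w\mapsto wt$ correctly reduces this to a single implication; and the left-inversion clause does follow from the right-inversion one via $w\mapsto w^{-1}$.

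The genuine gap is in Case~1 of your induction, exactly where you anticipated trouble. With $w=su$, $\ell(u)=\ell(w)-1$ and $u(r(t))\in\Pi\setminus\{r(s)\}$, the inductive hypothesis gives $\ell(ut)\geq \ell(u)+1$, and the descent criterion says $\ell(sut)=\ell(ut)\pm 1$ according to the sign of $t\bigl(u^{-1}(r(s))\bigr)$. The $+1$ branch is fine, but in the $-1$ branch you only get $\ell(wt)=\ell(ut)-1\geq\ell(u)=\ell(w)-1$, which is not $\ell(wt)>\ell(w)$; you would need $\ell(ut)\geq\ell(u)+3$, and nothing in your hypotheses controls the sign of $t\bigl(u^{-1}(r(s))\bigr)$ once $\langle r(s)\,|\,u(r(t))\rangle>0$. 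This branch really occurs: in type $A_2$ take $w=s_1$, $u=e$, $t=s_1s_2s_1$; then $u(r(t))\in\Pi\setminus\{r(s_1)\}$ yet $s_1$ is a left descent of $ut=t$, and the conclusion survives only because $\ell(ut)=\ell(u)+3$, which the inductive hypothesis does not supply. (Your Case~2, $u(r(t))=-r(s)$, is in fact vacuous rather than ``handled directly'': it forces $u^{-1}(r(s))=-r(t)\in-\Pi$, hence $\ell(su)<\ell(u)$ by the simple-descent criterion, contradicting $\ell(su)=\ell(u)+1$.) The clean repair is to induct on the \emph{opposite} implication, $w(r(t))\in-\Pi\Rightarrow\ell(wt)<\ell(w)$: there either $u(r(t))=r(s)$, in which case $utu^{-1}=s$ and $wt=u$ outright, or $u(r(t))\in-\Pi\setminus\{-r(s)\}$, and the inductive bound $\ell(ut)\leq\ell(u)-1$ together with the always-valid inequality $\ell(sut)\leq\ell(ut)+1$ gives $\ell(wt)\leq\ell(u)<\ell(w)$. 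Only an upper bound on $\ell(sut)$ is needed in that direction, so the induction closes, and your own $w\mapsto wt$ trick then yields the implication you wanted; this is essentially the standard sign-flip/strong-exchange proof.
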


\begin{coro}\label{refcommute}
If $t_1,t_2\in T$ commute then $t_1\in \Inv_R(w)\Leftrightarrow t_1\in \Inv_R(wt_2)$ i.e.
$w\leq_Bwt_1 \Leftrightarrow wt_2\leq_Bwt_2t_1$.
\end{coro}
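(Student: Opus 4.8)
The plan is to prove the inversion form of the statement, namely $t_1\in\Inv_R(w)\Leftrightarrow t_1\in\Inv_R(wt_2)$, since this is precisely what the Bruhat reformulation amounts to. Indeed, $w$ and $wt_1$ differ by the reflection $wt_1w^{-1}$, so they are Bruhat comparable, and $w\leq_B wt_1$ holds exactly when $\ell(w)<\ell(wt_1)$, that is, when $t_1\notin\Inv_R(w)$; in the same way $wt_2\leq_B wt_2t_1$ holds exactly when $t_1\notin\Inv_R(wt_2)$. Thus the displayed Bruhat equivalence and the inversion equivalence are the same statement read through complementation, and it suffices to treat the latter. (Here I assume, as one must, that $t_1\neq t_2$.)

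To compare the two right-inversion conditions I would invoke Proposition~\ref{inv_rootimages}, which translates them into conditions on root images: $t_1\in\Inv_R(w)$ iff $w\big(r(t_1)\big)\in(-\Pi)$, and $t_1\in\Inv_R(wt_2)$ iff $(wt_2)\big(r(t_1)\big)\in(-\Pi)$. Writing $(wt_2)\big(r(t_1)\big)=w\big(t_2(r(t_1))\big)$, the whole corollary reduces to the single geometric fact that $t_2$ fixes the root of $t_1$, namely $t_2\big(r(t_1)\big)=r(t_1)$; once this is known the two images coincide and the equivalence is immediate.

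The remaining point, which I regard as the only real content, is therefore $t_2\big(r(t_1)\big)=r(t_1)$, and here I would use that $t_1$ and $t_2$ are commuting reflections. From $t_1t_2=t_2t_1$ together with $t_2^2=1$ one obtains $t_2t_1t_2=t_1$, so $t_2$ conjugates $t_1$ to itself and hence must send the root $r(t_1)$ to $\pm r(t_1)$, a reflection being determined by its root up to sign. The sign $-$ is impossible: $t_2\big(r(t_1)\big)=-r(t_1)$ would place $r(t_1)$ in the $(-1)$-eigenline of $t_2$, forcing $r(t_1)\in\mathbb{R}\,r(t_2)$ and thus $t_1=t_2$, contrary to our assumption. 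Equivalently, distinct commuting reflections have orthogonal roots, so $t_2$ acts as the identity on $r(t_1)$. This yields $t_2\big(r(t_1)\big)=r(t_1)$ and completes the argument; the step I expect to be the subtlest is ruling out the sign $-$, though it is short.
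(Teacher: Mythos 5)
Your proof is correct and follows exactly the route the paper intends: the corollary is stated with no explicit proof as an immediate consequence of Proposition~\ref{inv_rootimages}, the point being that $(wt_2)(r(t_1))=w(r(t_1))$ because distinct commuting reflections have orthogonal roots. Your explicit verification of that orthogonality, and your observation that one must assume $t_1\neq t_2$ (the statement fails otherwise), are both accurate and fill in precisely what the paper leaves implicit.
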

If a simple reflection $s\in S$ is a right (respectively, left) inversion of $w$, it is called a right (respectively, left) {\it descent} of $w$.

If $w=s_{i_1}s_{i_2}\ldots s_{i_r}$ is a reduced expression for $w$ then the $r$ left inversions of $w$ are the reflections of the form
\begin{equation}\label{leftinv}
(s_{i_1}s_{i_2}\ldots s_{i_{l-1}}) s_{i_l}(s_{i_1}s_{i_2}\ldots s_{i_{l-1}})^{-1}, \qquad l=1,\ldots,r,
\end{equation}
and of course there is a similar formula for right inversions.

The {\it support} of $w\in W$, denoted by $\supp(w)$, is the subset of $S$ containing the simple reflections appearing in some 
reduced expression of $w$. It does not depend on the chosen reduced expression since any two of them are related by
a sequence of braid moves (see \cite{bjornerbrenti}). Equivalently, $\supp(w)$  is the smallest $J$ such that $w\in W_J$, see Section~\ref{Parasubg}.
We say that $w$ has {\it full support}, or that $w$ is {\it full}, if $\supp(w)=S$.
Using the explicit formula in \eqref{leftinv}, we have:
\[
  \supp(t) \subset \supp(w)
\]
if $t$ is a right or left inversion of $w$.

\begin{rema} \label{positivetosimple}
A root 
$r\in\Pi$ is in $\Delta$ iff it cannot be written as a sum $\sum_{r\in \Pi} c_r r$ where the coefficients $c_v$ are $\geq 0$
and at least two of them are nonzero. This characterizes the set of simple roots in $\Pi$.
Also, the simple roots form the unique set of $n$ positive roots having the property that the scalar product of any pair is nonpositive, see~\cite{humphreys}.
\end{rema}

\subsubsection{Coxeter elements}
A {\sl standard Coxeter element} in $(W,S)$  is some product $s_{i_1} \cdots s_{i_n}$ of all the simple reflections in some order. It is known that all standard Coxeter elements are conjugate in $W$, but in general they do not form a full conjugacy class. An element which is conjugated to some standard Coxeter element is called a Coxeter element. In this paper we will mostly consider standard Coxeter elements.

\begin{lemm} \label{scs_moves}
If $c$ is a standard Coxeter element and $s\in S$ a right or left descent of $c$, then $scs$ is also a standard Coxeter element.
All standard Coxeter elements are connected to each other via a sequence of such transformations.
\end{lemm}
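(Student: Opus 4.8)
The plan is to treat the two assertions separately, reducing the second to a statement about acyclic orientations of the Coxeter graph.

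First I would record that any expression $c=s_{i_1}\cdots s_{i_n}$ of a standard Coxeter element is reduced: since every simple reflection occurs we have $\supp(c)=S$, hence $\ell(c)\ge n$, while the expression has length $n$, so $\ell(c)=n$. Consequently every reduced word for $c$ has length $n$ and, having support $S$, must use each element of $S$ exactly once. Now suppose $s$ is a left descent of $c$, i.e. $\ell(sc)<\ell(c)$. Prepending $s$ to a reduced word of $sc$ produces a reduced word $c=s\,s_{j_2}\cdots s_{j_n}$ beginning with $s$; by the previous remark $\{s,s_{j_2},\dots,s_{j_n}\}=S$. Then $scs=(sc)s=s_{j_2}\cdots s_{j_n}\,s$ is again a product of all simple reflections, hence a standard Coxeter element. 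If instead $s$ is a right descent, the symmetric argument with a reduced word $c=s_{j_1}\cdots s_{j_{n-1}}s$ ending in $s$ gives $scs=s\,s_{j_1}\cdots s_{j_{n-1}}$, again a standard Coxeter element. This settles the first sentence.

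For the second sentence I would pass to the Coxeter graph $\Gamma$, with vertex set $S$ and an edge $\{s,s'\}$ whenever $s$ and $s'$ do not commute. Two reduced words for the same $c$ differ by a sequence of braid moves, and since all letters are distinct the only applicable moves are commutations of adjacent commuting generators; these never exchange a non-commuting pair. Hence the rule ``orient $s\to s'$ when $s$ precedes $s'$ and $\{s,s'\}$ is an edge'' assigns to each standard Coxeter element a well-defined acyclic orientation of $\Gamma$, and this assignment is a bijection onto the acyclic orientations (surjectivity via topological sorting, injectivity because two orders agreeing on every non-commuting pair differ by commutations). Under it, $s$ is a left (resp. right) descent of $c$ if and only if $s$ can be brought to the front (resp. back) of some reduced word, i.e. precedes (resp. follows) all its neighbours, i.e. is a source (resp. sink). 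The transformation $c\mapsto scs$ with $s$ a left descent moves $s$ from the front to the back, thereby reversing exactly the edges at $s$; that is, it turns the source $s$ into a sink, and symmetrically the right-descent move turns a sink into a source. Thus the second assertion is equivalent to the classical statement that any two acyclic orientations of a finite graph are connected by a sequence of such source-to-sink and sink-to-source flips.

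To prove this last statement I would first establish the auxiliary fact that, by source flips alone, any prescribed vertex $v$ can be made a source. If $v$ is not already a source, let $A$ be the set of vertices admitting a directed path to $v$; then $|A|\ge 2$, and the orientation induced on $A$ has a source $u_0\ne v$. Any in-edge of $u_0$ in $\Gamma$ would originate at an ancestor of $u_0$, hence at an element of $A$, contradicting the choice of $u_0$; so $u_0$ is a source of $\Gamma$. Flipping it keeps the orientation acyclic and strictly shrinks $A$, and iterating makes $v$ a source. With this in hand I would argue by induction on $n=|S|$: given two standard Coxeter elements, make a common vertex $v$ a source in both, and connect the orientations they induce on $\Gamma\setminus\{v\}$ by the inductive hypothesis. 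The delicate point — and the one I expect to be the main obstacle — is to realise these lower-rank flips inside $\Gamma$ without permanently destroying the status of $v$, since flipping any neighbour of $v$ reverses the edge joining it to $v$; handling this coupling is precisely the content of the classical connectedness theorem for acyclic orientations under source/sink flips, which I would invoke (or reprove along these lines). Finally, since a source-to-sink flip at $s$ is undone by the sink-to-source flip at $s$, the moves generate a symmetric relation, so it suffices to connect every standard Coxeter element to a single fixed reference one.
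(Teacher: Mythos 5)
The paper does not prove this lemma itself: it simply cites \cite[Section~3.16]{humphreys}. Your first paragraph is a correct and complete proof of the first assertion (modulo the small point that $\supp(c)=S$ should be justified from the possibly non-reduced word, e.g.\ by noting that $c\in W_{S\setminus\{s\}}$ would force $s=a^{-1}cb^{-1}\in W_{S\setminus\{s\}}$ upon writing $c=asb$). Your translation of the second assertion into acyclic orientations of the Coxeter graph --- standard Coxeter elements correspond to acyclic orientations, left/right descents to sources/sinks, and $c\mapsto scs$ to a source-to-sink or sink-to-source flip --- is accurate and is indeed the standard route.

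The gap is in the step you delegate to ``the classical statement that any two acyclic orientations of a finite graph are connected by such flips'': that statement is false for a general finite graph. On a cycle the quantity (number of edges oriented clockwise) minus (number oriented counterclockwise) is invariant under both kinds of flip, since flipping a source reverses one edge of each kind along the cycle; already for a triangle this splits the six acyclic orientations into two flip classes of size three. (The correct general statement is that two acyclic orientations are flip-equivalent if and only if they have the same flow difference around every cycle.) What rescues the lemma is a fact you never invoke: the Coxeter graph of a \emph{finite} Coxeter group is a forest, so there are no cycles to obstruct connectivity. Even granting that, your inductive sketch does not close: you correctly identify the coupling between flips at neighbours of $v$ and the orientation of the edges incident to $v$ as ``the main obstacle'' and then leave it unresolved. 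So as written the second assertion is not proved. To finish, you need both the forest property of the diagram and an actual connectivity argument for forests --- for instance, induction on a leaf $\ell$ of the tree: use your (correct) ``make a prescribed vertex a source'' lemma to make $\ell$ a source in both orientations, observe that a flip at the unique neighbour of $\ell$ is then the only problematic move and can be preceded and followed by a flip at $\ell$ itself, and conclude by induction on $T\setminus\{\ell\}$.
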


See \cite[Section~3.16]{humphreys} for a proof.

Let $S=S_+\cup S_-$ be a partition such that all $s_i$ in $S_+$ commute and all $s_i$ in $S_-$ commute (such a partition always exists), then the standard Coxeter element $c=c_+c_-$ where $c_\pm=\prod_{s\in S_\pm}s$ is called a {\sl bipartite Coxeter element}.

\subsubsection{Absolute length and the absolute order}

  The {\sl absolute length} is the length function associated to the generating set $T$: 
\begin{equation}\label{absolute}
\ell_T(w) =\min\{ k\geq 0\,|\, w\ \text{can be expressed  as  a product of $k$ reflections}\}.
\end{equation}
This quantity has a geometric interpretation (see \cite[Proposition~2.2]{bradywatt}):
\begin{equation} \label{absolutelengthfixdim}
\ell_T(w) = n - \dim(\Fix(w))
\end{equation}
where $\Fix(w)=\ker(w-I)$.

We  call a  factorization $v=v_1\ldots v_k$ in $W$ {\sl minimal} if 
\begin{equation}\label{minfact}
 \ell_T(v)=\ell_T(v_1)+\ldots+\ell_T(v_k).
\end{equation}
The following elementary lemma is well known, cf.~\cite{bessis}.

\begin{lemm}\label{elemlemm}
Let $v=v_1\ldots v_k$ be a minimal factorization, then for any subsequence $i_1\ldots i_l$ with $1\leq i_1<\ldots <i_l\leq k$ the factorization $v_{i_1}\ldots v_{i_l}$ is minimal, moreover $v_{i_1}\ldots v_{i_l}\leq v$.
\end{lemm}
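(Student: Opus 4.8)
The plan is to derive everything from two elementary properties of the reflection length. The first is subadditivity, $\ell_T(xy)\le \ell_T(x)+\ell_T(y)$, which is immediate from the definition \eqref{absolute} by concatenating expressions as products of reflections. The second is conjugation invariance, $\ell_T(hgh^{-1})=\ell_T(g)$, which follows from \eqref{absolutelengthfixdim} since $\Fix(hgh^{-1})=h\,\Fix(g)$. I also use the standard characterization of the absolute order, $x\le y$ iff $\ell_T(x)+\ell_T(x^{-1}y)=\ell_T(y)$, together with its transitivity. A first consequence of subadditivity is a squeezing argument: for any split of $v=v_1\cdots v_k$ into a prefix and a suffix at position $j$, subadditivity gives $\ell_T(v)\le \ell_T(v_1\cdots v_j)+\ell_T(v_{j+1}\cdots v_k)\le \sum_{i\le j}\ell_T(v_i)+\sum_{i> j}\ell_T(v_i)=\ell_T(v)$, so all inequalities are equalities. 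In particular every prefix $v_1\cdots v_j$ and every suffix $v_{j+1}\cdots v_k$ is itself a minimal factorization.

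Since an arbitrary subsequence is obtained from $v_1\cdots v_k$ by deleting the unwanted factors one at a time, it suffices to treat a single deletion, which is the core of the argument. Suppose $v=agb$ is a minimal factorization in which $g=v_m$ is one factor, $a=v_1\cdots v_{m-1}$ and $b=v_{m+1}\cdots v_k$, and set $u=ab$. From the identity $agb=(ab)(b^{-1}gb)$, subadditivity and conjugation invariance give $\ell_T(v)=\ell_T(agb)\le \ell_T(ab)+\ell_T(b^{-1}gb)=\ell_T(ab)+\ell_T(g)$, whence $\ell_T(ab)\ge \ell_T(v)-\ell_T(g)=\ell_T(a)+\ell_T(b)$; since the reverse inequality is subadditivity, $\ell_T(ab)=\ell_T(a)+\ell_T(b)$. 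Because $a$ and $b$ are minimal as factored, being a prefix and a suffix, the induced factorization of $u$ obtained by deleting $g$ is again minimal. Moreover $(ab)^{-1}v=b^{-1}gb$, so $\ell_T(ab)+\ell_T\big((ab)^{-1}v\big)=\ell_T(a)+\ell_T(b)+\ell_T(g)=\ell_T(v)$, which is exactly the condition $u\le v$.

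Iterating this single deletion, I remove the factors not indexed by $i_1<\cdots<i_l$ one by one. At each step the current factorization is minimal, so the deletion step applies again, and the resulting elements form a descending chain $v\ge\cdots\ge v_{i_1}\cdots v_{i_l}$ for the absolute order. Transitivity yields $v_{i_1}\cdots v_{i_l}\le v$, while the final factorization is minimal by the same token, proving both assertions. The only genuine point is the single-deletion step, and within it the key device is the rewriting $agb=(ab)(b^{-1}gb)$, which turns the deletion of an interior factor into a conjugation and thereby makes subadditivity tight; everything else is bookkeeping with the prefix and suffix minimality supplied by the squeezing argument.
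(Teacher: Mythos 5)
Your proof is correct and rests on the same two ingredients as the paper's: the triangle inequality for $\ell_T$ (your squeezing argument is the paper's prefix case) and the conjugation trick that rearranges a minimal factorization without changing the lengths of the factors. The only difference is packaging — you delete an interior factor in one stroke via $agb=(ab)(b^{-1}gb)$, whereas the paper performs the equivalent sequence of adjacent swaps $v_iv_{i+1}=v_{i+1}(v_{i+1}^{-1}v_iv_{i+1})$ to move the chosen factors to the front and then invokes the prefix case — so this is essentially the same argument.
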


\begin{proof} 
If $i_1=1,i_2=2,\ldots ,i_l=l$ the statement is a simple consequence of the triangle inequality for $\ell_T$. In the general case observe that for any $i<k$ the factorization 
$v=v_1\ldots v_{i-1}\hat v_{i}\hat v_{i+1}v_{i+2}\ldots v_k$ with 
$\hat v_{i}=v_{i+1}$ and $\hat v_{i+1}=v_{i+1}^{-1}v_iv_{i+1}$ is again a minimal. Using this observation we can move successively $v_{i_1},v_{i_2},\ldots,v_{i_l}$ to the beginning and reduce to the first case.
\end{proof}

\begin{rema}The preceding lemma implies that, contrary to the case of reduced decomposition into  simple reflections, in the case of a minimal factorization into reflections  every subword of a reduced word is reduced. In order to avoid confusion, in this paper we will reserve the expressions ``reduced word" and      ``subword" to the case of factorizations into simple reflections, related to the Bruhat order and reserve the expression ``minimal factorization" to the case of factorizations satisfying  (\ref{minfact}).
\end{rema}

The absolute length (\ref{absolute}) allows to define an order relation on $W$, the {\sl absolute order} denoted here by $\leq$:
\[
  v\leq w\quad \text{if}\quad  \ell_T(w)= \ell_T(v^{-1}w)+\ell_T(v).
\]
In particular a cover relation for this order, denoted by $v\lessdot w$, holds if and only if $vw^{-1}\in T$ and $\ell_T(v)=\ell_T(w)-1$. The following properties of the absolute order are immediate or follow directly from Lemma \ref{elemlemm}.

\begin{prop}\label{propelem}
The absolute order is invariant under conjugation and inversion, namely for all $u,v,w\in W$ one has
\begin{equation}
v\leq w\Longleftrightarrow v^{-1}\leq w^{-1}\Longleftrightarrow uvu^{-1}\leq uwu^{-1}.
\end{equation}

Let  $u,v,w\in W$ be such that $u\leq v\leq w$ then 
$u^{-1}v\leq u^{-1}w$ and $u\leq uv^{-1}w\leq w$. 
\end{prop}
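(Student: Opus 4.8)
The plan is to treat the two assertions separately, deriving the first purely from the invariance of the absolute length $\ell_T$ and the second from Lemma~\ref{elemlemm}.

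For the invariance under conjugation and inversion, I would first record the two facts that make $\ell_T$ itself invariant: writing $w$ as a product of $\ell_T(w)$ reflections and conjugating each factor by $u$ (respectively reversing the product and using that reflections are involutions) shows $\ell_T(uwu^{-1})=\ell_T(w)$ and $\ell_T(w^{-1})=\ell_T(w)$. Then I would simply rewrite the defining equality $\ell_T(w)=\ell_T(v^{-1}w)+\ell_T(v)$ under each operation. For conjugation, the condition for $uvu^{-1}\leq uwu^{-1}$ reads $\ell_T(uwu^{-1})=\ell_T(uv^{-1}wu^{-1})+\ell_T(uvu^{-1})$, and each of these three terms equals the corresponding term for $v\leq w$ by conjugation-invariance of $\ell_T$, so the two conditions coincide.

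For inversion, the condition for $v^{-1}\leq w^{-1}$ reads $\ell_T(w^{-1})=\ell_T(vw^{-1})+\ell_T(v^{-1})$; here $\ell_T(w^{-1})=\ell_T(w)$ and $\ell_T(v^{-1})=\ell_T(v)$ are immediate, so the only point to verify is $\ell_T(vw^{-1})=\ell_T(v^{-1}w)$. This is where the slight asymmetry between $v^{-1}w$ and $vw^{-1}$ must be handled, and I expect it to be the only genuinely non-automatic step in the first assertion: since $vw^{-1}=v(w^{-1}v)v^{-1}$ is conjugate to $(v^{-1}w)^{-1}=w^{-1}v$, the invariance of $\ell_T$ under both conjugation and inversion yields the required equality, and the two conditions again coincide.

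For the second assertion, the key step is to combine the two hypotheses into a single factorization. Adding the relations $\ell_T(v)=\ell_T(u)+\ell_T(u^{-1}v)$ (from $u\leq v$) and $\ell_T(w)=\ell_T(v)+\ell_T(v^{-1}w)$ (from $v\leq w$) gives $\ell_T(w)=\ell_T(u)+\ell_T(u^{-1}v)+\ell_T(v^{-1}w)$, so that $w=u\cdot(u^{-1}v)\cdot(v^{-1}w)$ is a minimal factorization in the sense of \eqref{minfact}. Everything then follows by applying Lemma~\ref{elemlemm} to this three-term factorization: the subsequence $(u^{-1}v,\,v^{-1}w)$ shows that $(u^{-1}v)(v^{-1}w)=u^{-1}w$ is minimal, which is exactly $u^{-1}v\leq u^{-1}w$; and the subsequence $(u,\,v^{-1}w)$ shows both that $u\cdot(v^{-1}w)=uv^{-1}w$ is minimal (hence $u\leq uv^{-1}w$) and, via the ``moreover'' clause of the lemma, that $uv^{-1}w\leq w$. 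The only thing to observe here is the trivial identity $uv^{-1}w=u\cdot(v^{-1}w)$, after which no further computation remains.
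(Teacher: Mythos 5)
Your proof is correct and follows exactly the route the paper intends: the invariance statements are read off from the conjugation- and inversion-invariance of $\ell_T$ (with the one non-trivial identity $\ell_T(vw^{-1})=\ell_T(v^{-1}w)$ handled correctly via conjugacy), and the second assertion is extracted from the minimal factorization $w=u\cdot(u^{-1}v)\cdot(v^{-1}w)$ by applying Lemma~\ref{elemlemm} to subsequences. The paper itself omits the details, stating only that the properties are immediate or follow directly from Lemma~\ref{elemlemm}, so your write-up is a faithful expansion of that argument.
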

\subsection{Parabolic subgroups}\label{Parasubg}
\subsubsection{Basic definitions}
Let $J\subset S$, then the {\sl standard parabolic subgroup} $W_J$ is the subgroup generated by $J$.
If $s\in S$ we will also use the notation $W_{\langle s\rangle}$ for the parabolic subgroup associated with $S\setminus\{s\}$.
A {\sl parabolic subgroup} is any subgroup conjugate to some $W_J$. 


The parabolic subgroups have the form
\[
  P= \{ w\in W \; : \; w(x)=x \text{ for all } x\in E \}
\]
for some subspace $E \subset V$, moreover if 
 $P$ is a parabolic subgroup and  
\[
  \Fix( P )=  \{ x \in V \; :\; w(x)=x \text{ for all } w\in P  \}
\]
then $P$ is itself a reflection group in $O( \Fix(P)^\perp )$. 
In particular, to each $w\in W$ we can associate a parabolic subgroup $\Gamma(w)$:
\begin{equation}\label{Gamma}
   \Gamma(w) =  \{ v\in W \; : \; \Fix(w) \subset \Fix(v) \}.
\end{equation}

\subsubsection{Simple generators and roots}
Let $P\subset W$ be a parabolic subgroup. Then $P$ is itself a reflection group, with reflection set $T\cap P$, and its roots form a subset of those of $W$. A natural set of positive roots for $P$ is $\Pi(P) = \Pi \cap P$.  Accordingly, there is a unique set of simple roots $\Delta(P)$ (see Remark~\ref{positivetosimple} above) and a set of simple reflections $S(P) = r^{-1}( \Delta(P) )$. We have $\Delta(P) \subset \Pi$. Note that  $\Delta(P)\subset \Delta$ does not hold in general, this only happens for standard parabolic subgroups.

Since there are several definitions of these sets $\Delta(P)$ or $S(P)$ given in the literature, in order to apply results from various references, it is in order to check that they are all equivalent.

\begin{rema}
A set of positive roots is $\Delta(P)$ for some parabolic subgroup $P$ if and only if they are such that the scalar product of any pair is nonpositive.
\end{rema}

%
%

\begin{prop}
 The simple reflections of $P$ are the reflections $t\in T\cap P$ satisfying $\Inv_R(t)\cap P = \{t\}$.
\end{prop}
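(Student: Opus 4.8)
The plan is to reduce the statement to a standard characterization of simple reflections \emph{inside the reflection group $P$ itself}, and then to check that the notion of right inversion relative to $P$ agrees with the restriction to $P$ of the notion of right inversion relative to $W$. Recall the general fact, valid in any finite reflection group, that the number of right inversions of an element equals its length. Applying this to $P$, equipped with its positive system $\Pi(P)$, its simple system $\Delta(P)$, its reflection set $T\cap P$ and its length function $\ell_P$, a reflection $t\in T\cap P$ is simple in $P$ if and only if $\ell_P(t)=1$, which happens if and only if $t$ has exactly one right inversion computed inside $P$. Writing $\Inv_R^{P}(t)$ for that set, and noting that $t\in\Inv_R^{P}(t)$ always holds since $\ell_P(t\,t)=0<\ell_P(t)$, we conclude that $t$ is a simple reflection of $P$ if and only if $\Inv_R^{P}(t)=\{t\}$.

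It then remains to identify $\Inv_R^{P}(t)$ with $\Inv_R(t)\cap P$. Here I would invoke Proposition~\ref{inv_rootimages}, which applies equally to $P$ and to $W$. For $u\in T\cap P$ the associated positive root $r(u)$ lies in $\Pi(P)$, hence is simultaneously a positive root of $P$ and of $W$. Since $P$ stabilizes its own root subsystem, $t\big(r(u)\big)$ is again a root of $P$; and because $\Pi(P)$ is precisely the set of roots of $P$ that are positive in $W$, the root $t\big(r(u)\big)$ is negative in $P$ if and only if it is negative in $W$. Applying Proposition~\ref{inv_rootimages} once inside $P$ and once inside $W$, we get $u\in\Inv_R^{P}(t)$ iff $t\big(r(u)\big)\in-\Pi(P)$, and $u\in\Inv_R(t)$ iff $t\big(r(u)\big)\in-\Pi$; by the sign-compatibility just noted these two conditions coincide. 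Therefore $\Inv_R^{P}(t)=\Inv_R(t)\cap P$, and combining this with the previous paragraph yields the proposition.

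The step I expect to require the most care is the sign-compatibility used above: one must be certain that $\Pi(P)$ really is the trace on the root subsystem of $P$ of the ambient positive system $\Pi$, so that no root of the subsystem can switch sign when one passes between the two notions of positivity. This is exactly the content of the definition $\Pi(P)=\Pi\cap P$ recalled before the statement, together with the fact that $P$ is a reflection group on $\Fix(P)^{\perp}$ preserving its roots. Once this is granted the two inversion sets match term by term, and the remaining ingredients — that length equals the number of inversions and that the unique length-one elements of a reflection group are its simple reflections — are entirely standard, so the argument closes without further calculation.
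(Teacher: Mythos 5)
Your argument is correct and follows essentially the same route as the paper's (very terse) proof: reduce to the fact that a reflection of $P$ is simple iff it has exactly one right inversion computed inside $P$, and then identify that inversion set with $\Inv_R(t)\cap P$. You merely fill in the sign-compatibility details (via Proposition~\ref{inv_rootimages} and $\Pi(P)=\Pi\cap P$) that the paper asserts without comment.
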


\begin{proof}
A reflection $t\in P \cap T$ is simple as an element of $P$, if and only if it has only one inversion as an element of $P$.
Since $\Inv_R(t)\cap P$ is the right inversion set of $t$ as an element of $P$, the result follows.
\end{proof}

\begin{prop}
The set $\Delta(P) \subset \Pi(P)$ is the unique simple system of $P$ such that 
the fundamental chamber of $P$ contains that of $W$.
\end{prop}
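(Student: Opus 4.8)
The plan is to prove existence and uniqueness separately, in both cases reading off the answer from the chamber geometry of the reflection arrangement of $P$ sitting inside $V$. Throughout, for a simple system $\Delta'$ of $P$ I write $C_{\Delta'}$ for the associated fundamental chamber $\{x\in V:\langle x,\beta\rangle\ge 0\text{ for all }\beta\in\Delta'\}$, i.e. the dual cone of the positive span of the positive roots of $P$ determined by $\Delta'$; it is the closure of the open chamber $D'=\{x:\langle x,\beta\rangle>0\text{ for all }\beta\in\Delta'\}$. Recall too that $C=\{x\in V:\langle x,r\rangle\ge 0\text{ for all }r\in\Pi\}$ is a full-dimensional convex cone with open interior $C^\circ=\{x:\langle x,r\rangle>0\text{ for all }r\in\Pi\}$.

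Existence is immediate from the definitions. Since $\Delta(P)\subset\Pi(P)=\Pi\cap P\subset\Pi$, the linear inequalities cutting out $C_{\Delta(P)}$ form a subfamily of those cutting out $C$, so $C\subseteq C_{\Delta(P)}$. Hence the fundamental chamber of $P$ attached to $\Delta(P)$ already contains $C$, and it remains only to show no other simple system of $P$ has this property.

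For uniqueness I would argue at the level of open chambers. Because $C^\circ$ meets no reflecting hyperplane $r(t)^\perp$ of $W$, it meets none for $t\in T\cap P$ either; being convex it is connected, hence contained in a single chamber $D_0$ of $P$ (a connected component of $V\setminus\bigcup_{t\in T\cap P}r(t)^\perp$). Since $\Pi(P)\subseteq\Pi$ forces $\langle x,r\rangle>0$ for all $r\in\Pi(P)$ and all $x\in C^\circ$, this chamber $D_0$ is precisely the open fundamental chamber attached to $\Delta(P)$. Conversely, suppose $\Delta'$ is a simple system of $P$ with $C\subseteq C_{\Delta'}=\overline{D'}$, where $D'$ is the corresponding open chamber. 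Then $C^\circ\subseteq\overline{D'}$, and since $C^\circ$ avoids all hyperplanes of $P$ while $\overline{D'}\setminus\bigcup_{t\in T\cap P}r(t)^\perp=D'$, we obtain $C^\circ\subseteq D'$. As chambers of $P$ are pairwise disjoint or equal and $C^\circ$ lies in $D_0\cap D'$, we conclude $D'=D_0$, whence $\Delta'=\Delta(P)$ by the bijection between simple systems of $P$ and chambers of its arrangement.

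The one step needing real care is the passage from the inclusion of closed chambers $C\subseteq\overline{D'}$ to the inclusion of open chambers $C^\circ\subseteq D'$. This rests on two standard facts about the arrangement of $P$ that I would state explicitly: that $C^\circ$ is disjoint from every reflecting hyperplane (a fortiori from those of $P$), and that removing all hyperplanes of $P$ from the closure of a chamber returns exactly that open chamber. Together with the standard bijection between simple systems of $P$ and chambers of the $P$-arrangement (so that distinct $\Delta'$ yield distinct $D'$), these make the argument watertight; everything else is a direct consequence of $\Pi(P)\subseteq\Pi$.
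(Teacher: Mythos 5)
Your proof is correct, and on the half of the statement that the paper actually argues — the containment of chambers — you take the same route: $\Delta(P)\subset\Pi(P)\subset\Pi$ means the inequalities (equivalently, the positive span of roots) defining the chamber of $P$ form a subfamily of those defining the chamber of $W$, and dualizing reverses the inclusion. Where you go beyond the paper is on uniqueness: the published proof consists only of the two-sentence existence argument and leaves uniqueness entirely implicit, whereas you supply it in full via the chamber geometry of the arrangement of $P$ — the open chamber $C^\circ$ of $W$ is nonempty, connected, and avoids every reflecting hyperplane of $P$, so it sits inside exactly one open chamber of $P$; any simple system whose closed chamber contains $C$ must therefore determine that same open chamber, and the bijection between simple systems and chambers finishes the argument. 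The one step you rightly flag, passing from $C\subseteq\overline{D'}$ to $C^\circ\subseteq D'$, is handled correctly by the observation that $\overline{D'}$ minus the hyperplanes of $P$ is exactly $D'$. In short: same existence argument, plus a genuine uniqueness proof that the paper omits; your version is the more complete one.
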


\begin{proof}
Since $\Pi(P) \subset \Pi$, the positive span of $\Pi(P)$ is included in that of $\Pi$.
Taking the dual cone reverses inclusion, so the fundamental chamber of $P$ contains that of $W$.
\end{proof}

\subsubsection{The Bruhat graph}
The Bruhat graph on the vertex set $W$
is defined by putting an oriented arrow $w\to v$ if $vw^{-1}\in T$ and $\ell(v)<\ell(w)$. The unoriented underlying graph  is the Cayley graph of $W$ with the reflections as generating set.

\begin{prop}[\cite{dyer}]\label{dyerprop} 
Let $(W,S)$ be a Coxeter system and $P$ a parabolic subgroup, then  the restriction of the Bruhat graph to  $P$ is the Bruhat graph of $P$ for its canonical generators.
\end{prop}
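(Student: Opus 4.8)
The plan is to match the two graphs edge by edge, treating first the unoriented structure and then the orientations. Since $P$ is itself a reflection group with reflection set $T\cap P$ (as recalled above), the unoriented part is immediate: for $u,v\in P$ one has $vu^{-1}\in T$ if and only if $vu^{-1}\in T\cap P$, because $vu^{-1}$ already lies in $P$. Hence both graphs have exactly the same edges, namely the pairs of elements of $P$ differing by a reflection of $P$, and only the orientation needs to be examined.

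For the orientation I would fix such an edge, given by $u,v\in P$ with $t:=vu^{-1}\in T\cap P$, so that $v=tu$. Writing $\ell$ for the length in $W$ and $\ell_P$ for the length in $P$, the $W$-Bruhat graph orients the edge $u\to v$ precisely when $\ell(v)<\ell(u)$, that is, when $t$ is a left inversion of $u$ in $W$; by Proposition~\ref{inv_rootimages} this holds if and only if $u^{-1}\big(r(t)\big)\in-\Pi$. Applying the same proposition inside $P$, the $P$-Bruhat graph orients $u\to v$ exactly when $t$ is a left inversion of $u$ as an element of $P$, i.e.\ when $u^{-1}\big(r_P(t)\big)\in-\Pi(P)$, where $r_P$ and $\Pi(P)$ denote the root map and the positive system of $P$. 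Because $\Pi(P)\subset\Pi$, the positive root of $t$ in $P$ is its positive root in $W$, so $r_P(t)=r(t)$ and both criteria concern the single vector $u^{-1}\big(r(t)\big)$.

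It then remains to show that $u^{-1}\big(r(t)\big)\in-\Pi(P)$ if and only if $u^{-1}\big(r(t)\big)\in-\Pi$, and here the compatibility of the two positive systems finishes the proof. The vector $u^{-1}\big(r(t)\big)$ is, up to sign, the root attached to the reflection $u^{-1}tu$, which lies in $P$ since $u,t\in P$; thus $u^{-1}\big(r(t)\big)\in\Pi(P)\cup\big(-\Pi(P)\big)$. Now $\Pi(P)\subset\Pi$ gives $-\Pi(P)\subset-\Pi$ together with $\Pi(P)\cap(-\Pi)=\emptyset$, so a root of $P$ is negative in $P$ exactly when it is negative in $W$. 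The two orientation conditions are therefore equivalent, the arrows coincide, and the restriction of the $W$-Bruhat graph to $P$ is the Bruhat graph of $P$.

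The one genuine subtlety is that $P$ is an arbitrary parabolic subgroup, possibly non-standard, so one cannot simply quote the classical agreement of Bruhat orders for \emph{standard} parabolic subgroups; the argument above sidesteps this by reasoning entirely with root images, and the single fact carrying the whole proof is the inclusion $\Pi(P)\subset\Pi$ of positive systems established earlier (equivalently, that the fundamental chamber of $P$ contains that of $W$).
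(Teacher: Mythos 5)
Your argument is correct. Note that the paper itself offers no proof of this statement: it is quoted from Dyer's article, where it is established for an arbitrary Coxeter system and an arbitrary reflection subgroup, with the canonical generators defined combinatorially. What you give is a direct, self-contained proof in the setting actually used here (finite real reflection groups and parabolic subgroups), and it is sound: the unoriented edge sets agree because $vu^{-1}\in P$ forces $vu^{-1}\in T\cap P$; the orientations agree because, by Proposition~\ref{inv_rootimages} applied once in $W$ and once in $P$ (which is legitimate, $P$ being a finite reflection group with simple system $\Delta(P)$ and positive system $\Pi(P)$), both reduce to the sign of the single vector $u^{-1}\bigl(r(t)\bigr)$, and this vector is a root of the reflection $u^{-1}tu\in P$, hence lies in $\Pi(P)\cup\bigl(-\Pi(P)\bigr)$, where the inclusion $\Pi(P)\subset\Pi$ makes ``negative for $P$'' and ``negative for $W$'' coincide. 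You correctly identify the one point that needs care, namely that $P$ need not be standard, and your use of the compatibility of positive systems ($\Pi(P)=\Pi\cap P$, equivalently the containment of fundamental chambers established just before the statement) is exactly what handles it. The only trade-off against citing Dyer is generality: your Euclidean root-system argument does not extend verbatim to infinite Coxeter groups or to non-parabolic reflection subgroups, but for the purposes of this paper that generality is not needed.
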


In other words, if $\leq_{B_P}$ is the Bruhat order on $P$ and if $v,w\in P$ and $vw^{-1}$ is a reflection then 
\[
  v\leq_Bw\Longleftrightarrow v\leq_{B_P} w.
\]
This implies in particular that for $v,w\in P$ one has 
\[
 v\leq_{B_P} w\Longrightarrow v \leq_B w.
\]
The converse implication does not hold in general, see (\ref{counter}) below.

\section{Noncrossing partitions}\label{noncrois_sec}
We refer to \cite{armstrong} for the general facts on this subject.
\subsection{Definition of noncrossing partitions}
Let $c$ be a standard Coxeter element, then 
the set of noncrossing partitions associated to $c$, denoted by $ NC(W,c)$, is the set of all $w\in W$ such that $w\leq c$.
In the case where $W$ is the symmetric group $S_{n+1}$ with the Coxeter generators $s_i=(i ,i+1)$ and $c=s_1\ldots s_{n}$ is the cycle $(1,2,3,\ldots, n ,n+1)$ one can associate to any $w\in NC(W,c)$ the partition given by its cycle decomposition. This coincides with  the classical notion of noncrossing partition as  defined by Kreweras, see  \cite{kreweras}, \cite{biane}. In the sequel we denote by $NC_n$ the set of classical noncrossing partitions.

Endowed with the order $\leq$ the set $NC(W,c)$ is a lattice \cite{bradywatt}. Since all Coxeter elements are conjugate, the isomorphism class of this lattice  structure does not depend on $c$.
The  map $w\mapsto \Gamma(w)$ which associates  a parabolic subgroup to a noncrossing partition is injective, moreover one has 
\begin{equation}
v\leq w\Longleftrightarrow \Gamma(v)\subset\Gamma(w)
\end{equation}
(see for example \cite[Section~5.1.3]{armstrong}). 
In particular, the codimension of the fixed subspace, which is also the absolute length (see 
(\ref{absolutelengthfixdim})) gives a rank function on $NC(W,c)$, which is a rank function for the lattice structure.

Parabolic subgroups can be considered as generalized set partitions so that the map $\Gamma$ gives a way to consider noncrossing partitions
as particular set partitions.
In the classical case of noncrossing partitions  of $[1,n]$ for all $i<j<k<l$, putting $t_1=(i,k),\, t_2=(j,l)$  neither product $t_1t_2$ or $t_2t_1$  belongs to $NC_n$ and this property characterizes the crossing of two transpositions with disjoint supports. This leads to the following definition of a property which will play an  important role later.
 
\begin{defi}{(Bessis \cite[Definition~2.1.1]{bessis}).}\label{noncrois}
Two reflections $t_1,t_2$ are called {\it $c$-noncrossing} if either $t_1t_2\leq c$ or $t_2t_1\leq c$. 
\end{defi}

\begin{rema}\label{onlycommutations} \ 
\begin{enumerate}
 \item Bessis uses the notation $\mathrel{\|_c}$ to denote this relation, however this conflicts with the use of the same notation by Reading \cite{reading} to denote another relation. We will use Reading's notation later on (see Definition~\ref{defclusterrel}) so we will just say that  two reflections are $c$-noncrossing when needed, without using a specific notation for this relation.
\item Observe that if two reflections $t_1,t_2$ do not commute then one cannot have both $t_1t_2\leq c$ and $t_2t_1\leq c$ (this follows from the fact that $\Gamma(t_1t_2)=\Gamma(t_2t_1)=\langle t_1,t_2\rangle$ and the injectivity of $w\mapsto \Gamma(w) $ on $NC(W,c)$).
\end{enumerate}
\end{rema}

One can define noncrossing partitions in the same way when $c$ is a (general) Coxeter element, i.e.~is conjugated to some standard Coxeter element.  Then any noncrossing partition $w\leq c$ is a Coxeter element of a parabolic subgroup, see \cite[Lemma~1.4.3]{bessis}.  However our results crucially depend on properties of standard Coxeter elements, therefore in the following we will only consider such Coxeter elements. 

\begin{prop}\label{reading1prodsimples}
 Let $w \in  NC(W,c)$, then there exists an indexing ${\mathfrak s}_1,\dots,{\mathfrak s}_k$ of the 
 simple generators of $\Gamma(w)$ such that
 \[
    w = {\mathfrak s}_1 \cdots {\mathfrak s}_k.
 \]
\end{prop}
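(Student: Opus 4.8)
The plan is to prove the sharper statement that $w$ is a \emph{standard} Coxeter element of the parabolic subgroup $P:=\Gamma(w)$ with respect to its canonical simple system $S(P)$; the proposition is then just the definition of such an element. I would start with length bookkeeping. Since $\Fix(P)=\Fix(w)$, equation~\eqref{absolutelengthfixdim} gives
\[
  \ell_T(w)=n-\dim\Fix(w)=\dim\big(\Fix(P)^{\perp}\big)=\rk(P)=|S(P)|=:k,
\]
so it suffices to write $w$ as a product of exactly the $k$ distinct reflections of $S(P)$, each used once.

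The main input is \cite[Lemma~1.4.3]{bessis}, which tells us that $w$ is a Coxeter element of $P$, i.e.\ the product in some order of the $k$ reflections attached to \emph{some} simple system of $P$. The whole content of the proposition is that this simple system may be taken to be the canonical $\Delta(P)$ — equivalently, that $w$ is standard, not merely conjugate in $P$ to a standard Coxeter element. This forces one to use that $c$ is a \emph{standard} Coxeter element of $W$: in rank $\geq 3$ a given Coxeter element of $P$ is standard for only some of the simple systems of $P$, and one checks on small examples that peeling off an arbitrary simple reflection $\mathfrak s\in S(P)$ with $\mathfrak s\le w$ can produce a factorization $w=\mathfrak s\cdot(\mathfrak s w)$ that is not into simple reflections of $P$. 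So a naive induction ``remove any simple reflection'' does not work.

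The route I would take is to induct on the rank of $W$ using the recursion $c=s\,c'$, where $s\in S$ is an initial letter and $c'=sc$ is a standard Coxeter element of $W_{\langle s\rangle}$. If $\supp(w)\neq S$, then $w\in W_J$ for $J=\supp(w)\subsetneq S$; here $\Gamma(w)\subseteq W_J$, and because the fundamental chamber of $W$ is contained in that of $W_J$ the chamber characterization of $\Delta(\Gamma(w))$ yields the same canonical simple system whether $\Gamma(w)$ is read inside $W$ or inside $W_J$, so (invoking compatibility of noncrossing partitions with standard parabolic subgroups, $w$ remaining below the induced standard Coxeter element of $W_J$) the inductive hypothesis applies in the smaller group. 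The delicate case is $\supp(w)=S$: when $\ell_T(w)=n$ one has $\Fix(w)=0$, hence $P=W$ and $w$ is the unique maximal element of $NC(W,c)$, namely $c$, which is standard; otherwise one must peel off the \emph{correct} simple reflection. This last point is the main obstacle: all simple reflections of $P$ satisfy $\mathfrak s\le w$ (from $\ell_T(\mathfrak s w)\leq k$ and $\ell_T(\mathfrak s w)\equiv k-1 \pmod 2$, whence $\ell_T(\mathfrak s w)=k-1$), but one must single out the one for which $\Gamma(\mathfrak s w)=\langle S(P)\setminus\{\mathfrak s\}\rangle$, i.e.\ match the geometric simple system $\Delta(P)$ against the combinatorial data of the reduced word of $c$. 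I expect this to come from Reading's theory of $c$-sortable elements~\cite{reading}, combined with the characterization of the simple reflections of $P$ as those $t\in T\cap P$ with $\Inv_R(t)\cap P=\{t\}$ and with Proposition~\ref{dyerprop}.
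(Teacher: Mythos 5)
Your reduction is sound as far as it goes: the length bookkeeping, the observation that every simple generator $\mathfrak s$ of $P=\Gamma(w)$ satisfies $\mathfrak s\le w$ (the parity argument you sketch is correct), the case $\ell_T(w)=n$, and the reduction of the non-full-support case to a proper standard parabolic $W_J$ with $J=\supp(w)$. (Even there, note that you silently use that $w$ lies below the subword Coxeter element $c_J$ of $W_J$, i.e.\ that $NC(W,c)\cap W_J=NC(W_J,c_J)$; this is true but is itself a nontrivial fact of roughly the same depth as the proposition.) The real problem is that the proof stops exactly at the point you yourself label ``the main obstacle'': for full-support $w$ with $0<\ell_T(w)<n$ you must exhibit a \emph{specific} $\mathfrak s\in S(P)$ whose removal keeps the induction alive, and for this you offer no argument, only the expectation that it ``comes from Reading's theory of $c$-sortable elements''. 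That is a pointer to a toolbox, not a proof, and it is precisely where all the content of the statement lies: as you correctly observe, an arbitrary $\mathfrak s\in S(P)$ with $\mathfrak s\le w$ will not do.

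For comparison, the paper does not run your induction at all: it quotes two specific results of Reading \cite{reading} --- Theorem~6.1, which says that each $w\in NC(W,c)$ is the product, in a suitable order, of the cover reflections of the $c$-sortable element attached to $w$, and Lemma~3.1, which says that the cover reflections of a sortable element are the canonical simple generators of a parabolic subgroup --- and these two statements together give the proposition immediately. It also sketches a second route closer in spirit to your induction: in the bipartite case, \cite[Proposition~5.1]{bradywatt2} is exactly the device that produces a valid last factor $\mathfrak s_k$, and the general standard Coxeter element is then reached by the moves $c\to scs$ of Lemma~\ref{scs_moves}. That Brady--Watt input (or an equally precise substitute) is the ingredient your outline is missing; without it the full-support case, and hence the proposition, is not established.
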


\begin{proof}
 This follows from results of Reading~\cite{reading}.
 More precisely, Theorem~6.1 from \cite{reading} shows that
 $w$ is the product, in some order, of the so-called cover reflections of 
 a Coxeter-sortable element. By Lemma~3.1 of the same reference 
 these cover reflections are the simple generators of a parabolic subgroup. 
 
 Alternatively, this follows from the results of Brady and Watt~\cite{bradywatt2}. In the case of a bipartite Coxeter element, \cite[Proposition~5.1]{bradywatt2} gives a way to compute a valid ${\mathfrak s}_k$, reducing the problem to finding ${\mathfrak s}_1, \dots, {\mathfrak s}_{k-1}$, which can be done inductively. Using Lemma~\ref{scs_moves}, it remains only to see how the result is transferred from $c$ to $scs$ where $s$ is a left descent of $c$.  Let $w = {\mathfrak s}_1 \cdots {\mathfrak s}_k$ be the factorization of $w \in NC(W,c)$ as a product of simple generators.  If $s\notin\{ {\mathfrak s}_1 , \dots , {\mathfrak s}_k \}$, we have $sws = (s{\mathfrak s}_1s) \cdots (s{\mathfrak s}_ks) \leq scs$ and the factors form a simple system, otherwise we can assume $s={\mathfrak s}_1$ and we have $sws = {\mathfrak s}_2 \cdots {\mathfrak s}_k {\mathfrak s}_1 \leq scs$.  
\end{proof}

In other words, each $w\in  NC(W,c)$ is a standard Coxeter element of its own parabolic subgroup $\Gamma(w)$, considered as a reflection group.
Observe that this property actually depends only on $w$ and not on the standard Coxeter element $c$ such that $w\leq c$.

  From $ii)$ of Remark \ref{onlycommutations} we deduce
\begin{prop}\label{commCoxeter}

Let $c=t_1\ldots t_n=t'_1\ldots t'_n$ be two minimal factorizations of $c$ into reflections, where $t'_1,\ldots,t'_n$ is a permutation of $t_1,\ldots ,t_n$, then one can pass from one factorization to the other by a succession of transpositions of neighbourhing commuting reflections $t_it_j\to t_jt_i$.
\end{prop}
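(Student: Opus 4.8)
The plan is to reformulate the statement in terms of linear extensions of a suitable partial order on the underlying set of reflections, the only substantial input being part $ii)$ of Remark~\ref{onlycommutations}.

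First I would record two consequences of Lemma~\ref{elemlemm}. Writing $c=t_1\cdots t_n$, the subsequence property of that lemma shows that for any $1\le i<j\le n$ the product $t_it_j$ is again a minimal factorization, so $t_it_j\le c$; applied to a repeated pair it would give $t_it_j=e$ as a minimal factorization of length $2$, i.e. $\ell_T(e)=2$, which is absurd, so the reflections $t_1,\dots,t_n$ are pairwise distinct. Hence the hypothesis that $t'_1,\dots,t'_n$ is a permutation of $t_1,\dots,t_n$ means the two factorizations have the same underlying set $R=\{t_1,\dots,t_n\}$, and each of them is a linear ordering of $R$ whose left-to-right product equals $c$. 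I will write $F=(t_1,\dots,t_n)$ and $F'=(t'_1,\dots,t'_n)$.

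The heart of the argument is the following claim: if $a,b\in R$ do not commute, then $a$ and $b$ occur in the same relative order in $F$ and in $F'$. To see this, suppose $a$ precedes $b$ in $F$; then the subsequence $(a,b)$ of $F$ is, by Lemma~\ref{elemlemm}, a minimal factorization, so $ab\le c$. If $b$ preceded $a$ in $F'$, the same reasoning applied to $F'$ would give $ba\le c$, and then both $ab\le c$ and $ba\le c$ would hold for the non-commuting pair $a,b$, contradicting part $ii)$ of Remark~\ref{onlycommutations}. This is exactly the place where the hypothesis is used, and I expect it to be the only subtle point: one must notice that a minimal factorization may be restricted to an \emph{arbitrary} subsequence (unlike a reduced word), so that the comparison $ab\le c$ versus $ba\le c$ becomes available directly, without having to control the reflections lying between $a$ and $b$.

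Granting the claim, I would define a partial order $P$ on $R$ as the transitive closure of the relations $a\prec b$ declared whenever $a,b$ do not commute and $a$ precedes $b$ in $F$; this is acyclic since it refines the total order given by $F$. By the claim the identical relation is obtained from $F'$, so both $F$ and $F'$ are linear extensions of $P$, and moreover any two elements of $R$ that are incomparable in $P$ necessarily commute (a non-commuting pair is, by construction, comparable). It then remains to invoke the standard fact that any two linear extensions of a finite poset are connected by transpositions of adjacent incomparable elements: given two distinct linear extensions, the leftmost pair that is adjacent in one and inverted in the other must be incomparable in $P$, since a comparable pair would be forced into the same order in both extensions; swapping it strictly decreases the number of inversions, and one concludes by induction. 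Each such transposition swaps two neighbouring commuting reflections and hence leaves the product unchanged, so, starting from $F$ which evaluates to $c$, every intermediate ordering is again a minimal factorization of $c$. The resulting sequence of moves transforms $F$ into $F'$ by transpositions $t_it_j\to t_jt_i$ of neighbouring commuting reflections, as required.
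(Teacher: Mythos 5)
Your proof is correct, and it rests on exactly the same two inputs as the paper's own argument: every pair extracted from a minimal factorization is again a minimal factorization with product $\le c$ (Lemma~\ref{elemlemm}), and a non-commuting pair $a,b$ cannot satisfy both $ab\le c$ and $ba\le c$ (part $ii)$ of Remark~\ref{onlycommutations}), whence non-commuting letters must appear in the same relative order in both words. The difference is purely in the packaging. The paper uses this observation locally, in an induction on $n$: it locates $t_1=t'_j$ in the second word, notes that $t_1$ commutes with each $t'_i$, $i<j$ (because $t_1t'_i\le c$ from the first word and $t'_it_1\le c$ from the second), slides $t_1$ to the front by commuting swaps, and recurses on the Coxeter element $t_1c$ of $\Gamma(t_1c)$. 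You globalize the same observation into a precedence poset on the set of letters and invoke the standard connectivity of linear extensions under adjacent transpositions of incomparable (hence here commuting) elements. The two routes are essentially equivalent in content, but yours makes the underlying ``heap'' structure explicit and, as a bonus, records the distinctness of the $t_i$ (which the paper leaves implicit) as another consequence of Lemma~\ref{elemlemm}; the paper's version is shorter because the induction absorbs the linear-extension bookkeeping.
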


\begin{proof} By induction on $n$, the length of $c$. If $t_1=t'_1$ then use the induction hypothesis for the Coxeter element $t_1c\in \Gamma(t_1c)$. If $t_1=t'_j$ with $j>1$ then $t_1t'_i\leq c$ and $t'_it_1\leq c$ for all $i<j$ therefore $t_1$ commutes with all $t'_i$ with $i<j$ and we can move it to the left to reduce to the preceding case.
\end{proof}
\subsection{Generalized Catalan and Narayana numbers} 
The number of elements of $ NC(W,c)$ is the generalized Catalan number $\Cat(W)$.  If $(W,S)$ is an irreducible system one has 
\[
 \Cat(W)=\prod_{i=1}^n\frac{h+e_i+1}{e_i+1}
\]
where the $e_i$ are the exponents of $W$ and $h$ is the Coxeter number, i.e.~the order of $c$ as a group element.  Analogously the number of elements of $ NC(W,c)$ of rank $k$ is the generalized Narayana number $\Nar_k(W)$, see e.g.~\cite{armstrong}.

Let $ NCF(W,c) \subset NC(W,c)$ denote the set of noncrossing partitions with full support.
Its cardinality is the {\it positive Catalan number} $\Cat^+(W)$, given by
\[
 \Cat^+(W)=\prod_{i=1}^n\frac{h+e_i-1}{e_i+1}.
\]
The numbers $\Cat(W)$ and $\Cat^+(W)$ are related by inclusion-exclusion.

The {\it positive Narayana number} $\Nar^+_k(W)$ is the number of noncrossing partitions of rank $k$ with full support. 
These numbers are related with Narayana numbers by inclusion-exclusion.

Define  the {\it Fuß-Catalan numbers} $\Cat^{(k)}(W)$ as
\begin{equation} \label{fusscat}
   \Cat^{(k)}(W) = \prod_{i=1}^n \frac{kh+e_i+1}{e_i+1}.
\end{equation}
This is the number of chains $w_1\leq \dots \leq w_k$ in $ NC(W,q)$, see \cite{chapoton1}.  Note that this is a polynomial in $k$.  It can be seen as a rescaling of the polynomial
\[
   \sum_{w\in W} t^{\ell_T(w)} = \prod_{i=1}^n (1+te_i).
\]

\subsection{The Kreweras complement}
For any $w\in NC(W,c)$ one defines its Kreweras complement as $K(w)=w^{-1}c$. The map $K$ is bijective, and defines an anti-automorphism of the lattice structure on $NC(W,c)$. The map $K$ is not involutive, rather $K^2(w)=c^{-1}wc$ is an automorphism of $NC(W,c),\leq)$. 
The  inverse anti-automorphism of $K$ is $K^{-1}(w)= c w^{-1} $. 
We sometimes denote $K_c$ to mark the dependence on $c$.

The Kreweras complement has a remarkable compatibility with respect to the Bruhat order, which will play a crucial role in this paper.

\begin{prop} \label{krew1}\

\begin{enumerate}
\item Let $v,w \in NC(W,c)$ with $v\lessdot w$, then we have:
\[
  v >_B w \Longrightarrow K(v) >_B K(w).
\]
\item The converse result holds under a supplementary hypothesis:
      let $v,w \in NC(W,c)$ with $v\lessdot w$, and suppose $v$ has full support in $ NC(W,c)$, then we have:
\[
  K(v) >_B K(w) \Longrightarrow v >_B w.
\]
\end{enumerate}
\end{prop}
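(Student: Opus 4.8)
The plan is to work directly with the reflection-by-reflection description of the Kreweras complement and to translate the Bruhat comparisons into statements about inversions via Proposition~\ref{inv_rootimages}. Since $v \lessdot w$ in the absolute order means $wv^{-1} \in T$, write $t = wv^{-1}$, so that $w = tv$. The hypothesis $v >_B w$ versus $w >_B v$ is then a single comparison: since $wv^{-1}=t$ is a reflection, exactly one of $v <_B w$ or $w <_B v$ holds, and $v >_B w$ says precisely that $t$ is a left inversion of $v$, i.e.\ $\ell(tv) < \ell(v)$. On the Kreweras side, $K(v) = v^{-1}c$ and $K(w) = w^{-1}c = v^{-1}t c$, so $K(v) K(w)^{-1} = v^{-1}c\,(v^{-1}tc)^{-1} = v^{-1} c c^{-1} t^{-1} v = v^{-1} t v$, which is again a reflection (call it $t' = v^{-1}tv$). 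Thus $K(v)$ and $K(w)$ also differ by a single reflection, and the claim $K(v) >_B K(w)$ becomes the statement that $t'$ is a left inversion of $K(v) = v^{-1}c$.

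The first step, then, is to reduce both (i) and (ii) to the following concrete equivalence/implication about inversions: $t$ is a left inversion of $v$ if and only if (respectively, only if, under the full-support hypothesis) $t' = v^{-1}tv$ is a left inversion of $v^{-1}c$. By Proposition~\ref{inv_rootimages} this is equivalent to comparing the sign of $(v^{-1}c)^{-1}\big(r(t')\big) = c^{-1}v\big(r(v^{-1}tv)\big)$ against the sign of $v^{-1}\big(r(t)\big)$. The second step is to compute $v\big(r(v^{-1}tv)\big)$: since $v^{-1}tv$ is the conjugate reflection, its positive root is (up to sign) $v^{-1}\big(r(t)\big)$, and applying $v$ brings us back to $\pm r(t)$, with the sign governed again by Proposition~\ref{inv_rootimages} according to whether $t$ is an inversion of $v$. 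The computation should collapse the whole question to tracking how $c^{-1}$ acts on $r(t)$, i.e.\ to whether $c^{-1}\big(r(t)\big)$ — equivalently $r(t)$ relative to the action of the Coxeter element — is positive or negative.

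This is where I expect the real content, and the main obstacle, to lie: one needs a clean statement describing, for a reflection $t$ with $t \leq c$, the sign of $c^{-1}\big(r(t)\big)$, or more precisely how this sign correlates with $t$ being a left inversion of $v$ for noncrossing $v$ with $t \leq w \leq c$. The natural tool is the well-known fact that for a standard Coxeter element $c$ the action of $c$ on the roots orders the inversions along a reduced word, and the positive roots of $\Gamma(w)$ sit inside $\Pi$; combined with Proposition~\ref{reading1prodsimples}, which says $w$ is a Coxeter element of $\Gamma(w)$ built from the simple generators $\mathfrak{s}_1,\dots,\mathfrak{s}_k$, one can analyze the sign using the factorization $w = \mathfrak{s}_1\cdots\mathfrak{s}_k$ and the geometry of the fundamental chamber of $\Gamma(w)$ inside that of $W$ (the Proposition stating $\Delta(P)\subset\Pi$ with the chamber containment is exactly the geometric input).

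For part (ii), the role of the full-support hypothesis must enter precisely at the point where the forward implication of (i) cannot be reversed. I anticipate that without full support there is a root $r(t)$ whose sign under $c^{-1}$ is not controlled by the parabolic subgroup $\Gamma(v)$ alone, so the equivalence breaks; requiring $v$ to have full support forces $\Gamma(v) = W$ (or at least that all simple directions are ``seen'' by $v$), which pins down the missing sign and upgrades the implication to an equivalence. Concretely, I would prove (i) by the sign computation above and then, for (ii), assume $K(v) >_B K(w)$ and $v <_B w$ for contradiction, and use full support to derive an inconsistency in the sign of $c^{-1}\big(r(t)\big)$. The cleanest route is likely to identify the exact condition on the sign that distinguishes the two cases and show full support removes the ambiguous case; I would keep an eye out for an inductive argument on $\ell_T(c)$ or on $\ell(v)$, using Lemma~\ref{scs_moves} to move a descent of $c$ and reduce to a smaller parabolic, as in the proof of Proposition~\ref{reading1prodsimples}.
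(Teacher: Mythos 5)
Your reduction is correct and matches the first half of the paper's argument: writing $t=wv^{-1}$, noting $K(v)K(w)^{-1}=v^{-1}tv$, and using Proposition~\ref{inv_rootimages} to turn both Bruhat comparisons into signs of root images is exactly the right move. If you push your computation through, you find $r(v^{-1}tv)=-v^{-1}(r(t))$ when $t$ is a left inversion of $v$, hence
\[
(v^{-1}c)^{-1}\bigl(r(v^{-1}tv)\bigr)=-c^{-1}\bigl(r(t)\bigr),
\]
so everything collapses to the single question you correctly isolate: is $c^{-1}(r(t))$ positive or negative, i.e.\ does $tc>_Bc$ or $tc<_Bc$ hold?

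The genuine gap is that you never answer this question; you only say where you ``expect the real content to lie.'' The statement you are missing is exactly the paper's Lemma~\ref{krewlemm}: if $tw<w$ in absolute order and $tc<_Bc$, then $tw<_Bw$ (equivalently, by contraposition, $v=tw>_Bw$ forces $tc>_Bc$, which is what part~$i)$ needs). The paper's proof of this is not the root-ordering or chamber-geometry argument you sketch, but a support argument: $tc<_Bc$ means $tc$ is a length-$(n-1)$ subword of a reduced word for $c$, hence $tc\in W_{\langle s\rangle}$ for some $s\in S$; then $t\notin W_{\langle s\rangle}$ while $tw\leq tc$ (Proposition~\ref{propelem}) gives $tw\in W_{\langle s\rangle}$, so $s\in\supp(w)\setminus\supp(tw)$ and monotonicity of support under $\leq_B$ forces $tw<_Bw$. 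Your proposed tools (the inversion order of a reduced word for $c$, chamber containment for $\Gamma(w)$, induction via Lemma~\ref{scs_moves}) do not obviously produce this, and without it neither implication is established. For part~$ii)$ the same lemma is the point where full support enters, and in a more specific way than your ``pins down the missing sign'' guess: if $tc<_Bc$ then $v\leq tw\leq tc\in W_{\langle s\rangle}$ would contradict full support of $v$, so $tc>_Bc$ holds unconditionally on the Bruhat comparison of $v$ and $w$, and your sign computation then runs backwards. So the architecture of your proof is sound and coincides with the paper's, but the load-bearing lemma is missing.
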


Before the proof we need a lemma.

\begin{lemm}\label{krewlemm}
Let $w\in NC(W,c)$ and let $t$ be a reflection such that $tw<w$ and $tc<_Bc$, then $tw<_Bw$.
The same holds with right multiplication: if $wt<w$ and $ct<_Bc$, then $wt<_Bw$.
\end{lemm}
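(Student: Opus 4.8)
The plan is to pass to root language via Proposition~\ref{inv_rootimages} and then prove the statement in contrapositive form by climbing from $w$ up to $c$ inside $NC(W,c)$. I would treat the left‑multiplication assertion; the right one then follows by applying it to the triple $(w^{-1},c^{-1},t)$, using that $c^{-1}$ is again a standard Coxeter element, that $\leq$ and the absolute order are invariant under inversion (Proposition~\ref{propelem}), and that $g\mapsto g^{-1}$ preserves $\leq_B$. Write $\alpha=r(t)$. The hypothesis $tw<w$ says exactly $t\leq w$, i.e. $t\in T\cap\Gamma(w)$ and $\alpha\in\Fix(w)^{\perp}$; by Proposition~\ref{inv_rootimages} the hypothesis $tc<_Bc$ reads $c^{-1}(\alpha)\in(-\Pi)$, and the desired conclusion $tw<_Bw$ reads $w^{-1}(\alpha)\in(-\Pi)$. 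So it suffices to establish the implication ``$w^{-1}(\alpha)\in\Pi\Longrightarrow c^{-1}(\alpha)\in\Pi$''.

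Set $u=K(w)=w^{-1}c$, so that $c=wu$ is a minimal factorization and $c^{-1}(\alpha)=u^{-1}\big(w^{-1}(\alpha)\big)$. I would induct on $\ell_T(u)=n-\ell_T(w)$, the base case $u=e$ (that is, $w=c$) being trivial. For the inductive step put $\beta=w^{-1}(\alpha)$ and assume $\beta\in\Pi$. It is enough to produce one reflection with $w\lessdot wq\leq c$ for which $(wq)^{-1}(\alpha)\in\Pi$: indeed, setting $w'=wq$ we then have $t\leq w\leq w'$, $(w')^{-1}(\alpha)\in\Pi$ and $\ell_T(K(w'))=\ell_T(u)-1$, so the induction hypothesis applied to $w'$ yields $c^{-1}(\alpha)\in\Pi$. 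Now the reflections $q$ with $w\lessdot wq\leq c$ are precisely the reflections $q\leq u$, and among these I may take any simple generator $\mathfrak r$ of the parabolic subgroup $\Gamma(u)$: since $u$ is a standard Coxeter element of $\Gamma(u)$ (Proposition~\ref{reading1prodsimples}), each such $\mathfrak r$ satisfies $\mathfrak r\leq u$. Writing $\delta=r(\mathfrak r)\in\Pi$, one computes $(w\mathfrak r)^{-1}(\alpha)=\mathfrak r(\beta)=\beta-\tfrac{2\langle\beta,\delta\rangle}{\langle\delta,\delta\rangle}\,\delta$. If $\langle\beta,\delta\rangle\leq 0$ this is a nonnegative combination of the two positive roots $\beta$ and $\delta$, hence lies in the positive cone; being a root, it is a positive root, which closes the step.

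Thus the entire argument reduces to the following sign control, which I expect to be the main obstacle: I must guarantee that $\Gamma(u)$ possesses a simple generator $\mathfrak r$ with $\langle\beta,r(\mathfrak r)\rangle\leq 0$, for $\beta=w^{-1}(\alpha)$ the positive root at hand. This is the one point where the geometry of the minimal factorization $c=w\cdot K(w)$ must be exploited — namely that $\Fix(w)^{\perp}$ and $\Fix(u)^{\perp}$ are complementary and that both $w$ and $u$ are standard Coxeter elements of their parabolic subgroups with simple systems compatible with the fundamental chamber of $W$ — and it is presumably here that the restriction to standard Coxeter elements is essential. Concretely I would argue by contradiction: if $\langle\beta,r(\mathfrak r)\rangle>0$ for every simple generator $\mathfrak r$ of $\Gamma(u)$, then the orthogonal projection of $\beta$ onto $\Fix(u)^{\perp}$ would be strictly dominant for $\Gamma(u)$, and I would derive a contradiction with the standing positivity $\beta\in\Pi$ together with $\alpha=w(\beta)\in\Pi\cap\Fix(w)^{\perp}$.

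Finally, it is worth noting an equivalent packaging of the conclusion that may streamline this crux. Since $t,w\in\Gamma(w)$ differ by the reflection $t$, Dyer's Proposition~\ref{dyerprop} identifies the relation $tw<_Bw$ in $W$ with the corresponding relation in $\Gamma(w)$, where $w$ is a standard Coxeter element; the whole statement then becomes the inclusion $\Inv_L(c)\cap\big(T\cap\Gamma(w)\big)\subseteq\Inv_L(w)$, i.e. a left inversion of $c$ that happens to be a reflection of $\Gamma(w)$ is already a left inversion of the Coxeter element $w$ of $\Gamma(w)$. Proving this inclusion directly, via the explicit description of the inversions of $w$ as the reflections $r^{-1}\!\big(\mathfrak s_1\cdots\mathfrak s_{j-1}(r(\mathfrak s_j))\big)$ attached to a factorization $w=\mathfrak s_1\cdots\mathfrak s_k$ into simple generators of $\Gamma(w)$, is an alternative route to the same obstacle.
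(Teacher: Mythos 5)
Your reduction to the contrapositive statement ``$w^{-1}(\alpha)\in\Pi\Rightarrow c^{-1}(\alpha)\in\Pi$'' and the transfer of the right-multiplication case via $g\mapsto g^{-1}$ are both fine, but the induction has a genuine gap at exactly the point you flag as the main obstacle, and unfortunately that obstacle cannot be overcome in the form you state it. Your step requires a simple generator $\mathfrak r$ of $\Gamma(u)$, $u=K(w)$, with $\langle \beta \mid r(\mathfrak r)\rangle\leq 0$ for $\beta=w^{-1}(\alpha)$; this existence claim is false. Take $W=S_4$, $c=(1,2,3,4)$, $w=(1,3,4)\in NC(W,c)$ and $t=(3,4)$, so $tw=(1,4)\lessdot w$ and $\beta=w^{-1}(e_3-e_4)=e_1-e_3\in\Pi$ (the contrapositive hypothesis holds). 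Here $u=K(w)=(1,2)$, so $\Gamma(u)$ has the single simple generator $(1,2)$ with root $e_1-e_2$, and $\langle e_1-e_3\mid e_1-e_2\rangle>0$. The induction still ``wants'' to succeed --- $(1,2)(e_1-e_3)=e_2-e_3$ is again positive --- but your sufficient criterion (nonnegativity of the coefficients in $\mathfrak r(\beta)=\beta-2\tfrac{\langle\beta,\delta\rangle}{\langle\delta,\delta\rangle}\delta$) is not available, and no argument is given for why $\mathfrak r(\beta)$ should remain positive in general. The concluding sketch (``strict dominance of the projection of $\beta$ onto $\Fix(u)^\perp$ yields a contradiction'') is therefore aimed at proving a false statement and cannot be completed.

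For comparison, the paper's proof avoids root geometry entirely and is a short support argument: since $tc<_Bc$ and a reduced word for $c$ contains each simple reflection exactly once, $tc$ lies in a proper standard parabolic $W_{\langle s\rangle}$; then $t\notin W_{\langle s\rangle}$ (else $c=t\cdot tc\in W_{\langle s\rangle}$), while $tw\leq tc$ by Proposition~\ref{propelem} forces $tw\in W_{\langle s\rangle}$, so $s\in\supp(w)\setminus\supp(tw)$ and $w$ cannot be a subword of a reduced expression of $tw$, whence $tw<_Bw$. If you want to salvage a root-theoretic induction along the Kreweras complement, you would need a criterion for $\mathfrak r(\beta)\in\Pi$ that does not reduce to the sign of $\langle\beta\mid r(\mathfrak r)\rangle$; as the example shows, that sign is the wrong invariant.
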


\begin{proof} 
Since $tc<_Bc$ it follows that $tc$ is a subword of length $n-1$ of a reduced expression for $c$, therefore $tc\in W_{\langle s\rangle}$ for some $s\in S$.

Next, we have $t \notin W_{\langle s\rangle}$.  Indeed $s$ is in the support of $t$, otherwise the subgroup $W_{\langle s\rangle}$ would contain both $t$ and $tc$, hence their product $c$, but clearly $c\notin W_{\langle s\rangle}$. 

As $t<w\leq c$, by Proposition~\ref{propelem} one has  $tw\leq tc$, therefore $tw\in W_{\langle s\rangle}$.  One has 
$w=t(tw)$ with $t \notin W_{\langle s\rangle}$ and $tw\in W_{\langle s\rangle}$, therefore $w\notin W_{\langle s\rangle}$.  

So $s$ belongs to the support of $w$ but not that of $tw$. It follows that $w$ cannot be obtained as a reduced subword of $tw$.  Knowing that either $tw <_B w$ or $w <_B tw$ holds, we get $tw <_B w$ as announced.

The case of right multiplication is analogous.
\end{proof}

\begin{proof}[Proof of Proposition~\ref{krew1}]

We first prove $i)$.  Suppose that $v$ and $w$ are such that $v \lessdot w$, and $v >_B w$.  Let $t\in T$ be the reflection such that $v=tw$.  

Note that $tw >_B w$ implies $w^{-1}t >_B w^{-1}$, which means that $t$ is not a right inversion of $w^{-1}$. So $w^{-1}(r(t)) \in \Pi $ by Proposition~\ref{inv_rootimages}.

By Lemma~\ref{krewlemm} and the assumptions on $v$ and $w$, we have $tc>_Bc$.  Consequently, we have $c^{-1}t>_B c^{-1}$, which means that $t$ is not a right inversion of $c^{-1}$, so $r(c^{-1}tc)=c^{-1}(r(t))$ by Proposition~\ref{inv_rootimages}.  Applying $K(w)=w^{-1}c$ on both sides, we get $K(w)(r(c^{-1}tc))=w^{-1}(r(t))$.  But as showned above, $w^{-1}(r(t)) \in \Pi $, and it follows that $c^{-1}tc$ is not a right inversion for $K(w)$.

We thus get $K(w)c^{-1}tc>_B K(w)$. Since $v=tw$, after simplification this gives $K(v) >_B K(w)$.

Now we prove $ii)$. Let $v=tw<w\leq c$ for a reflection $t$. If $tc<_Bc$ then $tc\in W_{\langle s\rangle}$ for some $s\in S$ and $tc<c$ therefore $v<tc\in W_{\langle s\rangle} $. We deduce from this  that, if  $v$ has full support, then $tc>_Bc$.  If $K(w)<_BK(v)$ then, again by Proposition 
\ref{inv_rootimages}  one has $r(c^{-1}tc)=c^{-1}(r(t))$  and $w^{-1}(r(t))=K(w)(r(c^{-1}tc))\in\Pi$ therefore $t$ is not a left inversion for $w$ and $tw>_Bw$.
\end{proof}
\subsection{An involutive automorphism for bipartite Coxeter elements}

In this section we consider the particular case of bipartite Coxeter elements.  Consider a partition $S = S_+ \uplus S_-$, where $S_+$ (respectively $S_-$) contains pairwise commuting elements.   Then let $c_+ = \prod_{s\in S^+}$ and $c_- = \prod_{s\in S^-}$. The bipartite standard Coxeter element is $c=c_+c_-$.
 
\begin{prop}\label{Ldef} The map $L$ defined by 
\[  
  L(w)=c_+wc_-
\]
is an involutive anti-automorphism of the poset $NC(W,c)$. 
\end{prop}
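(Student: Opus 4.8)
The plan is to deduce the whole statement from two facts already available: the Kreweras complement $K(u)=u^{-1}c$ is an anti-automorphism of $(NC(W,c),\leq)$, and by Proposition~\ref{propelem} the absolute order is preserved both by inversion $w\mapsto w^{-1}$ and by conjugation. The idea is to exhibit $L$ as a composite of maps of these types, so that its behaviour on the lattice is forced rather than checked by hand.

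First I would record that $c_+$ and $c_-$ are involutions: each is a product of pairwise commuting simple reflections, whence $c_+^2=c_-^2=1$. Involutivity of $L$ is then immediate, since $L^2(w)=c_+(c_+wc_-)c_-=c_+^2\,w\,c_-^2=w$. The algebraic heart of the argument is the identity
\[
  L(w)=K\bigl(c_+\,w^{-1}\,c_+\bigr),
\]
which I would verify directly: using $(c_+w^{-1}c_+)^{-1}=c_+wc_+$ and the relation $c_+c=c_-$ (a consequence of $c_+^2=1$), one gets $K(c_+w^{-1}c_+)=(c_+w^{-1}c_+)^{-1}c=c_+w\,c_+c=c_+wc_-=L(w)$. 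Thus $L=K\circ\phi\circ\iota$, where $\iota(w)=w^{-1}$ and $\phi(u)=c_+uc_+$ is conjugation by $c_+$.

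It then remains to follow these three maps through the relevant lattices. By Proposition~\ref{propelem}, $\iota$ is an order isomorphism $NC(W,c)\to NC(W,c^{-1})$, and $\phi$ preserves the absolute order and is a bijection of $W$; its restriction lands in $NC(W,c)$ because $\phi(c^{-1})=c_+(c_-c_+)c_+=c_+c_-=c$, using $c^{-1}=c_-c_+$. Hence $\phi$ is an order isomorphism $NC(W,c^{-1})\to NC(W,c)$. Finally $K$ is an order-reversing bijection of $NC(W,c)$. Composing two order-preserving maps with one order-reversing map, $L$ is a well-defined order-reversing bijection of $NC(W,c)$; together with the computation above this shows $L$ is an involutive anti-automorphism.

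The step I expect to require the most care is simply the bookkeeping of base points: one must keep track of the fact that $\iota$ and $\phi$ route $NC(W,c)$ through $NC(W,c^{-1})$ and back to $NC(W,c)$, which rests on the two identities $c^{-1}=c_-c_+$ and $\phi(c^{-1})=c$. Once the decomposition $L=K\circ\phi\circ\iota$ is in place, everything else is a direct consequence of the invariance properties of Proposition~\ref{propelem} and the anti-automorphism property of $K$.
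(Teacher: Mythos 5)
Your proof is correct and follows essentially the same route as the paper: both express $L$ as the composite $K\circ\phi\circ\iota$ of inversion, conjugation by $c_+$, and the Kreweras complement, and deduce the anti-automorphism property from the invariance statements of Proposition~\ref{propelem}, with involutivity coming from $c_\pm^2=e$. Your write-up is in fact slightly more careful than the paper's, since you verify the identity $L=K\circ\phi\circ\iota$ and the base-point bookkeeping explicitly rather than asserting them.
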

\begin{proof} 
The map $L$ is the composition of the maps $w\to w^{-1}$ which is an isomorphism from $NC(W,c)$ to $NC(W,c^{-1})$,
the isomorphism $w\mapsto c_+ wc_+$ from $NC(W,c^{-1})$ to $NC(W,c)$ and the Kreweras map $K$ which is an antiautomorphism of $NC(W,c)$, therefore $L$ is an anti-automorphism.  The fact that it is an involution follows from $c_\pm^2=e$. 
\end{proof}
We call the map $L$ the {\sl bipartite complement} on $NC(W,c)$. 

Like the Kreweras complement, the bipartite  complement  is  compatible  with the Bruhat order.

\begin{prop} \label{bipcomp1}\
\begin{enumerate}
\item Let $v,w \in NC(W,c)$ with $v\lessdot w$, then we have:
\[
  v >_B w \Longrightarrow L(v) >_B L(w).
\]
\item Let $v,w \in NC(W,c)$ with $v\lessdot w$, and suppose  that $v$ has full support, then we have:
\[
  L(v) >_B L(w) \Longrightarrow v >_B w.
\]
\end{enumerate}
\end{prop}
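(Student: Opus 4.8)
The plan is to reduce the statement about the bipartite complement $L$ to the already-established Proposition~\ref{krew1} about the Kreweras complement $K$, by exploiting the factorization of $L$ exhibited in the proof of Proposition~\ref{Ldef}. Recall that $L(w)=c_+wc_-=K\bigl(c_+w^{-1}c_+\bigr)$, where $\phi(w)=c_+w^{-1}c_+$ is a poset isomorphism from $NC(W,c)$ to $NC(W,c^{-1})$ followed by the isomorphism back to $NC(W,c)$, and $K$ is the Kreweras antiautomorphism. The key point to verify is that each of the two intermediate maps is compatible with the Bruhat order in the appropriate direction, so that chaining them with the known Bruhat-compatibility of $K$ from Proposition~\ref{krew1} yields the desired implications.

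First I would treat the conjugation-and-inversion step $\phi(w)=c_+w^{-1}c_+$. Here I expect the cleanest route is to check directly that for a cover relation $v\lessdot w$ with $v=tw$ (so $t\in T$), the Bruhat comparison is preserved or reversed in a controlled way. Since $c_+$ is an involution consisting of commuting simple reflections, conjugation by $c_+$ permutes $T$ and sends reduced words to reduced words, so $v\mapsto c_+vc_+$ preserves $<_B$; meanwhile $w\mapsto w^{-1}$ sends $\Inv_R$ to $\Inv_L$ via $t\mapsto wtw^{-1}$ (as noted after the definition of inversions), and so reverses the Bruhat relation between $v$ and $w$ when $vw^{-1}\in T$. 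Thus $\phi$ reverses the direction of a covering Bruhat comparison, and this reversal, composed with the reversal already built into Proposition~\ref{krew1}, should compose to give the stated monotone implications for $L$.

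The main obstacle, and the place requiring genuine care, is the full-support hypothesis in part $ii)$. In Proposition~\ref{krew1}$ii)$ the converse implication for $K$ requires $v$ to have full support, and I must check that this hypothesis is correctly transported through the maps $w\mapsto w^{-1}$ and $w\mapsto c_+w c_+$. Inversion clearly preserves support, and conjugation by $c_+$ preserves full support since it is an automorphism of $(W,S)$ fixing $S$ setwise; so the full-support condition on $v$ translates into a full-support condition on the corresponding element $\phi(v)$ to which Proposition~\ref{krew1}$ii)$ is applied. I would verify that the cover relation $v\lessdot w$ is likewise carried to a cover relation by these bijections (they are poset isomorphisms/antiisomorphisms for the absolute order $\leq$, by Proposition~\ref{propelem} and the properties of $K$), so the hypotheses of Proposition~\ref{krew1} are met at each stage.

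Assembling these observations, the argument is: given $v\lessdot w$ with $v>_Bw$, apply the direction-reversal of $\phi$ and then part $i)$ of Proposition~\ref{krew1} to conclude $L(v)>_BL(w)$, proving $i)$; for $ii)$, run the same chain in reverse, invoking part $ii)$ of Proposition~\ref{krew1} at the step where the full-support hypothesis is consumed. I expect the whole proof to be short once the bookkeeping of which maps reverse $<_B$ and which preserve support is made explicit, with the only subtlety being to confirm that exactly one reversal is contributed by $\phi$ so that the parities of reversals match those in Proposition~\ref{krew1}.
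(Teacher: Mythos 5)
There is a genuine gap: your reduction of $L$ to $K$ via the factorization $L=K\circ\phi$ with $\phi(w)=c_+w^{-1}c_+$ is a reasonable starting point (it is exactly how the paper proves that $L$ is an anti-automorphism in Proposition~\ref{Ldef}), but the two claims you use to control the Bruhat order along the intermediate maps are both false, and they are false in a way that hides the entire content of the proposition. First, inversion does not reverse the Bruhat relation; it preserves it, since $\ell(w)=\ell(w^{-1})$ and $vw^{-1}\in T$ implies $v^{-1}(w^{-1})^{-1}=w^{-1}(vw^{-1})w\in T$, so $v<_Bw\Leftrightarrow v^{-1}<_Bw^{-1}$. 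Second, conjugation by $c_+$ does not ``send reduced words to reduced words'' and does not preserve $<_B$: already in $S_3$ with $S_+=\{s_1\}$, $S_-=\{s_2\}$ one has $s_2<_B s_1s_2$, while conjugating by $c_+=s_1$ gives $s_1s_2s_1>_B s_2s_1$, so the comparison is reversed. Note also that your parity bookkeeping is inconsistent with the target: Proposition~\ref{krew1}$\,i)$ \emph{preserves} the Bruhat comparison on absolute-order covers ($v>_Bw\Rightarrow K(v)>_BK(w)$), so if $\phi$ really contributed ``exactly one reversal'' you would conclude $L(v)<_BL(w)$, the opposite of what is claimed.

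What the reduction actually requires is the statement that for the specific cover pairs arising here (elements of $NC(W,c^{-1})$ with $v^{-1}\lessdot w^{-1}$), conjugation by $c_+$ preserves the Bruhat comparison — and that is essentially equivalent in difficulty to the proposition itself. The paper does not attempt this reduction; it argues directly with roots: from Lemma~\ref{krewlemm} one gets $tc>_Bc$ (and $ct'>_Bc$ for $t'=w^{-1}tw$), hence $t\notin S_+$ and $w^{-1}tw\notin S_-$, and then repeated use of Proposition~\ref{inv_rootimages} tracks the signs of $c_+(r(t))$ and $c_-w^{-1}(r(t))$ to decide whether $c_+tc_+$ is a left inversion of $L(w)=c_+wc_-$. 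Some argument of this kind — establishing that $t$ avoids $S_+$ on the left and $S_-$ on the right and then following the root through $c_+$ and $c_-$ — is unavoidable, and it is exactly the step your proposal replaces with a false general principle. (Your handling of the full-support hypothesis in $ii)$ is fine as far as it goes, but it sits on top of the same unsupported compatibility claim.)
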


\begin{proof}
We first prove $i)$.
Suppose that $v=tw$ for some reflection $t$, that $v<w$ and $w<_Bv$, then $tc>_Bc$ by Lemma~\ref{krewlemm}. It follows, applying repeatedly Proposition~\ref{inv_rootimages}, that $t\notin S_+$ and $r(c_+tc_+)=c_+(r(t))$. Applying the same reasoning to $v=w(w^{-1}tw)$ gives that $c(w^{-1}tw)>_Bc$ and $w^{-1}tw\notin S_-$ therefore, since $w^{-1}(r(t))=r(w^{-1}tw)$, one has  $r(c_w^{-1}-twc_-)=c_-w^{-1}(r(t))$.
One has $L(v)=c_+tc_+L(w)$ and  $L(w)^{-1}(r(c_+tc_+)=r(c_w^{-1}-twc_-)$ and $c_+tc_+$ is not a left inversion of $L(w)$, thus $L(v)>_BL(w)$.

We now prove $ii)$. Let $v=tw<w$ for some reflection $t$ and suppose that  $v$ has full support. Arguing as in to the proof of $ii)$ in Proposition~\ref{krew1} we get  $tc>_Bc$ moreover let $t'=v^{-1}tv$ then $v=wt'$ and, similarly, $ct'>_Bc$. In particular, $t$ is not a left inversion of  $c_+$ and $t'$ is not a right inversion of $c-$.  Suppose now that $Lv>_BLw$ i.e. $c_+vc_->_Bc_+tvc_-$ then  $c_+tc_+$ is a left inversion of $c_+vc_-$ and, since $c_+(r(t))$ is positive we see that  $c_-v^{-1}(r(t))$ is a negative root. Since $t'$ is not a right inversion  of $c_-$ we see that $v^{-1}(r(t))$ is a negative root therefore $w=tv<_Bv$.
\end{proof}

\subsection{Interval partitions}\label{interval_sec}
They form a natural subset of $ NC(W,c)$ defined as follows.

\begin{defi}
Given a Coxeter element $c$, an {\sl interval partition} is an element of $NC(W,c)$ whose associated parabolic group is standard, i.e. the
interval partitions are the $w\in NC(W,c)$ such that $\Gamma(w) = W_J$ for some $J\subset S$. 
We denote by $ INT(W,c)$ the set of interval partitions of $NC(W,c)$.
\end{defi}

Equivalently, $w\in W$ is an interval partition if and only if $w\leq_B c$.  Since a reduced word for $c$ contains each simple reflection exactly once, it is easy to see that each subword is a reduced word, therefore the orders $\leq$ and $\leq_B$ coincide on $ INT(W,c)$ and give it the structure of a Boolean lattice, isomorphic to the lattice of subwords of a reduced expression for $c$, or to the lattice of subsets of $\Delta$, or of $S$.

One can check that, when $W$ is the symmetric group with its canonical Coxeter generators and $c$ is the cycle $(1\ldots n)$,
the interval partitions coincide with the classical ones (cf.~e.g.~\cite{speicher}).

\begin{defi}
For $w\in NC(W,c)$ we denote by $\underline w$ the largest interval partition below $w$ and by $\overline w$ the smallest interval partition above $w$ in $( NC(W,c),\leq)$.

 The {\it relative Kreweras complement} of $w$ with respect to $\overline w$ is $K(w,\overline w) =  w^{-1}  \overline w$.
\end{defi}
Since the restriction to $INT(W,c)$ of the abolute order is a lattice order the existence and uniqueness of $\overline w$ and $\underline w$ is immediate. 
One can also characterize $\underline w$ by its associated simple system:
\[
  \Delta(\Gamma(\underline w))=\Delta(\Gamma(w))\cap \Delta(W).
\]
Analogously 
if  $J\subset S$ is the support of $w$, then the subword $\gamma$ of $c$ consisting of elements of $J$ is the unique Coxeter element in  $NC(W,c)\cap W_J$ and one has $\gamma=\overline w$,  moreover $w$ has full support in $\Gamma(\gamma)=W_J$. It is easy to see that the map $w\mapsto \overline w$ is a lattice homomorphism. 

There is also a characterization of $\overline w$ as a maximum, which we leave to the reader to check.

\begin{prop}  \label{conncomp0}
For $w\in  NC(W,c)$ one has
\[
  \overline w=\max\{\eta\in  INT(W,c)\,|\,\eta\leq_B w\}.
\]  
\end{prop}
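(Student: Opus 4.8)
The plan is to show that $\overline w$ is simultaneously an upper bound for the set $A=\{\eta\in INT(W,c)\,:\,\eta\le_B w\}$ and an element of $A$; since $\leq$ and $\le_B$ agree on $INT(W,c)$, this identifies $\overline w$ as $\max A$. The first half is soft, the second half is the real content. For the upper bound, I would argue that if $\eta\in INT(W,c)$ satisfies $\eta\le_B w$, then by the subword property of the Bruhat order $\eta$ is a subword of a reduced word of $w$, so $\supp(\eta)\subseteq\supp(w)=J$. Since $\eta$ is an interval partition it is the subword of $c$ on the letters of $\supp(\eta)$, while $\overline w$ is the subword of $c$ on the letters of $J$; as $INT(W,c)$ is Boolean under $\eta\mapsto\supp(\eta)$, it follows that $\eta\le\overline w$, hence $\eta\le_B\overline w$. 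Thus $\overline w$ dominates every element of $A$.

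The key step is to prove $\overline w\le_B w$, so that $\overline w\in A$. Here I would first reduce to a statement inside $W_J$. Recall that $\overline w=\gamma$ is the standard Coxeter element of $W_J$ and that $w$ has full support in $W_J$; moreover $w\le\overline w$ in the absolute order, so $w\in NC(W_J,\gamma)$. Since the absolute length is intrinsic to $W_J$, it suffices to prove $\gamma\leq_{B_{W_J}}w$ inside $W_J$, because Dyer's Proposition~\ref{dyerprop} then transports this to $\gamma\le_B w$ in $W$. This reduces everything to the following \emph{crux}: if $w\in NC(W,c)$ has full support then $c\le_B w$. This is the main obstacle; it is exactly the phenomenon that moving up in the absolute order toward $c$ forces moving down in the Bruhat order, and it genuinely uses full support (it fails for $w=s_1\le s_1s_2$ in type $A_2$, where $c\not\le_B s_1$).

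To prove the crux I would induct on $d=\dim\Fix(w)=\rk\big(K(w)\big)$. When $d=0$ we have $w=c$ and there is nothing to prove. When $d\ge1$, set $u=K(w)\ne e$ and write $u={\mathfrak s}_1\cdots{\mathfrak s}_m$ as a product of the simple generators of $\Gamma(u)$ using Proposition~\ref{reading1prodsimples}, where $m=d$. Then $x:={\mathfrak s}_1\cdots{\mathfrak s}_{m-1}=u{\mathfrak s}_m$ is an absolute lower cover of $u$ (it is a minimal factorization of rank $d-1$ by Lemma~\ref{elemlemm}), and since $u={\mathfrak s}_1\cdots{\mathfrak s}_m$ is a reduced word for the Coxeter element $u$ inside $\Gamma(u)$, the generator ${\mathfrak s}_m$ is a right descent of $u$ in $\Gamma(u)$; Dyer's Proposition~\ref{dyerprop} then gives $x<_B u$.

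Now put $z=K^{-1}(x)$. Because $K$ is an anti-automorphism of $\big(NC(W,c),\le\big)$, the cover $x\lessdot u=K(w)$ is reversed into $w\lessdot z\le c$, and $z$ again has full support since $\supp(w)\subseteq\supp(z)$; moreover $K(z)=x<_B u=K(w)$. Applying Proposition~\ref{krew1}(ii), whose full-support hypothesis is satisfied by $w$, yields $z<_B w$, while induction applied to $z$ (for which $\dim\Fix(z)=d-1$) gives $c\le_B z$. Transitivity then produces $c\le_B z\le_B w$, completing the induction and hence the crux. The delicate points are checking that $x$ really is an absolute cover of rank $d-1$ and that the Kreweras anti-automorphism reverses $x\lessdot u$ into $w\lessdot z$; both are routine once the factorization of $u$ is in hand, so the weight of the whole argument rests on combining Proposition~\ref{reading1prodsimples}, Dyer's Proposition~\ref{dyerprop} and the Bruhat--Kreweras compatibility of Proposition~\ref{krew1}.
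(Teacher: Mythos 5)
The paper does not actually supply a proof of Proposition~\ref{conncomp0}: it is explicitly ``left to the reader,'' so there is nothing to compare against line by line. Judged on its own, your argument is correct. The upper-bound half is indeed soft ($\eta\le_B w$ gives $\supp(\eta)\subset\supp(w)$ by the subword property, and interval partitions are determined by their supports inside the Boolean lattice $INT(W,c)$), and the reduction of $\overline w\le_B w$ to the full-support case inside $W_J$ via Proposition~\ref{dyerprop} is legitimate, since absolute length and the property of being a standard Coxeter element are intrinsic to $W_J$. Your crux --- $w$ full implies $c\le_B w$ --- is exactly the specialization to $w=c$ of the implication $iii)\Rightarrow ii)$ in Proposition~\ref{llfull}, which the authors prove later by essentially the same mechanism you use: peel a simple generator of $K(w)$ off a reduced word in $\Gamma(K(w))$, get a Bruhat descent there, transport it by Dyer's theorem, and transfer it back through the Kreweras complement using Proposition~\ref{krew1}$(ii)$. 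Your version is arranged as a clean induction on $\dim\Fix(w)$ and is, if anything, slightly more careful than the paper's later argument about where the full-support hypothesis enters (always at the bottom element $w$ of the cover $w\lessdot z$, with $z$ inheriting full support from $\supp(w)\subset\supp(z)$ via $\Gamma(w)\subset\Gamma(z)\subset W_{\supp(z)}$). Importantly, you only invoke Lemma~\ref{elemlemm} and Propositions~\ref{reading1prodsimples}, \ref{dyerprop} and \ref{krew1}, all of which precede Proposition~\ref{conncomp0} in the paper, so there is no circularity with Proposition~\ref{llfull}. The one step you label routine and do not spell out --- that $\supp(w)\subset\supp(z)$ for $w\le z$ --- does require the observation that $\Gamma(z)$ is contained in $W_{\supp(z)}$, but this is the same argument the paper itself uses in the proof of Proposition~\ref{conncomp}, so it is fair to treat it as available.
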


We now consider the upper ideal $\{w\in NC(W,c)\,|\, w\geq\gamma\}$ for certain interval partitions $\gamma$.

\begin{prop}\label{leftdesc}
Let $s\in  S$ be a left descent of $c$ (i.e. $sc<_Bc$).

\begin{enumerate}
 \item Let $w\in NC(W,c)$ be such that $s\leq w$, then $sw\in W_{\langle s\rangle}$.
 \item The map $v\mapsto sv$ defines a lattice homomorphism from $NC( W_{\langle s\rangle},sc)$ to $NC(W,c)$ whose image is the subset 
$\{w\in NC(W,c)|w\geq s\}$.
\end{enumerate}
There is a similar statement with right descents.
\end{prop}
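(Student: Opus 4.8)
The plan is to realize $\{w\in NC(W,c): w\geq s\}$ as a copy of the full noncrossing-partition lattice of the rank-$(n-1)$ group $W_{\langle s\rangle}$, transported by left multiplication by $s$. First I would record the structural facts that make everything run. Since $s$ is a left descent, $sc<_Bc$ and $\ell(sc)=n-1$, so (exactly as in the proof of Lemma~\ref{krewlemm}) $sc$ is obtained by deleting the single occurrence of $s$ from a reduced word for $c$; hence $sc\in W_{\langle s\rangle}$ and is a standard Coxeter element of $W_{\langle s\rangle}$, so that $NC(W_{\langle s\rangle},sc)$ is defined. As $sc$ is a Coxeter element of a rank-$(n-1)$ group, $\ell_T(sc)=n-1$, whence $c=s\cdot(sc)$ is a minimal factorization and in particular $s\leq c$.

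For part $i)$, given $s\leq w\leq c$ I would apply the second assertion of Proposition~\ref{propelem} with $u=s$, to get $sw=s^{-1}w\leq s^{-1}c=sc$. Then I argue with fixed spaces, using only the always-valid implication that a smaller element in the absolute order has a larger fixed space: from $sc\in W_{\langle s\rangle}$ we get $\Fix(W_{\langle s\rangle})\subseteq\Fix(sc)$, and from $sw\leq sc$ we get $\Fix(sc)\subseteq\Fix(sw)$. Hence $sw$ fixes $\Fix(W_{\langle s\rangle})$ pointwise, and since $W_{\langle s\rangle}$ is exactly the pointwise stabilizer of $\Fix(W_{\langle s\rangle})$, we conclude $sw\in W_{\langle s\rangle}$. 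The same computation also yields $sw\leq sc$, i.e.\ $sw\in NC(W_{\langle s\rangle},sc)$, a fact I reuse below.

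For part $ii)$, well-definedness and the identification of the image both come from a single minimal factorization. Given $v\in NC(W_{\langle s\rangle},sc)$, writing $sc=v\cdot(v^{-1}sc)$ minimally and prepending $s$ gives $c=s\cdot v\cdot(v^{-1}sc)$, which is minimal by the length count $1+\ell_T(v)+\ell_T(v^{-1}sc)=1+\ell_T(sc)=n$. By Lemma~\ref{elemlemm} the subproduct $sv$ is then minimal, so $sv\leq c$ and $s\leq sv$; thus $v\mapsto sv$ lands in $\{w\in NC(W,c):w\geq s\}$. Surjectivity onto this set is precisely part $i)$, since any such $w$ equals $s\cdot(sw)$ with $sw\in NC(W_{\langle s\rangle},sc)$, and injectivity is immediate. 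For the order isomorphism, the forward direction follows from Lemma~\ref{elemlemm} applied to the minimal factorization $sv_2=s\cdot v_1\cdot(v_1^{-1}v_2)$ (valid when $v_1\leq v_2$), and the reverse direction from Proposition~\ref{propelem} applied to $s\leq sv_1\leq sv_2$. Because $\{w\geq s\}$ is a principal filter, hence a sublattice of the lattice $NC(W,c)$, an order isomorphism between the two lattices is automatically a lattice isomorphism, yielding the asserted homomorphism; the right-descent version follows by the symmetric argument with right multiplication (equivalently, via the inversion symmetry of Proposition~\ref{propelem}).

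The step I expect to be the main obstacle is the fixed-space argument in part $i)$: one must be careful that in the absolute order smaller elements have \emph{larger} fixed spaces, and must invoke the identification of the standard parabolic $W_{\langle s\rangle}$ with the pointwise stabilizer of its own fixed space. Once $sc\in W_{\langle s\rangle}$, $\ell_T(sc)=n-1$ and $s\leq c$ are in hand, the remainder is routine bookkeeping with minimal factorizations.
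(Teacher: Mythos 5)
Your proof is correct and follows essentially the same route as the paper: part $i)$ is Proposition~\ref{propelem} giving $sw\leq sc$ followed by the identification of $\{u : u\leq sc\}$ with a subset of $W_{\langle s\rangle}$ (which you justify via fixed spaces, a detail the paper leaves implicit), and part $ii)$ rests on the length additivity $\ell_T(sv)=\ell_T(v)+1$, which you extract from the minimal factorization $c=s\cdot v\cdot(v^{-1}sc)$ and Lemma~\ref{elemlemm} where the paper simply asserts it. The only difference is that you supply more of the routine verifications (injectivity, the principal-filter/sublattice observation) than the paper's very terse proof does.
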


\begin{proof}
$i)$ It follows from Proposition \ref{propelem} that $sw\leq sc$ hence $sw\in NC(W_{\langle s\rangle})$.

$ii)$ It is easy to see that if $v\in W_{\langle s\rangle}$ then $\ell_T(sv)=\ell_T(v)+1$  which implies the result, using Proposition \ref{propelem} to see that it is a homomorphism and the first statement to see that the map is surjective.

Finally the case of right descents follows by considering the order preserving isomorphism  $w\mapsto w^{-1}$ from $NC(W,c)$ to $NC(W,c^{-1})$.
\end{proof}

\begin{coro}\label{leftdesccor}
Let $a$ be an initial subword of $c$ and $b$ be a final subword with $\ell(a)+\ell(b)\leq \ell(c)$, then $ab \in  INT(W,c)$ and 
the set $\{w\in NC(W,c)|ab\leq w\}$ is the image of the map  $v\mapsto avb$ from $NC(W_{\langle ab\rangle},a^{-1}cb^{-1})$ to $NC(W,c)$, which is a lattice homomorphism. 
\end{coro}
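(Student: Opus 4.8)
The plan is to deduce this from Proposition~\ref{leftdesc} by peeling off the letters of $a$ one at a time from the left and those of $b$ one at a time from the right. First I would record the easy facts. Writing $c=s_{i_1}\cdots s_{i_n}$, the initial subword is $a=s_{i_1}\cdots s_{i_j}$ and the final subword is $b=s_{i_k}\cdots s_{i_n}$, and the hypothesis $\ell(a)+\ell(b)\le\ell(c)$ forces $j<k$, so the two blocks are disjoint. Hence $ab$ is the subword of $c$ obtained by deleting the middle positions $j+1,\dots,k-1$; since every subword of the multiplicity-free reduced word of $c$ is reduced, $ab\le_B c$, and therefore $ab\in INT(W,c)$. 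Cancelling, $a^{-1}cb^{-1}=s_{i_{j+1}}\cdots s_{i_{k-1}}$ is precisely the standard Coxeter element of the parabolic subgroup $W_{\langle ab\rangle}=W_{S\setminus\supp(ab)}$, where $\supp(ab)=\supp(a)\cup\supp(b)$.

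Next I would treat $a$ alone. For each $m$, the reflection $s_{i_m}$ is a left descent of the Coxeter element $s_{i_m}\cdots s_{i_n}$ of the parabolic subgroup $W_{S\setminus\{s_{i_1},\dots,s_{i_{m-1}}\}}$, so Proposition~\ref{leftdesc}(ii) applies at every step (to that parabolic subgroup as a reflection group), and composing the steps shows that $\phi_a\colon v\mapsto av$ is a lattice homomorphism from $NC(W_{S\setminus\supp(a)},a^{-1}c)$ to $NC(W,c)$. To identify its image I would run an induction on $\ell(a)$: assuming the image of $v\mapsto a'v$ (with $a=s_{i_1}a'$) is $\{u\mid a'\le u\}$, the composite image becomes $\{w\mid s_{i_1}\le w\text{ and }a'\le s_{i_1}w\}$, and I would check this equals $\{w\mid a\le w\}$. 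This equivalence is the technical heart: using $s_{i_1}\le a$, the identity $s_{i_1}a=a'$, the additivity $\ell_T(a)=1+\ell_T(a')$ (valid since $a$ is a Coxeter element of $W_{\supp(a)}$), and Proposition~\ref{propelem}, one direction follows by adding absolute lengths and the other from $u\le v\le w\Rightarrow u^{-1}v\le u^{-1}w$. A symmetric argument with right descents, valid because $b$ is a final subword of $a^{-1}c$, gives a lattice homomorphism $\psi_b\colon u\mapsto ub$ from $NC(W_{S\setminus(\supp(a)\cup\supp(b))},a^{-1}cb^{-1})$ to $NC(W_{S\setminus\supp(a)},a^{-1}c)$ with image $\{v\mid b\le v\}$.

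Finally I would compose: $\phi_a\circ\psi_b$ is the map $v\mapsto avb$, a lattice homomorphism as a composite of such. For the image, any $w=avb$ in it satisfies $a\le w$ (it lies in the image of $\phi_a$) and $b\le a^{-1}w$ (since $a^{-1}w=vb$ lies in the image of $\psi_b$); conversely any $w$ with $ab\le w$ has $a\le ab\le w$, whence $w$ lies in the image of $\phi_a$, and $b=a^{-1}(ab)\le a^{-1}w$ by Proposition~\ref{propelem}, whence $a^{-1}w$ lies in the image of $\psi_b$. Matching these with the single condition $ab\le w$ is done by comparing absolute lengths, using $\ell_T(ab)=\ell_T(a)+\ell_T(b)$: from $a\le w$ and $b\le a^{-1}w$ one computes $\ell_T(w)=\ell_T(a)+\ell_T(b)+\ell_T(b^{-1}a^{-1}w)=\ell_T(ab)+\ell_T((ab)^{-1}w)$, i.e.\ $ab\le w$. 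Thus the image is exactly $\{w\in NC(W,c)\mid ab\le w\}$.

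The step I expect to be the main obstacle is this image identification --- verifying that the iterated ``$\ge s$'' conditions of Proposition~\ref{leftdesc} assemble into the single condition ``$\ge a$'', and then the left and right images combine into ``$\ge ab$''. Everything there rests on additivity of the absolute length across the factorizations $w=a\cdot(a^{-1}w)$ and $a^{-1}w=b\cdot(b^{-1}a^{-1}w)$, which relies on $a,b$ having disjoint supports and on $ab$ being a standard Coxeter element of $W_{\supp(ab)}$. Keeping track of which simple reflection is a genuine descent of the current, partially cancelled Coxeter element at each peeling step is the bookkeeping that must be handled carefully.
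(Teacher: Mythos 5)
Your proof is correct and follows essentially the same route as the paper, which simply says ``by induction on $\ell(a)+\ell(b)$ using the preceding proposition'': you peel off one simple reflection at a time and apply Proposition~\ref{leftdesc} inside the successive standard parabolic subgroups. The only difference is that you spell out the image identification (converting the iterated conditions ``$\geq s$'' into ``$\geq a$'' and then ``$\geq ab$'' via additivity of $\ell_T$ and Proposition~\ref{propelem}), details which the paper leaves to the reader; these are handled correctly.
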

\begin{proof} By induction on $\ell(a)+\ell(b)$ using the preceding proposition.
\end{proof}
In the bipartite case, one can say more.
\begin{prop}\label{leftdesccorbip}
Let $c$ be a bipartite Coxeter element then the complement map $L$ restricts to an involution on $INT(W,c)$.
\end{prop}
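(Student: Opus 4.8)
The plan is to exploit the fact, already established in Proposition~\ref{Ldef}, that $L$ is an involutive anti-automorphism of the whole poset $NC(W,c)$. Since the restriction of an involution to an invariant subset is again an involution, it suffices to prove that $L$ maps $INT(W,c)$ into itself; the involutivity will then be inherited, and surjectivity onto $INT(W,c)$ will follow from $L\circ L=\mathrm{id}$.

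To this end I would use the parametrization of interval partitions by subsets of $S$. Recall that $INT(W,c)$ is the Boolean lattice of subwords of a reduced expression for $c$, so that for each $J\subset S$ there is a unique interval partition $w_J$ with $\supp(w_J)=J$. For a bipartite Coxeter element $c=c_+c_-$ a reduced expression is obtained by writing the (pairwise commuting) elements of $S_+$ first and then those of $S_-$; keeping exactly the letters of $J$ gives
\[
  w_J=\Big(\prod_{s\in J\cap S_+}s\Big)\Big(\prod_{s\in J\cap S_-}s\Big),
\]
where the order within each factor is irrelevant since the elements of $S_+$ (respectively $S_-$) commute.

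The key step is then a direct computation of $L(w_J)=c_+w_Jc_-$. Writing $A=J\cap S_+$ and $B=J\cap S_-$, and using that the elements of $S_+$ commute and are involutions, one obtains $c_+\prod_{s\in A}s=\prod_{s\in S_+\setminus A}s$ (after cancelling the square $(\prod_{s\in A}s)^2=e$); symmetrically $\prod_{s\in B}s\,c_-=\prod_{s\in S_-\setminus B}s$. Combining these,
\[
  L(w_J)=\Big(\prod_{s\in S_+\setminus A}s\Big)\Big(\prod_{s\in S_-\setminus B}s\Big)=w_{S\setminus J}.
\]
In particular $L(w_J)$ is again a subword of $c$, hence an interval partition, so $L$ preserves $INT(W,c)$; moreover it acts there as the complementation $J\mapsto S\setminus J$ of the Boolean lattice, which is visibly an involution on $INT(W,c)$.

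Since the computation is elementary, there is no genuine obstacle here: the only points requiring a little care are, first, checking that keeping the letters of $J$ in the chosen bipartite reduced word for $c$ does produce the interval partition $w_J$ with support exactly $J$, and second, that the rearrangements used — pulling the factors indexed by $A$, respectively $B$, past the remaining commuting generators and cancelling the resulting squares — are legitimate. Both follow at once from the commutation relations inside $S_+$ and inside $S_-$, which is precisely where the bipartite hypothesis is used.
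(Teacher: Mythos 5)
Your proof is correct and follows exactly the route the paper takes: the paper's own (one-line) proof simply asserts that $L$ acts as complementation under the identification of $INT(W,c)$ with subsets of $S$, and your explicit computation $L(w_J)=c_+w_Jc_-=w_{S\setminus J}$, using the commutations inside $S_+$ and $S_-$ and the involutivity of the simple reflections, is precisely the verification of that assertion.
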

\begin{proof} It is easy to see that $L$ corresponds to the complementation map if on identitifes $INT(W,c)$ with the set of subsets of $\Delta$ or $S$.
\end{proof}
Since $L$ is an involutive anti-automorphism one can see that, if $\gamma\in INT(W,c)$ then
$L$ exchanges the sets
$\{w |\overline w=\gamma\}$ and $\{w |\underline w=L(\gamma)\}$.
\section{\texorpdfstring{Two order relations on $W$}{Two order relations on W}}\label{order_sec}

\subsection{Definition of the orders}
Consider a Coxeter system $(W,S)$.
For any covering relation of the absolute order $v\lessdot w$ in $W$ one has $vw^{-1}\in T$, therefore $v$ and $w$ are comparable for the Bruhat order, namely one has $v<_Bw$ if $\ell(v)<\ell(w)$ or $w<_Bv$ if $\ell(w)<\ell(v)$. 
We introduce two order relations which encode this distinction by refining the absolute order.

\begin{defi}
For any covering relation $v\lessdot w$ in $W$ define 
\begin{align*}
v\sqsubsetdot w \ &\text{ if }\ v<_Bw, \\
v\lldot w \ &\text{ if }\ w<_Bv,
\end{align*}
and extend these  relations to order relations $\sqsubset$ and $\ll$  on $W$ by transitive closure. 
\end{defi}

Since $\sqsubsetdot$ 
and $\lldot$ are included in $\lessdot$ (seeing relations as sets of pairs), they are acyclic and their transitive
closures $\sqsubset$ and $\ll$ are partial orders which are included in $\leq$ and whose cover relations are $\sqsubsetdot$ and $\lldot$.
This justifies the previous definition. Also the rank function for $\leq$ serves as a rank function for $\sqsubset$ and $\ll$.

The order $\sqsubset$ was introduced by the second author in \cite{josuat} on the set $ NC(W,c)$, while $\ll$ was introduced by Belinschi and Nica in 
\cite{belinschinica} in the case of classical noncrossing partitions, both with different definitions. We shall see below that they are specializations of our definitions.

Observe that, as an immediate consequence of the definitions, one has 
\begin{align*}
   v\sqsubset w \quad &\Longrightarrow \quad v\leq w, \ v\leq_B w, \\
   v\ll w  \quad  &\Longrightarrow\quad v\leq w,\  w\leq_Bv.
\end{align*}
The opposite implications, however, do not hold in general. 
 Here is a  counterexample
in the symmetric group $\mathfrak{S}_5$: take (in cycle notation)
\begin{equation}\label{counter}
   v = (2,4), \qquad w=(1,5)(2,3,4).
\end{equation}
We have $v\leq w$, $\ell(v)=3$ and $\ell(w)=9$. There are two elements between $v$ and $w$ in the absolute order, which are
$(1,5)(2,4)$ and $(2,3,4)$. Their respective Coxeter lengths are $10$ and $2$, therefore we have:
\begin{align*}
   v &= (2,4) \sqsubsetdot (1,5)(2,4) \lldot (1,5)(2,3,4)=w, \\
   v &= (2,4) \lldot (2,3,4) \sqsubsetdot (1,5)(2,3,4)=w.
\end{align*}
It follows that $v \nsqsubset w$. 
On the other side, we have $v\leq_B w$ and a shortest path in the Bruhat graph is (an arrow with label $t$
represents multiplication by $t$ on the right):
\[
  (1,5)(2,3,4) \overset{(1,3)}{\longrightarrow} (1,4,2,3,5)
               \overset{(2,5)}{\longrightarrow} (1,4,2)(3,5)
               \overset{(3,5)}{\longrightarrow} (1,4,2)
               \overset{(1,2)}{\longrightarrow} (2,4).               
\]
The respective lengths of the elements in this path are $9$, $8$, $5$, $4$, $3$. Note that  $v$ and $w$ are both noncrossing partitions but the path goes through elements that are not 
noncrossing partitions.


Let $w\in W$ and let $\Gamma(w)$ be the associated parabolic subgroup. Since the orders $\sqsubset$ and $\ll$ can be defined using only the Bruhat graph and the absolute order, it follows 
from Proposition~\ref{dyerprop} that, on the set $\{v\in W\,|\,v\leq w\}$ the orders $\sqsubset$ and $\ll$ depend only on the  Bruhat order on $\Gamma(w)$ (with its canonical system of generators).

In the sequel we shall mainly be interested in the restrictions of these order relations to $ NC(W,c)$ for some standard Coxeter element $c$.

\subsection{The Kreweras complement }
Using the orders $\sqsubset$ and $\ll$ we can reformulate Proposition  \ref{krew1}.

\begin{prop} \label{krew1bis} \ 
Let $v,w \in NC(W,c)$ with $v\lessdot w$.
\begin{enumerate}
\item
We have:
\[
  v \ll w \Longrightarrow K(w) \sqsubset K(v).
\]
\item
Suppose $v$ has full support in $ NC(W,c)$, then we have:
\begin{align} \label{eq_krew2a}
  K(w) \sqsubset K(v) \Longrightarrow v \ll w.
\end{align}
\end{enumerate}
\end{prop}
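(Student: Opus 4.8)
The plan is to derive Proposition~\ref{krew1bis} as a direct reformulation of Proposition~\ref{krew1}, whose statement concerns the absolute cover relation $v \lessdot w$ together with the Bruhat comparisons $v >_B w$ and $K(v) >_B K(w)$. The key observation is that the orders $\sqsubset$ and $\ll$ are \emph{defined} on cover relations precisely in terms of these Bruhat comparisons: for $v \lessdot w$ one has $v \ll w$ exactly when $w <_B v$, and one has $v \sqsubset w$ exactly when $v <_B w$. So the entire task reduces to translating the hypotheses and conclusions of Proposition~\ref{krew1} into the new notation, after checking that the relevant pairs are still cover relations for the new orders.

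First I would verify that the relevant pairs are indeed covers. Since $K$ is an anti-automorphism of $(NC(W,c),\leq)$, the cover relation $v \lessdot w$ in the absolute order is sent to the cover relation $K(w) \lessdot K(v)$. This is the crucial structural fact that lets us speak of $\sqsubset$ and $\ll$ cover relations on the image, because by the remark following the definition of the orders, the cover relations of $\sqsubset$ and $\ll$ are exactly $\sqsubsetdot$ and $\lldot$, which live on absolute covers. Thus $K(w) \sqsubset K(v)$ is equivalent, for this pair, to the single cover relation $K(w) \sqsubsetdot K(v)$, i.e.\ to $K(w) <_B K(v)$.

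With that in place, the translation is mechanical. For part $i)$: the hypothesis $v \ll w$ on the cover $v \lessdot w$ means by definition $w <_B v$, which is exactly the hypothesis $v >_B w$ of Proposition~\ref{krew1}$i)$; its conclusion $K(v) >_B K(w)$, i.e.\ $K(w) <_B K(v)$, is by definition (applied to the cover $K(w) \lessdot K(v)$) exactly $K(w) \sqsubsetdot K(v)$, hence $K(w) \sqsubset K(v)$. For part $ii)$: under the same full-support hypothesis on $v$, the implication of Proposition~\ref{krew1}$ii)$, namely $K(v) >_B K(w) \Rightarrow v >_B w$, translates verbatim into $K(w) \sqsubset K(v) \Rightarrow v \ll w$, reading $K(w) <_B K(v)$ as $K(w) \sqsubsetdot K(v)$ and $w <_B v$ as $v \lldot w$.

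I do not expect a genuine obstacle here, since this is a dictionary argument rather than a new proof; the one point requiring care is the very first step. One must justify that $\sqsubset$ and $\ll$ restricted to such a cover pair collapse to the single cover relation rather than requiring a transitive chain: this is immediate because $v \lessdot w$ is an absolute cover, so the only way to have $v \ll w$ or $K(w) \sqsubset K(v)$ between absolute-adjacent elements is through a single $\lldot$ or $\sqsubsetdot$ step. I would state this explicitly to avoid any ambiguity about whether the transitive closure could introduce spurious comparabilities, and then present parts $i)$ and $ii)$ as immediate restatements of the corresponding parts of Proposition~\ref{krew1}.
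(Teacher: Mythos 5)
Your proposal is correct and matches the paper's treatment: the paper presents Proposition~\ref{krew1bis} as a direct reformulation of Proposition~\ref{krew1} with no further argument, and your dictionary translation (using that $K$ is an anti-automorphism of $(NC(W,c),\leq)$, so absolute covers map to absolute covers, and that $\sqsubset$, $\ll$ between absolute-adjacent elements reduce to the single cover relations $\sqsubsetdot$, $\lldot$) is exactly the justification implicit there. The one point you flag --- that the transitive closure cannot introduce spurious comparabilities between elements whose ranks differ by one --- is the right thing to make explicit.
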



Note that by taking the contraposition in $i)$ of Proposition~\ref{krew1bis}, we see that the same result holds with $K^{-1}$ instead of $K$.
The situation is slightly different for $ii)$ and we need another argument. The map $\iota: w \mapsto w^{-1}$ leaves $S$ and $T$ invariant, preserves length, absolute length and support, moreover it sends $ NC(W,c)$ to $ NC(W,c^{-1})$. We have 
\[ 
  (\iota \circ K_c)(w) = \iota( w^{-1} c) = c^{-1} w = c^{-1} \iota(w^{-1}) = ( K_{c^{-1}}^{-1} \circ \iota ) (w).
\]
Applying the map $\iota$ to Equation~\eqref{eq_krew2a} shows that $ii)$ also holds with $K^{-1}$ instead of $K$. Using these remarks we can rephrase the proposition
as follows.

\begin{prop}
Let $v,w \in NC(W,c)$ with $v\lessdot w$. Then at least one of the relations $v\sqsubsetdot w$ or $K(w)\sqsubsetdot K(v)$ holds.
If both hold, neither $v$ nor $K(w)$ have full support.
\end{prop}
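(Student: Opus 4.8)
The plan is to read this statement as a direct repackaging of Proposition~\ref{krew1bis}, exploiting two elementary facts that I would record at the outset. First, since $K$ is an anti-automorphism of $(NC(W,c),\leq)$ reversing rank (it sends rank $k$ to rank $n-k$, because $\ell_T(w)+\ell_T(w^{-1}c)=\ell_T(c)=n$), it carries the cover relation $v\lessdot w$ to the cover relation $K(w)\lessdot K(v)$. Second, for any cover $x\lessdot y$ in the absolute order, exactly one of $x\sqsubsetdot y$ or $x\lldot y$ holds, and moreover $x\sqsubset y$ already forces $x\sqsubsetdot y$: since $\sqsubsetdot\subseteq\lessdot$, every intermediate vertex of a $\sqsubsetdot$-chain from $x$ to $y$ lies in the absolute interval $[x,y]=\{x,y\}$, so the chain must be a single step.

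For the first assertion I would split on whether $v\sqsubsetdot w$ holds. If it does, we are done. Otherwise $v\lldot w$, i.e. $v\ll w$ across this single cover, and Proposition~\ref{krew1bis}(i) yields $K(w)\sqsubset K(v)$; since $K(w)\lessdot K(v)$ is a cover, this collapses to $K(w)\sqsubsetdot K(v)$, giving the second alternative.

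For the second assertion, suppose both $v\sqsubsetdot w$ and $K(w)\sqsubsetdot K(v)$ hold. To see that $v$ lacks full support I argue by contradiction: if $v$ were full, then from $K(w)\sqsubset K(v)$ Proposition~\ref{krew1bis}(ii) would give $v\ll w$, which is incompatible with $v\sqsubsetdot w$ on a cover. To see that $K(w)$ lacks full support I apply the $K^{-1}$-form of Proposition~\ref{krew1bis}(ii), established in the preceding remarks via the involution $\iota$, to the cover $K(w)\lessdot K(v)$. Writing $a=K(w)$ and $b=K(v)$, one has $K^{-1}(a)=w$ and $K^{-1}(b)=v$, so the hypothesis $K^{-1}(b)\sqsubset K^{-1}(a)$ reads $v\sqsubset w$, which holds; hence if $a=K(w)$ were full we would obtain $a\ll b$, that is $K(w)\ll K(v)$, contradicting $K(w)\sqsubsetdot K(v)$.

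I do not expect a genuine obstacle here, as the argument is essentially bookkeeping built on Proposition~\ref{krew1bis}; the one thing to watch is attaching the full-support hypothesis to the correct element and keeping $K$ and $K^{-1}$ straight. The $K^{-1}$-versions of Proposition~\ref{krew1bis} recorded in the remarks are precisely what make the $K(w)$ case symmetric to the $v$ case, so the whole statement reduces to two applications of part~(i) and part~(ii) together with the collapse of $\sqsubset$ to $\sqsubsetdot$ on a cover.
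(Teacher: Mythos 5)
Your proof is correct and follows essentially the same route as the paper, which presents this proposition as an immediate rephrasing of Proposition~\ref{krew1bis} combined with the preceding remark that part $ii)$ also holds with $K^{-1}$ in place of $K$ (via the involution $\iota$); your argument is exactly that bookkeeping, carried out explicitly. The one step you make explicit that the paper leaves implicit --- that $\sqsubset$ collapses to $\sqsubsetdot$ across a cover of the absolute order, since the rank function is additive along $\sqsubsetdot$-chains --- is correct and needed to pass from the conclusion $K(w)\sqsubset K(v)$ of Proposition~\ref{krew1bis} to the cover relation asserted here.
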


There are similar statements for the map $L$ in the bipartite case, which we leave to the reader.
\subsection{\texorpdfstring{Characterization and properties of the order $\sqsubset$ on $ NC(W,c)$}{The order [}}
We consider a Coxeter system $(W,S)$ and a standard Coxeter element $c$.
\begin{prop}\label{sqchar}
Let $v,w\in   NC(W,c)$ and denote by $\leq_{B_w}$ the Bruhat order on $\Gamma(w)$, then the three properties below are equivalent:
\begin{enumerate}
 \item $v\sqsubset w$, 
 \item $v\leq w$ and $v\leq_{B_w} w$,
 \item $\Delta(\Gamma(v)) \subset \Delta(\Gamma(w))$.
\end{enumerate}
\end{prop}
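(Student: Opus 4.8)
The plan is to prove the three conditions equivalent after first reducing to the case where $\Gamma(w)$ is the full group. Each of $(i)$, $(ii)$, $(iii)$ forces $v\leq w$: for $(i)$ because $\sqsubset$ refines the absolute order, for $(ii)$ by hypothesis, and for $(iii)$ because $\Delta(\Gamma(v))\subset\Delta(\Gamma(w))$ gives $\Gamma(v)\subset\Gamma(w)$ and hence $v\leq w$. Thus in every case $v\in\Gamma(w)$, so by the observation following the counterexample \eqref{counter} the order $\sqsubset$ on $\{u\in W\mid u\leq w\}=NC(\Gamma(w),w)$ depends only on the Bruhat order of $P:=\Gamma(w)$. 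Since $w$ is a standard Coxeter element of $P$ by Proposition~\ref{reading1prodsimples}, and since both the absolute length and the simple system $\Delta(\Gamma(v))$ of a parabolic subgroup are intrinsic (the positive roots of $\Gamma(v)$ are $\Pi\cap\Gamma(v)$ regardless of the ambient group), I may replace $(W,c)$ by $(P,w)$ throughout. In this reduced setting the three conditions read: $(i)$ $v\sqsubset c$; $(ii)$ $v\leq_B c$; and $(iii)$ $\Delta(\Gamma(v))\subset\Delta$, i.e.\ $v$ is an interval partition.

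With this reduction, the equivalence $(ii)\Leftrightarrow(iii)$ is precisely the characterization of interval partitions recalled in Section~\ref{interval_sec}: $v\leq_B c$ holds if and only if $\Gamma(v)$ is a standard parabolic subgroup, which by the description of standard parabolics via their simple roots (Section~\ref{reflect_sec}) is equivalent to $\Delta(\Gamma(v))\subset\Delta$. The implication $(i)\Rightarrow(ii)$ is then immediate: a $\sqsubset$-chain $v=v_0\sqsubsetdot\cdots\sqsubsetdot v_k=c$ satisfies $v_i<_B v_{i+1}$ at each step, so $v<_B c$ by transitivity of the Bruhat order.

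The remaining implication $(ii)\Rightarrow(i)$ carries the real content, and I would prove it by induction on the corank $n-\ell_T(v)$. If $v=c$ there is nothing to prove. Otherwise, fix a reduced word $c=s_{i_1}\cdots s_{i_n}$ and write $v$ as the subword supported on $J=\supp(v)\subsetneq S$. Choose any $s\in S\setminus J$ and let $v'$ be the interval partition supported on $J\cup\{s\}$, i.e.\ the longer subword of $c$ obtained by restoring $s$. Then $v'=asb$ and $v=ab$ differ by exactly one letter, so $v'v^{-1}=asa^{-1}\in T$ and $\ell_T(v')=\ell_T(v)+1$, giving an absolute cover $v\lessdot v'$; moreover deleting that single letter from the reduced word $asb$ of $v'$ exhibits $v$ as a proper subword, so $v<_B v'$ and hence $v\sqsubsetdot v'$. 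Since $v'\leq_B c$ with strictly larger absolute length, the induction hypothesis yields $v'\sqsubset c$, and therefore $v\sqsubset c$.

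The main obstacle is making the reduction to $\Gamma(w)=W$ fully rigorous: one must verify that the interval-partition characterization and the combinatorial description of interval partitions as subwords of $c$ transport correctly to the reflection group $P=\Gamma(w)$ equipped with its own simple system $\Delta(P)$, and that $\Delta(\Gamma(v))$ computed inside $W$ coincides with the one computed inside $P$ (which it does, since in both cases the positive roots of $\Gamma(v)$ are $\Pi\cap\Gamma(v)$). Once this transfer is secured, the inductive construction of the $\sqsubset$-chain in the third paragraph is the only genuinely combinatorial step, and everything else follows from results already established.
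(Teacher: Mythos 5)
Your proof is correct and follows essentially the same route as the paper: both rest on Proposition~\ref{reading1prodsimples}, Dyer's restriction of the Bruhat graph to $\Gamma(w)$, the subword characterization of Bruhat order, and an induction producing a chain of $\sqsubsetdot$-covers whose simple systems interpolate between $\Delta(\Gamma(v))$ and $\Delta(\Gamma(w))$. The only differences are cosmetic — you reduce explicitly to $w=c$ and identify condition $iii)$ with being an interval partition, then build the chain upward from $v$ by adjoining simple reflections, whereas the paper inducts downward from $w$.
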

\begin{proof}

If $v\sqsubset w$ then, as we saw, $v\leq w$ and $v\leq_{B_w}w$ so that $i)$ implies $ii)$.

Suppose now that $ii)$ holds.
According to Proposition~\ref{reading1prodsimples}, write $w$ as a product of the simple reflections of $\Gamma(w)$, say $w={\mathfrak s}_1\cdots {\mathfrak s}_k$. 
Any $v\leq_{B_w}w$ is the product of a subword ${\mathfrak s}_{i_1}\dots,{\mathfrak s}_{i_l}$ and $\Delta(\Gamma(v))=\{{\mathfrak s}_{i_1},\dots,{\mathfrak s}_{i_l}\}\subset \Delta(\Gamma(w))$ so that $ii)$ implies $iii)$. 

Finally if $\Delta(\Gamma(v)) \subset \Delta(\Gamma(w))$ then by taking the generated groups
one has $\Gamma(v) \subset \Gamma(w)$ and $v\leq w$.  If $\ell_T(v)=\ell_T(w)-1$ then $vw^{-1}\in T$ therefore either $v<_{B_w}w$ or $w<_{B_w}v$. Since 
$\Delta(\Gamma(v))\subset\Delta(\Gamma(w))$ but $\Delta(\Gamma(v))\neq\Delta(\Gamma(w))$  there exists a simple reflection 
${\mathfrak s}\in \Delta(\Gamma(w))\setminus\Delta(\Gamma(v))$. As $v$ is a product of reflections in $\Delta(\Gamma(v))$ it follows that $w$ cannot be obtained as   a subword of  any reduced decomposition of $v$, therefore  $v<_{B_w}w$ and $v$ is obtained by taking a subword of length $k-1$ of the reduced decomposition $w={\mathfrak s}_1\cdots {\mathfrak s}_k$. The proof now follows by induction on the cardinality of $\Delta(\Gamma(w))\setminus\Delta(\Gamma(v))$, using the fact that the Bruhat graph of the subgroups $\Gamma(v)$ are obtained by restriction of the Bruhat graph of $W$ (Proposition~\ref{dyerprop}).
\end{proof}

Property $iii)$ was the original definition of the order $\sqsubset$ on $ NC(W,c)$ in \cite{josuat}.
Since any subset of $\Delta(\Gamma(w))$ is a simple system in $\Gamma(w)$ we see that  the lower ideal $\{ v \; : \; v\sqsubset w \}$ is isomorphic
to the boolean lattice of order $\rk(w)$, in particular its cardinality is $2^{\rk(w)}$, moreover on this ideal the three order relations $\leq,\leq_{B_w}$ and $\sqsubset$ coincide.

Note that $\Delta(\Gamma(w))\subset\Delta(\Gamma(v))$ if and only if $S(\Gamma(w))\subset S(\Gamma(v))$ using the  bijection between positive roots and reflections. 

Let now $c$ be a standard Coxeter element, 
since $\Gamma(c)=W$ and $\Delta(W)=S$ it follows that 
the  interval partitions in $INT(W,c)$ are exactly the noncrossing partitions $w\in NC(W,c)$ satisfying $w \sqsubset c$.

\begin{prop}
Let $w\in   NC(W,c)$ and $\gamma\in  INT(W,c)$ then 
\[
  \gamma\leq w\Longleftrightarrow \gamma\sqsubset w.
\]
\end{prop}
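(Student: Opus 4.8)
The plan is to treat the two implications separately, noting at the outset that one of them is free. The implication $\gamma \sqsubset w \Rightarrow \gamma \leq w$ is a special case of the general fact, recorded immediately after the definition of the two orders, that $v \sqsubset w$ forces $v \leq w$; this uses nothing about $\gamma$ being an interval partition. So the content of the statement lies entirely in the reverse implication $\gamma \leq w \Rightarrow \gamma \sqsubset w$, and this is where the interval-partition hypothesis must be used.

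To prove the reverse implication I would reduce to a statement about simple roots via Proposition~\ref{sqchar}: since $\gamma, w \in NC(W,c)$, it suffices to establish the inclusion $\Delta(\Gamma(\gamma)) \subset \Delta(\Gamma(w))$, which is exactly condition $iii)$ there. First I would exploit that $\gamma$ is an interval partition, so $\Gamma(\gamma) = W_J$ is a \emph{standard} parabolic subgroup for some $J \subset S$; consequently its simple roots are honest simple roots of $W$, that is $\Delta(\Gamma(\gamma)) \subset \Delta$. Next I would translate the hypothesis $\gamma \leq w$ into $\Gamma(\gamma) \subset \Gamma(w)$, from which every positive root of $\Gamma(\gamma)$, and in particular every $\rho \in \Delta(\Gamma(\gamma))$, is a positive root of $\Gamma(w)$, i.e.\ lies in $\Pi(\Gamma(w)) = \Pi \cap \Gamma(w)$.

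The crux is then to upgrade ``$\rho$ is a positive root of $\Gamma(w)$'' to ``$\rho$ is a \emph{simple} root of $\Gamma(w)$'' for each $\rho \in \Delta(\Gamma(\gamma))$. For this I would invoke the extremality characterization of simple roots in Remark~\ref{positivetosimple}, applied inside the reflection group $\Gamma(w)$: a positive root of $\Gamma(w)$ is simple precisely when it cannot be written as a nonnegative combination of two or more distinct positive roots of $\Gamma(w)$. Now $\rho \in \Delta$ is simple in $W$, so it admits no such expression over all of $\Pi$; since $\Pi(\Gamma(w)) \subset \Pi$, it a fortiori admits none over the smaller set $\Pi(\Gamma(w))$, and therefore $\rho \in \Delta(\Gamma(w))$. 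This gives $\Delta(\Gamma(\gamma)) \subset \Delta(\Gamma(w))$, and Proposition~\ref{sqchar} then yields $\gamma \sqsubset w$, completing the proof.

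The main obstacle I expect is precisely this last promotion step, and it is worth flagging why it is the heart of the matter: in general $\Gamma(\gamma) \subset \Gamma(w)$ does \emph{not} force $\Delta(\Gamma(\gamma)) \subset \Delta(\Gamma(w))$ — that discrepancy is exactly what the order $\sqsubset$ measures and what the counterexample in \eqref{counter} illustrates. What rescues the argument here is the interval-partition hypothesis, which forces $\Delta(\Gamma(\gamma)) \subset \Delta$; combined with the fact that an extreme ray of the positive-root cone of $W$ that happens to lie in the (smaller) positive-root cone of $\Gamma(w)$ is automatically an extreme ray there, this makes the inclusion go through. I would take care to state this extreme-ray principle cleanly, since it is the only nonformal ingredient.
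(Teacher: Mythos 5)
Your proof is correct and follows essentially the same route as the paper's: both directions are handled identically, the forward one being immediate and the reverse one reducing via Proposition~\ref{sqchar}~$iii)$ to the inclusion of simple systems, which holds because a simple root of $W$ lying in $\Pi(\Gamma(w))$ is automatically simple in $\Gamma(w)$. The only difference is that you justify this promotion step explicitly via the extremality characterization of Remark~\ref{positivetosimple}, where the paper simply asserts it; that is a welcome addition rather than a divergence.
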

\begin{proof}
If $\gamma\sqsubset w$ then obviously $\gamma\leq w$. If $\gamma\leq w$ then $\Gamma(\gamma)\subset\Gamma(w)$ therefore 
$S(\Gamma(\gamma))\subset \Gamma(w)$. Since the elements of 
$S(\Gamma(\gamma))$ are simple reflections in $W$ they are also simple reflections in $\Gamma(w)$ therefore 
$S(\Gamma(\gamma))\subset S(\Gamma(w))$ and $\gamma\sqsubset w$ by $iii)$ of Proposition \ref{sqchar}.
\end{proof}

As is clear from its definition the order $\sqsubset$ is not invariant under conjugation and   one cannot, as in the case of the abolute order, reduce the study of $\sqsubset$ on $NC(W,c)$ to the case where the Coxeter element $c$ is bipartite. Actually we will see in Section \ref{examples_sec} that, already for $S_4$, the order type of the poset $(NC(W,c),\sqsubset)$ does depend on the standard Coxeter element $c$.
\subsection{A simplification property}

The following result is similar to Proposition~\ref{propelem}.

\begin{lemm}\label{simplification}
Let $t\in T$ and $u,v\in  NC(W,c)$ be such that $u\leq v\sqsubset vt\leq c$  then $u\sqsubset ut\leq vt$.
\end{lemm}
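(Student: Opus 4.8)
The plan is to reduce the whole statement to the parabolic subgroup $W'=\Gamma(vt)$ and then to split the desired relation $u\sqsubsetdot ut$ into its two ingredients: the absolute-order part $u\lessdot ut$ (handled by Proposition~\ref{propelem}) and the Bruhat part $u<_B ut$ (handled by a support argument). Since $u\leq v\leq vt$ and $t\leq vt$ (the latter by Lemma~\ref{elemlemm}, as $vt=v\cdot t$ is a minimal factorization), all of $u,v,t,ut,vt$ lie in $W'=\Gamma(vt)$. Recall that on the ideal $\{x\in W\mid x\leq vt\}$ the orders $\sqsubset$, $\ll$, $\leq$ and $\leq_B$ depend only on the Bruhat order of $\Gamma(vt)$ (this is the consequence of Proposition~\ref{dyerprop} noted after the counterexample~\eqref{counter}). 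Hence I may work entirely inside $W'$, with its own simple system $S'=\Delta(W')$ and Coxeter element $c'=vt$; each assertion of the statement is equivalent to the corresponding assertion in $W'$.

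First I would dispose of the absolute-order part. Applying the second part of Proposition~\ref{propelem} to the chain $u\leq v\leq vt$ yields $u\leq uv^{-1}(vt)\leq vt$, that is $u\leq ut\leq vt$. This already gives the conclusion $ut\leq vt$, and shows $u\leq ut$; since $u$ and $ut$ differ by the single reflection $t$, this is a covering relation $u\lessdot ut$ with $\ell_T(ut)=\ell_T(u)+1$.

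The heart of the matter is the Bruhat part, namely $u<_B ut$, equivalently $t\notin\Inv_R(u)$. Here I would use the hypothesis $v\sqsubset vt=c'$: by the characterization of interval partitions as the noncrossing partitions $\sqsubset c'$ (equivalently by Proposition~\ref{sqchar}), $v$ is an interval partition of $NC(W',c')$, so $\Gamma(v)$ is a \emph{standard} parabolic subgroup $W'_J$ of $W'$ with $J\subset S'$. As $u\leq v$ we get $u\in W'_J$, whence $\supp_{W'}(u)\subset J$ and therefore every right inversion of $u$ has support contained in $J$. On the other hand $t\notin W'_J$: otherwise $vt\in W'_J$, contradicting that $c'=vt$ has full support $S'$ in $W'$; thus $\supp_{W'}(t)\not\subset J$. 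Consequently $t$ cannot be a right inversion of $u$, which is exactly $u<_B ut$. Combining this with the previous paragraph gives $u\lessdot ut$ and $u<_B ut$, i.e. $u\sqsubsetdot ut$, hence $u\sqsubset ut$, together with $ut\leq vt$.

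I expect the Bruhat step to be the only real obstacle. The naive monotonicity one might hope for — that $u\leq v$ and $t\notin\Inv_R(v)$ force $t\notin\Inv_R(u)$ — is false in general, so the argument cannot be purely local. The decisive point is that passing to $W'=\Gamma(vt)$ upgrades $v$ into an interval partition of $W'$, so that $\Gamma(v)$ becomes a standard parabolic $W'_J$ and $t$ is forced to involve a simple generator of $W'$ lying outside $\supp_{W'}(u)$; the support constraint on inversions then rules $t$ out as a right inversion of $u$.
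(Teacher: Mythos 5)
Your proposal is correct and follows essentially the same route as the paper: reduce to the parabolic subgroup $\Gamma(vt)$ where $vt$ becomes a standard Coxeter element, observe that $v\sqsubset vt$ then makes $\Gamma(v)$ a standard parabolic $W_J$ containing $u$ but not $t$, and conclude $u<_B ut$ from the support constraint. The only cosmetic difference is that the paper phrases the final Bruhat step via the subword characterization (since $\supp(ut)\not\subset J\supset\supp(u)$, one cannot have $ut\leq_B u$) while you phrase it via $\supp(t)\not\subset\supp(u)$ forcing $t\notin\Inv_R(u)$; these are the same support argument.
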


\begin{proof}
First one has $u\leq ut\leq vt$ by Proposition~\ref{propelem}.
Observe that the statement of the lemma depends only on the
absolute and the Bruhat orders  inside the parabolic subgroup $\Gamma(vt)$
therefore we can assume that $vt=c$ is a standard Coxeter element.
Since $ct\sqsubset c$ there exists
$s\in S$ such that $ct$ is a standard Coxeter element in $W_{\langle
s\rangle}$. It follows that $u\in W_{\langle s\rangle}$ and $t\notin W_{\langle s\rangle}$ therefore $ut\notin W_{\langle s\rangle}$ and $ut$ cannot be obtained as a subword of a reduced expression of $u$, thus $u\leq_B ut$ and $u\sqsubset ut$.
\end{proof}

\subsection{\texorpdfstring{$( NC(W,c),\sqsubset)$ as a flag simplicial complex}{NC(W,c) as a flag simplical complex}}

We now define a symmetric binary relation on the set $T$ of reflections such that $( NC(W,c),\sqsubset)$ is  the face poset of the associated flag simplicial complex. 
Recall that a simplicial complex is called a {\it flag} simplicial complex if, given vertices $v_1,\dots,v_k$, the set $\{v_1,\dots,v_k\}$ is a face
of the complex if and only if for all $1\leq i<j\leq n$, the set $\{v_i,v_j\}$ is a face. In order to define such a complex  it is enough to a 
give the symmetric relation on the vertices: $\{v_i,v_j\}$  is a face.

\begin{defi} \label{defi_Xi}
Let $\mathrel {\between_c} $ be the binary relation on $T$ such that  $t\mathrel {\between_c} u$ if  and only if
\begin{itemize}
 \item $t,u$ are $c$-noncrossing (see Definition~\ref{noncrois}),
 \item $\langle r(t) | r(u) \rangle \leq 0$.
\end{itemize}
Let $\Xi(W,c)$ denote the flag simplicial complex with vertex set $T$ associated to the relation 
$\mathrel {\between_c} $.
\end{defi}

\begin{theo}
The map $w \mapsto S( \Gamma(w) )$ is a bijection from $ NC(W,c)$  onto $\Xi(W,c)$.  Moreover, for $v,w\in NC(W,c)$ we have $v\sqsubset w$ if and only if  $S( \Gamma(v) ) \subset  S( \Gamma(w) )$.
\end{theo}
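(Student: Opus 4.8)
The theorem has two parts: that $w \mapsto S(\Gamma(w))$ is a bijection from $NC(W,c)$ onto the set of faces of $\Xi(W,c)$, and that this bijection carries the order $\sqsubset$ to inclusion of face sets. Let me plan how to prove each.

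\medskip

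The plan is to first dispose of the order-theoretic statement, since most of the work has already been done. By $iii)$ of Proposition~\ref{sqchar}, for $v,w \in NC(W,c)$ we have $v \sqsubset w$ if and only if $\Delta(\Gamma(v)) \subset \Delta(\Gamma(w))$, and by the remark following that proposition this is equivalent to $S(\Gamma(v)) \subset S(\Gamma(w))$ via the bijection $r$ between positive roots and reflections. Thus once the map is known to be a well-defined bijection onto the faces of $\Xi(W,c)$, the compatibility with the order is immediate. The real content is therefore the bijectivity claim, and this is where I would concentrate the argument.

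\medskip

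For bijectivity, I would proceed by characterizing the image. The map $w \mapsto \Gamma(w)$ is injective on $NC(W,c)$ (stated in Section~\ref{noncrois_sec}), and $\Gamma(w)$ is recovered from its simple system $S(\Gamma(w))$ by taking the generated group; hence $w \mapsto S(\Gamma(w))$ is injective. It remains to identify the image with the face set of $\Xi(W,c)$, i.e. to show that a subset $F \subset T$ equals $S(\Gamma(w))$ for some $w \in NC(W,c)$ if and only if $F$ is a face of $\Xi(W,c)$. Since $\Xi(W,c)$ is \emph{flag} by definition, $F$ is a face precisely when every pair $\{t,u\} \subset F$ satisfies $t \mathrel{\between_c} u$. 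So the task splits into two implications. First, if $w \in NC(W,c)$ and $t,u$ are two distinct simple reflections of $\Gamma(w)$, I must check $t \mathrel{\between_c} u$: the condition $\langle r(t)\mid r(u)\rangle \le 0$ holds because simple roots of any parabolic subgroup have pairwise nonpositive inner products (Remark following Proposition~\ref{dyerprop}, and Remark~\ref{positivetosimple}), and the $c$-noncrossing condition holds because $tu \le w \le c$ or $ut \le w \le c$, as $w$ is by Proposition~\ref{reading1prodsimples} a standard Coxeter element of $\Gamma(w)$ whose reduced factorization passes through both $t$ and $u$ in some order. Conversely, given a pairwise-$\mathrel{\between_c}$ set $F = \{t_1,\dots,t_k\}$, I must construct $w \in NC(W,c)$ with $S(\Gamma(w)) = F$.

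\medskip

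The construction of $w$ from a face $F$ is the main obstacle. The natural candidate is to set $P = \langle F \rangle$ and take $w$ to be the appropriate Coxeter element of $P$, but I must verify both that $F$ is genuinely the simple system $S(P)$ of the parabolic subgroup it generates, and that a suitable ordering of $F$ produces an element $w \le c$. The condition $\langle r(t_i)\mid r(t_j)\rangle \le 0$ for all pairs guarantees, via the Remark characterizing $\Delta(P)$ for parabolic subgroups, that $F$ is the simple system of $P$ and in particular $|F| = \rk(P)$, so $P$ is a parabolic subgroup with $S(P) = F$. The $c$-noncrossing hypothesis then furnishes, for each pair, a product $t_it_j$ or $t_jt_i$ below $c$; I would assemble these local commutation/order constraints into a global statement that some product of all the $t_i$ lies below $c$. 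Here I expect to invoke the structure of minimal factorizations of $c$ (Lemma~\ref{elemlemm}, Proposition~\ref{commCoxeter}) together with the fact that $c$-noncrossing pairs that do not commute already constrain their relative order in any minimal factorization, to build a linear ordering of $F$ realizing a minimal subfactorization of $c$; the resulting product $w$ then satisfies $w \le c$, lies in $P$, has full support in $P$, and so is a standard Coxeter element of $P = \Gamma(w)$ with $S(\Gamma(w)) = F$. Establishing that the pairwise data can always be coherently globalized — essentially a connectedness or compatibility argument for the partial order induced on $F$ by $c$ — is the delicate point I would treat most carefully.
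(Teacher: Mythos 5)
Your treatment of injectivity, of the order statement via Proposition~\ref{sqchar}, and of the inclusion of the image in $\Xi(W,c)$ all match the paper's argument and are fine. The problem is surjectivity: you correctly identify that the whole difficulty is to globalize the pairwise conditions ($c$-noncrossing plus nonpositive scalar products) into a single product $t_1\cdots t_k\leq c$, you announce that this is ``the delicate point I would treat most carefully'' --- and then you do not treat it. As written, the proposal contains a plan for this step but no proof, and the step is not routine. Two separate things must be established, and your sketch supplies neither.

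First, you need a linear ordering of $F$ compatible with all the pairwise constraints. The paper gets this by building the directed graph on $F$ with an edge $t\to u$ when $\langle r(t)|r(u)\rangle<0$ and $tu\leq c$ (well defined by Remark~\ref{onlycommutations}), observing that its undirected version is the Coxeter graph of the parabolic subgroup $P=\langle F\rangle$, and invoking the classical fact that the Coxeter graph of a finite reflection group is acyclic (a forest), so a topological sort exists. Your appeal to Proposition~\ref{commCoxeter} does not substitute for this: that proposition rearranges an already existing minimal factorization of $c$ itself; it says nothing about whether a consistent total order on an arbitrary pairwise-compatible set exists. Second, and more seriously, even once you have an ordering with ${\mathfrak s}_i{\mathfrak s}_j\leq c$ for all $i<j$, it is not automatic that ${\mathfrak s}_1\cdots{\mathfrak s}_k\leq c$; Lemma~\ref{elemlemm} goes in the wrong direction (from a minimal factorization to its subwords, not from pairwise subproducts to the full product). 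The paper closes this by induction on $\#F$ using the lattice structure of $NC(W,c)$: for three elements, ${\mathfrak s}_2{\mathfrak s}_3$ is the join of ${\mathfrak s}_2$ and ${\mathfrak s}_3$ in the ranked lattice $NC(W,c)$, and Proposition~\ref{propelem} gives ${\mathfrak s}_2,{\mathfrak s}_3\leq {\mathfrak s}_1 c$, hence ${\mathfrak s}_2{\mathfrak s}_3\leq{\mathfrak s}_1c$ and ${\mathfrak s}_1{\mathfrak s}_2{\mathfrak s}_3\leq c$. Without an argument of this kind (or an equivalent one), the surjectivity claim --- which is the substance of the theorem --- remains unproved.
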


\begin{proof}
The image of $w$ is a subset of $T$ and taking the subgroup  generated  by this subset gives $\Gamma(w)$.
Since the map $\Gamma$, restricted to $ NC(W,c)$, is injective it follows  $w \mapsto S( \Gamma(w) )$ is also injective.
It remains to show that its image is $\Xi(W,c)$.

Since $S(\Gamma(w))$ is a simple system, the scalar product of any two of its elements is nonpositive.
Moreover by Proposition~\ref{reading1prodsimples}, there is an indexing $S(\Gamma(w)) = \{{\mathfrak s}_1,\dots,{\mathfrak s}_k\}$
such that ${\mathfrak s}_1\cdots {\mathfrak s}_k \leq c$,  consequently ${\mathfrak s}_i{\mathfrak s}_j \leq c$ if $i<j$.
It follows that  ${\mathfrak s}_i \between_c {\mathfrak s}_j$ for all $i<j$, therefore $S(\Gamma(w)) \in \Xi(W,c)$.

It remains to show that the map is surjective. Let $X \in \Xi(W,c)$.
Because $\langle r(t) | r(u) \rangle \leq 0$ for all $t,u\in X$, this is a simple system, i.e.~$X=S(P)$
for some parabolic subgroup $P \subset W$. Now consider a directed graph $G$ defined as follows:
\begin{itemize}
 \item its vertex set is $X$,
 \item there is an edge from $t\in X$ to $u\in X$ if $\langle r(t) | r(u) \rangle < 0$ and $tu\leq c$.
\end{itemize}
It is well defined because, $\langle r(t) | r(u) \rangle < 0$ implies $tu\neq ut$ therefore, according to Remark~\ref{onlycommutations}, we cannot have
both $tu \leq c$ and $ut\leq c$.
The undirected version of $G$ is the Coxeter graph of $P$ (without labels on the edges). Since $P\subset W$ is finite, a classical argument implies that 
 $G$ is acyclic, so that it is possible to find an indexing $X=\{ {\mathfrak s}_1,\dots,{\mathfrak s}_k\}$ such that 
the existence of a directed edge ${\mathfrak s}_i \to {\mathfrak s}_j$ implies $i<j$. It follows that ${\mathfrak s}_i {\mathfrak s}_j \leq c$
for all $1\leq i<j\leq k$.

Let us consider the case $k = \# X = 3$, so suppose we have $X=\{{\mathfrak s}_1,{\mathfrak s}_2,{\mathfrak s}_3\}$ with ${\mathfrak s}_1 {\mathfrak s}_2 \leq c$, ${\mathfrak s}_1 {\mathfrak s}_3\leq c$, and ${\mathfrak s}_2 {\mathfrak s}_3 \leq c$. Since ${\mathfrak s}_2 {\mathfrak s}_3 \in  NC(W,c)$ and this poset is a ranked lattice, the least upper bound of ${\mathfrak s}_2$ and ${\mathfrak s}_3$ is ${\mathfrak s}_2 {\mathfrak s}_3$.
On the other hand, by Proposition~\ref{propelem}, we get ${\mathfrak s}_2, {\mathfrak s}_3 \leq {\mathfrak s}_1 c$ therefore ${\mathfrak s}_2 {\mathfrak s}_3 \leq {\mathfrak s}_1c$, and consequently ${\mathfrak s}_1 {\mathfrak s}_2 {\mathfrak s}_3\leq c$, so that $X = S(\Gamma({\mathfrak s}_1 {\mathfrak s}_2 {\mathfrak s}_3))$. The general case follows by  induction on $\# X$ and this shows the surjectivity.

Eventually, the fact that the bijection preserves the order follows from Proposition~\ref{sqchar}.
\end{proof}

\begin{rema}
 The complex $\Xi(W,c)$ also appears in the work of Reading \cite{reading,reading2}.  In \cite{reading2}, he defines the {\it canonical join complex} of a join-semidistributive lattice.  In the case of the $c$-Cambrian lattice, this complex is precisely our $\Xi(W,c)$. This is essentially shown in \cite[Section~6]{reading}.  We thank Henri M\"uhle for his careful explanations about this.
\end{rema}

%
%
%

%

\subsection{\texorpdfstring{Chains in $NC(W,c)$}{Chains in NC(W,c)}}

A factorization $c = t_1 \dots t_{n}$ where each $t_i$ is a reflection is minimal.  The number of such minimal factorizations of the cycle $c$ is (Deligne's formula):
\[
   \frac{n! h^n }{|W|}.
\]
One can interpret this result as the counting of maximal chains in $ NC(W,c)$.  This number can be obtained from the leading term in $k$ of $\Cat^{(k)}(W)$ and the identity $\prod_i (e_i+1) = |W|$.

A refined enumeration of maximal chains in $NC(W,c)$ using the relation $\sqsubsetdot$ was obtained in \cite{josuat}.

\begin{defi}[\cite{josuat}]
 For each maximal chain $\varpi = (w_i)_{0\leq i\leq n}$ in $NC(W,c)$ (i.e.~we have $\rk(w_i)=i$ and $w_i\lessdot w_{i+1}$),
 we define $\nir(\varpi)$ as the number of $i\in\{0,\dots,n-1\}$ such that $w_i\lldot w_{i+1}$, and
\[
   M(W,q) = \sum q^{\nir(\varpi)}
\]
where we sum over $\varpi$ maximal chain in $NC(W,c)$.
\end{defi}
The second author showed in \cite{josuat} that this polynomial is a rescaled version of {\it Fuß-Catalan numbers} $\Cat^{(k)}(W)$:
\begin{align} \label{maxchain_fusscat}
  M(W,q) = n! (1-q)^n \Cat^{(\frac{q}{1-q})}(W).
\end{align}
Note that the inverse relation is
\[
   \Cat^{(k)}(W) = \frac{1}{n!} (1+k)^n M\big(W,\tfrac{k}{1+k}\big)
\]
and it follows from \eqref{maxchain_fusscat} and \eqref{fusscat} that there exists a formula in terms of the degrees of the group:
\begin{align} \label{enum_maxchains}
  M(W,q) = \frac{n!}{|W|} \prod_{i=1}^n \big( d_i + q (h-d_i) \big).
\end{align}
We use here the degrees $d_i=e_i+1$ rather than the exponents because the formula is more compact.  
This relation will be used in Section~\ref{sec_fullrefs}.

Much more can be said in the case of the symmetric group \cite{bianejosuat},
%
%
%
it would be interesting to investigate the existence of similar formulas for other types.

\subsection{\texorpdfstring{Characterization and properties of the order $\ll$}{The order <<}}
\label{char_order_ll_sec}

The following proposition is the analog, for the order $\ll$, of Proposition \ref{sqchar}.

\begin{prop} \label{llfull}
Let $v,w\in   NC(W,c)$ and denote by $\leq_{B_w}$ the Bruhat order on $\Gamma(w)$, then the three properties below are equivalent:
\begin{enumerate}
 \item $v\ll w$,
 \item $v\leq w$ and $w\leq_{B_w}v$,
 \item $v\in \Gamma(w)$ and $v$ has full support as an element of $\Gamma(w)$.
\end{enumerate}
\end{prop}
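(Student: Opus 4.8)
The plan is to start with a reduction that removes the parabolic subgroup $\Gamma(w)$ from the picture entirely. All three conditions are intrinsic to $\Gamma(w)$: the absolute order is conjugation-invariant, $\leq_{B_w}$ is by definition the Bruhat order of $\Gamma(w)$, and ``$v$ has full support as an element of $\Gamma(w)$'' refers only to $\Gamma(w)$. By the discussion following the counterexample~\eqref{counter} together with Proposition~\ref{dyerprop}, the restriction of $\ll$ to $\{u : u\leq w\}$ coincides with the order $\ll$ computed inside $\Gamma(w)$. Since $w$ is a standard Coxeter element of $\Gamma(w)$ (Proposition~\ref{reading1prodsimples} and the remark after it), I would replace $W$ by $\Gamma(w)$ and $c$ by $w$, so that from now on $w=c$ is a standard Coxeter element of $W$. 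The requirements $v\in\Gamma(w)$ and $v\leq w$ then become automatic, and what is left to prove is the equivalence, for $v\leq c$, of (i) $v\ll c$; (ii) $c\leq_B v$; (iii) $v$ has full support.

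Next I would dispatch the two easy implications. For (i)$\Rightarrow$(ii), the defining consequence of $\ll$ gives directly $v\leq c$ and $c\leq_B v$. For (ii)$\Rightarrow$(iii), I would invoke the subword characterization of Bruhat order: $c\leq_B v$ means $c$ occurs as a subword of a reduced word for $v$, so that $\supp(c)\subseteq\supp(v)$; since $c$ has full support, this forces $\supp(v)=S$.

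The heart of the argument is (iii)$\Rightarrow$(i), which I would prove by induction on $n-\rk(v)$, the base case $v=c$ being trivial. For the inductive step it suffices to produce a single cover $v\lldot u$ with $u\leq c$: such a $u$ automatically has full support (support is monotone for $\leq$, since $w\mapsto\overline w$ is a lattice homomorphism with $\Gamma(\overline w)=W_{\supp(w)}$), so the induction hypothesis yields $u\ll c$ and hence $v\lldot u\ll c$. To find $u$ I would pass to the Kreweras complement. Since $\rk(K(v))=n-\rk(v)>0$, the Boolean structure of the $\sqsubset$-ideal below $K(v)$ (Proposition~\ref{sqchar} and the remark after it) furnishes a cover $z\sqsubsetdot K(v)$ of rank $\rk(K(v))-1$; setting $u=K^{-1}(z)$ and using that $K$ is an anti-automorphism of the absolute order gives $v\lessdot u$ together with $K(u)=z\sqsubsetdot K(v)$. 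Now Proposition~\ref{krew1bis}(ii), whose full-support hypothesis on $v$ is exactly assumption (iii), converts $K(u)\sqsubsetdot K(v)$ into $v\lldot u$, completing the step.

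The main obstacle is precisely this inductive step: one must guarantee an absolute cover above $v$ along which the Bruhat order decreases and which remains inside $NC(W,c)$. This is exactly where the full-support hypothesis is indispensable and where Proposition~\ref{krew1bis}(ii) does the real work; the reduction and the two easy implications are bookkeeping. Structurally, this argument is the mirror image, through the Kreweras complement, of the proof of Proposition~\ref{sqchar}.
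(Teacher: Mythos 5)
Your proof is correct and follows essentially the same route as the paper: both reduce to the Coxeter element of $\Gamma(w)$, dispatch $i)\Rightarrow ii)\Rightarrow iii)$ by the subword characterization of Bruhat order, and prove $iii)\Rightarrow i)$ by transporting $\sqsubsetdot$ relations in the lower $\sqsubset$-ideal of the Kreweras complement $K(v)$ back to $\lldot$ relations above $v$ via Proposition~\ref{krew1bis}$(ii)$. The only difference is packaging — you induct on corank one cover at a time using a coatom of the Boolean ideal below $K(v)$, where the paper writes $K(v)$ as a product of its simple generators and converts the whole chain at once.
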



\begin{proof}
We have already noted  that $i)$ implies $ii)$. 

If $ii)$ holds then $w$ can be obtained as a subword of a reduced decomposition of $v$ in $\Gamma(w)$. Since $w$ has full support in $\Gamma(w)$ it follows that $v$ has full support in $\Gamma(w)$ and $iii)$ holds.

Suppose now that $iii)$ holds, thus that $v$ has full support in $\Gamma(w)$.
If $\ell_T(v)=\ell_T(w)-1$ then either $w\leq_{B_w}v$ or $v\leq_{B_w}w$. Since $w$ is a Coxeter element in $\Gamma(w)$ (cf.~Proposition~\ref{reading1prodsimples})) if $v\leq_{B_w}w$ then $v$ cannot have full support. It follows that $w\leq_{B_w}v $ therefore $v\ll w$. If $\ell_T(v)=\ell_T(w)-l,\, l>1$
let $x=K(v)=wv^{-1}$ be the Kreweras complement of $v$ and let $x=\gamma_1\ldots \gamma_l$ be a reduced decomposition in $\Gamma(x)\subset \Gamma(w)$. The sequence  $x_0=e,x_1=\gamma_1,\ldots, x_l=x $ satifies $x_0=e\leq_{B_x} x_1\leq_{B_x}\ldots \leq_{B_x}x_l=x $ and the Bruhat graph of $\Gamma(x)$ is the restriction of the Bruhat graph of $\Gamma(w)$ therefore $x_0=e\leq_{B_w} x_1\leq_{B_w}\ldots \leq_{B_w}x_l=x $. Applying $ii)$ of Proposition~\ref{krew1bis} and using induction on $i$, 
one has $v\lldot x_{l-i}^{-1}w$ for all $i$.
\end{proof}

Let $ NC_n$ be the set of classical noncrossing partitions, corresponding to the symmetric group and the cycle $(1,\ldots,n)$ as Coxeter element,  then Belinschi and Nica~\cite{belinschinica} defined an order relation  $\ll$ on $ NC_n$  by $\pi \ll \rho $ if $\pi\leq\rho$ and for every block $B$ of $\rho$, the minimum and the maximum of $B$
belong to  the same block of $\pi$. It is easy to see that this agrees with our definition in this case.

\begin{prop}  \label{conncomp}
The poset $( NC(W,c),\ll)$ has $2^n$ connected components. They are the lower ideals
\begin{equation}  \label{conncompeq}  
   \{ w \in  NC(W,c) \; : \; w \ll \gamma \} = \{ w \in  NC(W,c) \; : \; \overline w = \gamma \}
\end{equation}  
where $\gamma$ runs through the $2^n$ interval partitions of $W$.
In particular, interval partitions are the maximal elements for the order $\ll$.
\end{prop}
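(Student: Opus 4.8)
The plan is to establish the set equality in \eqref{conncompeq} first, since it carries essentially all the content, and then to deduce connectedness and separation of the blocks by purely formal order-theoretic arguments. Fix an interval partition $\gamma$, so that $\Gamma(\gamma)=W_J$ for some $J\subseteq S$. I would prove $w\ll\gamma \Longleftrightarrow \overline w=\gamma$ directly from Proposition~\ref{llfull}. For the forward direction, $w\ll\gamma$ gives by the equivalence $(i)\Leftrightarrow(iii)$ there that $w\in\Gamma(\gamma)=W_J$ and that $w$ has full support in $W_J$; because $J\subseteq S$, the support of $w$ computed inside the standard parabolic $W_J$ coincides with $\supp(w)$ in $W$, so $\supp(w)=J$, and the support characterization of $\overline w$ recalled in Section~\ref{interval_sec} forces $\overline w=\gamma$. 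Conversely, if $\overline w=\gamma$ then that same characterization gives $w\leq\gamma$ with $w$ of full support in $\Gamma(\gamma)=W_J$, so condition $(iii)$ of Proposition~\ref{llfull} applies and $w\ll\gamma$. Since each $w\in NC(W,c)$ has a unique $\overline w$, and since the interval partitions are in bijection with the $2^n$ subsets of $S$, this shows the sets $\{w:w\ll\gamma\}$ partition $NC(W,c)$ into $2^n$ nonempty blocks.

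Next I would check connectedness and separation. Each block $\{w:w\ll\gamma\}$ is the principal lower ideal of $\gamma$ for $\ll$, hence connected: every element is $\ll$-comparable to its top $\gamma$. To rule out $\ll$-comparabilities across distinct blocks, suppose $v\ll w$. Taking $\gamma=\overline w$ in the equality just proved gives $w\ll\overline w$, so by transitivity of $\ll$ one obtains $v\ll\overline w$, and the equality again yields $\overline v=\overline w$. Thus any $\ll$-relation forces equal values of $\overline{\,\cdot\,}$, so both endpoints lie in the same block. Combined with the connectedness of each block, this identifies the blocks exactly with the connected components of $(NC(W,c),\ll)$, of which there are therefore $2^n$.

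Finally, for the last assertion I would argue that the maximal elements are precisely the interval partitions: each $\gamma$ is the maximum of its own block, hence maximal in the whole poset since no relation crosses blocks, while conversely if $w$ is maximal then $w\ll\overline w$ forces $w=\overline w$, so $w$ is an interval partition.

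The hard part will be the first step, and within it the only genuinely delicate point is the translation between ``$w$ has full support in the parabolic $\Gamma(\gamma)$'' and ``$\supp(w)=J$ in $W$''. This is clean precisely because interval partitions have \emph{standard} parabolic subgroups $W_J$ with $J\subseteq S$, so the two notions of support agree; it would fail for a general $w\in NC(W,c)$, whose $\Gamma(w)$ need not be standard. Once Proposition~\ref{llfull} and the support description of $\overline w$ are in hand, everything else reduces to transitivity and the elementary properties of principal ideals.
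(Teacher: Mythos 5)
Your proof is correct and follows essentially the same route as the paper: both rest on Proposition~\ref{llfull} together with the support characterization of $\overline w$, the only (minor) difference being that you deduce the absence of cross-block relations formally from the set equality via $v\ll w\ll\overline w$ and transitivity, whereas the paper argues directly that $v\ll w$ forces $\supp(v)=\supp(w)$ using $w\leq_B v$ and parabolic containment. Your explicit attention to the agreement of the two notions of support for the \emph{standard} parabolic $\Gamma(\gamma)=W_J$ is a point the paper leaves implicit, and you handle it correctly.
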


\begin{proof}
 From the characterization of $\ll$ in Proposition~\ref{llfull}, we can see that the two sets on both sides of \eqref{conncompeq} are the same.
 Clearly, each lower ideal considered here is connected, so it remains to show that they cannot be connected with each other.
 Let $v,w\in NC(W,c)$ with $v\ll w$, we thus have to show $\overline v = \overline w$, i.e. $v$ and $w$ have the same support.
 Since $v\ll w$, we have $w\leq_Bv$ therefore  $\supp(w)\subset\supp(v)$. If there exists  $s\in \supp(v)\setminus\supp(w)$ then  $w\in W_{\langle s\rangle}$ and $\Gamma(w)\subset W_{\langle s\rangle}$ which contradicts $v\in\Gamma(v)\subset \Gamma( w)$. 
\end{proof}


Since $c$ is itself an interval partition, the poset $(NCF(W,c),\ll)$ is one of the connected components described in the previous proposition.
We can call it the {\it main connected component}. The other connected components are of similar nature, since they can be seen as the 
main connected component of a standard parabolic subgroup. It is therefore enough to study the properties of the main connected component.

%

\begin{prop} \label{booleanideals0}
Let $w\in  NC(W,c)$, then the relative Kreweras complement $v\to v^{-1} \overline w $ defines
a poset anti-isomorphism 
\begin{equation}
  \big( \{ v\in  NC(W,c) \, : \, v \gg w \} , \ll \big) 
  \longrightarrow 
  \big( \{ v \in  NC(W,c) \, : \, v \sqsubset w^{-1} \overline w \} , \sqsubset \big). 
\end{equation}
\end{prop}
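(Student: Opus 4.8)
The plan is to reduce to the main connected component of a standard parabolic subgroup and then transport cover relations through the Kreweras complement by means of Proposition~\ref{krew1bis}.

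First I would set $\gamma=\overline w$ and $J=\supp(w)$, so that $\Gamma(\gamma)=W_J$ is a standard parabolic subgroup and $\gamma$ is a standard Coxeter element of $W_J$ (namely the subword of $c$ with letters in $J$). By Proposition~\ref{conncomp} the set $D:=\{v\in NC(W,c):v\gg w\}$ lies in the connected component $\{v:\overline v=\gamma\}$, which is exactly the set of elements of $NC(W_J,\gamma)$ having full support in $W_J$. Every $u$ with $u\sqsubset w^{-1}\gamma$ satisfies $u\le w^{-1}\gamma\le\gamma$, hence also lies in $NC(W_J,\gamma)$. Since $\ll$ and $\sqsubset$ depend only on the absolute order and the Bruhat graph, which restrict to $W_J$ by Proposition~\ref{dyerprop}, and since the relative Kreweras complement $v\mapsto v^{-1}\gamma$ restricted to $NC(W_J,\gamma)$ is the ordinary Kreweras complement $K=K_\gamma$ of the reflection group $W_J$ with Coxeter element $\gamma$, I may replace $(W,c)$ by $(W_J,\gamma)$. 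After this relabelling $w$ has full support, $\gamma=c$, the map in question is $K$, and the target set is $E:=\{u:u\sqsubset K(w)\}$.

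The heart of the argument is to show that $K$ maps $D$ bijectively onto $E$ and reverses the two orders, by lifting the covering statements of Proposition~\ref{krew1bis} along saturated chains. For $K(D)\subseteq E$ I pick $v\in D$, choose a saturated chain $w=v_0\lldot\cdots\lldot v_m=v$, note that $w\ll v_i$ forces every $v_i\in D$, and apply part~(i) to each cover to get $K(v_m)\sqsubset\cdots\sqsubset K(v_0)$, i.e.\ $K(v)\sqsubset K(w)$. For the reverse inclusion I start from $u\sqsubset K(w)$, take a saturated chain $u=u_0\sqsubsetdot\cdots\sqsubsetdot u_m=K(w)$, set $v_i=K^{-1}(u_i)$ so that $v_i\lessdot v_{i-1}$ in the absolute order and $v_m=w$, and run a downward induction: assuming $v_i$ has full support, part~(ii) applied to the cover $v_i\lessdot v_{i-1}$ (with $K(v_{i-1})=u_{i-1}\sqsubset u_i=K(v_i)$) yields $v_i\lldot v_{i-1}$, whence $w\ll v_{i-1}$ and thus $\overline{v_{i-1}}=c$ by Proposition~\ref{conncomp}, so $v_{i-1}$ has full support and the induction proceeds; at the end $v_0=K^{-1}(u)\in D$. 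The order-reversal is obtained identically: for $v_1,v_2\in D$, a saturated $\ll$-chain from $v_1$ to $v_2$ consists of elements of $D$, so part~(i) gives $K(v_2)\sqsubset K(v_1)$; conversely a saturated $\sqsubset$-chain from $K(v_2)$ to $K(v_1)$ lies in $E$ because $E$ is a $\sqsubset$-lower ideal, hence each intermediate element pulls back under $K^{-1}$ into $D$ and so has full support, and part~(ii) applies at every step to produce an $\ll$-chain from $v_1$ to $v_2$.

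The main obstacle is the full-support hypothesis required by Proposition~\ref{krew1bis}(ii): it must be checked at every step of each $\sqsubset$-chain before (ii) can be invoked. I resolve this by exploiting the ideal structure on both sides, namely that $D$ is the principal $\ll$-up-ideal generated by $w$ inside the full-support component and that $E$ is a $\sqsubset$-lower ideal, so that intermediate chain elements automatically remain in $D$ (equivalently, keep full support), either through the relations $w\ll v_i$ produced inductively or through transitivity of $\sqsubset$ below $K(w)$. A minor point to record is that the covering equivalences may be used in both the $K$ and $K^{-1}$ forms, as justified by the remark following Proposition~\ref{krew1bis}, so no separate argument is needed for the inverse map.
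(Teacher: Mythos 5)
Your proof is correct and follows essentially the same route as the paper's: reduce to the standard parabolic subgroup $\Gamma(\overline w)$, where $w$ has full support and the relative Kreweras complement becomes the ordinary one, and then invoke Proposition~\ref{krew1bis}. The paper leaves the chain-lifting and the propagation of the full-support hypothesis implicit, whereas you spell them out; those details are exactly the right ones and check out.
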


\begin{proof}
 If $w$ has full support, i.e., $\overline w = c$, this  follows from $ii)$ of Proposition~\ref{krew1bis}.
 Otherwise, $w$ has full support as an element of the standard parabolic subgroup $\Gamma(\overline w)$.
 Note that we have
 \[
    \{ v\in  NC(W,c) \, : \, v \gg w \} \subset \Gamma(\overline w)
 \]
 by Proposition~\ref{conncomp}. Therefore we can apply Proposition~\ref{krew1bis} in the subgroup $\Gamma(\overline w)$
 which yields the result. 
\end{proof}

\begin{coro} \label{booleanideals}
Upper ideals for $\ll$ in $NC(W,c)$ are boolean posets.
\end{coro}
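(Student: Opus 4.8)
The plan is to read off this corollary directly from Proposition~\ref{booleanideals0} together with the structure of $\sqsubset$-lower ideals recorded just after Proposition~\ref{sqchar}. By Proposition~\ref{booleanideals0}, the principal upper ideal $\{v\in NC(W,c)\,:\,v\gg w\}$ equipped with $\ll$ is anti-isomorphic, via the relative Kreweras complement $v\mapsto v^{-1}\overline w$, to the principal lower ideal $\{v\in NC(W,c)\,:\,v\sqsubset w^{-1}\overline w\}$ equipped with $\sqsubset$. So it suffices to identify the latter poset and then observe that passing to its dual does not leave the class of boolean posets.

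First I would recall that, for any $u\in NC(W,c)$, the lower ideal $\{v\,:\,v\sqsubset u\}$ is a boolean lattice. This was already noted after Proposition~\ref{sqchar}: by part $iii)$ of that proposition, $v\sqsubset u$ is equivalent to $\Delta(\Gamma(v))\subset\Delta(\Gamma(u))$, and since every subset of the simple system $\Delta(\Gamma(u))$ is again a simple system (hence of the form $\Delta(\Gamma(v))$ for a unique $v$), the map $v\mapsto\Delta(\Gamma(v))$ is an order isomorphism from this ideal onto the lattice of subsets of $\Delta(\Gamma(u))$, i.e.~onto the boolean lattice of rank $\rk(u)$. Taking $u=w^{-1}\overline w$ shows that the target of the anti-isomorphism above is boolean.

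Finally I would invoke the self-duality of the boolean lattice: complementation of subsets is an order-reversing automorphism, so the dual of a boolean lattice of rank $r$ is again a boolean lattice of rank $r$. Composing the anti-isomorphism of Proposition~\ref{booleanideals0} with this complementation therefore produces an honest isomorphism from $\{v\,:\,v\gg w\}$, ordered by $\ll$, onto a boolean lattice, which is exactly the assertion. There is essentially no obstacle here; the whole content has been packaged into Proposition~\ref{booleanideals0}, and the corollary is merely the remark that the order-dual of a boolean poset is boolean.
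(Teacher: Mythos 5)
Your proposal is correct and follows exactly the paper's own argument: combine the anti-isomorphism of Proposition~\ref{booleanideals0} with the fact (noted after Proposition~\ref{sqchar}) that lower ideals for $\sqsubset$ are boolean, and conclude by self-duality of boolean lattices. The only difference is that you spell out the details more explicitly than the paper does.
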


\begin{proof}
 It was noted above the lower ideals for $\sqsubset$ are boolean. The result follows from
 the anti-isomorphism of the previous proposition, since boolean posets are anti-isomorphic to themselves.
\end{proof}

\section{Examples}\label{examples_sec}

In this section we show a few pictures of the symmetric groups which illustrate the main properties of our objects.

\subsection{\texorpdfstring{The case of  $S_3$}{The case of S 3}}
Permutations are denoted by their nontrivial cycles, $e$ is the identity element.  The Cayley graph is in Figure~\ref{figcayleyS3}, the Bruhat graph is obtained by orienting the edges downwards. The graph of the relation $\between_c$ is in Figure~\ref{figbetween3}.

\begin{figure}[h!tp]
\begin{tikzpicture}[scale=.4]
\draw (0,-2) node{$(12)$};
\draw (0,4) node{$(123)$};
\draw (8,4) node{$(132)$};
\draw (8,-2) node{$(23)$};
\draw (4,-6) node{$e$};
\draw (4,8) node{$(13)$};
\draw (0,-1.5) --(0,3.5);
\draw (.5,-1.5) --(7.5,3.5);
\draw (7.5,-1.5) --(.5,3.5);
\draw (8,-1.5) --(8,3.5);
\draw (.5,-2.5) --(3.5,-5.5);
\draw (7.5,-2.5) --(4.5,-5.5);
\draw (0,4.5) --(3.5,7.5);
\draw  (8,4.5) --(4.5,7.5);
\draw  (4,-5.5) --(4,7.5);
\end{tikzpicture}  
\caption{The Cayley graph of $S_3$.\label{figcayleyS3}}
\end{figure}

\bigskip

\begin{figure}
\begin{tikzpicture}[scale=.4]
\draw (0,-2) node{$(12)$};
\draw (0,4) node{$(123)$};
\draw (8,4) node{$(132)$};
\draw (8,-2) node{$(23)$};
\draw (4,-6) node{$e$};
\draw (4,-2) node{$(13)$};
\draw (0,-1.5) --(0,3.5);
\draw (.5,-1.5) --(7.5,3.5);
\draw (7.5,-1.5) --(.5,3.5);
\draw (8,-1.5) --(8,3.5);
\draw (.5,-2.5) --(3.5,-5.5);
\draw (7.5,-2.5) --(4.5,-5.5);
\draw [red,,dashed](0,3.5) --(3.5,-1.5);
\draw  [red,,dashed](8,3.5) --(4.5,-1.5);
\draw  (4,-5.5) --(4,-2.5);
\draw (14,-2) node{$(12)$};
\draw (18,2) node{$(123)$};
\draw (22,-2) node{$(23)$};
\draw (18,-6) node{$e$};
\draw (18,-2) node{$(13)$};
\draw (14.5,-1.5) --(17.5,1.5);
\draw (21.5,-1.5) --(18.5,1.5);
\draw (14.5,-2.5) --(17.5,-5.5);
\draw (21.5,-2.5) --(18.5,-5.5);
\draw (18,-5.5) --(18,-2.5);
\draw [red,dashed] (18,-1.5) --(18,1.5);
\end{tikzpicture}  
\caption{The orders $\sqsubset $ and $\ll$ on $S_3$ (left) and on $NC_3$ (right).\label{figsqS3}}
\end{figure}

The order $\sqsubset $ and $\ll$ on $S_3$ and on $NC_3$ are shown in Figure~\ref{figsqS3}.
The cover relations of $\sqsubsetdot$ are in black, those of $\lldot$ are in red and dashed.
Note that the underlying undirected graph on $S_3$ is the Cayley graph.  The two possible choices for the Coxeter element give isomorphic posets.

\bigskip

\begin{figure}
\begin{tikzpicture}[scale=.4]
\draw (0,0) node{$(12)$};
\draw (4,4) node{$(13)$};
\draw (8,0) node{$(23)$};
\draw (1,0) --(7,0);
\end{tikzpicture}  
\caption{The graph of the relation $\mathrel{\between_c}$ in $S_3$.\label{figbetween3}}
\end{figure}

\subsection{\texorpdfstring{The case of $S_4$}{The case of S 4}}
Here we have two nonequivalent choices for the Coxeter element: $c=(1234)$ or $c=(1342)$.

\subsubsection{\texorpdfstring{$c=(1234)$}{c=(1234)}}
The poset $NC_4$ with its cover relations drawn as above is in Figure~\ref{figsqS4_1}.  The graph of the relation $\between_c$ is in Figure~\ref{figbetween4}.

\begin{figure}[h!tp]
\begin{tikzpicture}[scale=.5]

\draw (9, 9) node{$(1234)$};
\draw (6, -9) node{$()$};

\draw (0,3) node{$(123)$};
\draw(0,-3) node{$(12)$};

\draw (3,3) node{$(12)(34)$};
\draw (3,-3) node{$(23)$};

\draw (6,3) node{$(234)$};
\draw(6,-3) node{$(34)$};

\draw (9,3) node{$(124)$};
\draw (9,-3) node{$(24)$};

\draw (12,3) node{$(134)$};
\draw(12,-3) node{$(13)$};

\draw (15,3) node{$(14)(23)$};
\draw (15,-3) node{$(14)$};

\draw (0,-2.5) --(0,2.5);
\draw (0,-2.5) --(9,2.5);
\draw (0,-2.5) --(3,2.5);

\draw (3,-2.5) --(0,2.5);
\draw (3,-2.5) --(6,2.5);
\draw (3,-2.5) --(15,2.5);

\draw (6,-2.5) --(3,2.5);
\draw (6,-2.5) --(6,2.5);
\draw (6,-2.5) --(12,2.5);

\draw [red,dashed](9,-2.5) --(6,2.5);
\draw (9,-2.5) --(9,2.5);

\draw (12,-2.5) --(12,2.5);
\draw [red,dashed](12,-2.5) --(0,2.5);

\draw [red,dashed](15,-2.5) --(12,2.5);
\draw [red,dashed](15,-2.5) --(9,2.5);
\draw (15,-2.5) --(15,2.5);

\draw (0,3.5) --(9,8.5);
\draw (3,3.5) --(9,8.5);
\draw (6,3.5) --(9,8.5);
\draw [red,dashed](9,3.5) --(9,8.5);
\draw [red,dashed](12,3.5) --(9,8.5);
\draw [red,dashed](15,3.5) --(9,8.5);
\draw (0,-3.5) --(6,-8.5);
\draw (6,-3.5) --(6,-8.5);
\draw (15,-3.5) --(6,-8.5);
\draw (3,-3.5) --(6,-8.5);
\draw (9,-3.5) --(6,-8.5);
\draw (12,-3.5) --(6,-8.5);
\end{tikzpicture}  
\caption{The order relations $\sqsubset $ and $\ll$ on $NC_4$.\label{figsqS4_1}}
\end{figure}

\begin{figure}[h!tp]
\begin{tikzpicture}[scale=.5]
\draw (0,0) node{$(12)$};
\draw (4,0) node{$(34)$};
\draw (2,2) node{$(23)$};
\draw (2,4) node{$(14)$};
\draw (-1.5,-1.5) node{$(24)$};
\draw(5.5,-1.5) node{$(13)$};
\draw (0.5,0.5) --(1.5,1.5);
\draw (2,2.5) --(2,3.5);
\draw (2.5,1.5) --(3.5,.5);
\draw (1,0) --(3,0);
\draw (-1,-1) --(-.5,-.5);
\draw (4.5,-.5) --(5,-1);
\end{tikzpicture}  
\caption{The graph of the relation $\mathrel{\between_c}$ in $S_4$.\label{figbetween4}}
\end{figure}

\subsubsection{\texorpdfstring{$c=(1243)$}{c=(1234)}}
Here we change the Coxeter element for the bipartite element $c=(12)(34)(23)=(1243)$.  The poset $NC_4$ with its cover relations drawn as above is in Figure~\ref{figsqS4_2}.  The graph of the relation $\between_c$ is in Figure~\ref{figbetween4_2}.  We see that the graphs of $\mathrel{\between_c}$ and of $\sqsubset$ for $(1234)$ and $(1243)$ are not isomorphic.  

\begin{figure}
\begin{tikzpicture}[scale=.5]

\draw (9, 9) node{$(1243)$};
\draw (6, -9) node{$e$};

\draw (0,3) node{$(123)$};
\draw(0,-3) node{$(12)$};

\draw (3,3) node{$(12)(34)$};
\draw (3,-3) node{$(23)$};

\draw (6,3) node{$(243)$};
\draw(6,-3) node{$(34)$};

\draw (9,3) node{$(124)$};
\draw (9,-3) node{$(24)$};

\draw (12,3) node{$(143)$};
\draw(12,-3) node{$(13)$};

\draw (15,3) node{$(13)(24)$};
\draw (15,-3) node{$(14)$};

\draw (0,-2.5) --(0,2.5);
\draw (0,-2.5) --(9,2.5);
\draw (0,-2.5) --(3,2.5);

\draw (3,-2.5) --(0,2.5);
\draw (3,-2.5) --(6,2.5);

\draw (6,-2.5) --(3,2.5);
\draw (6,-2.5) --(6,2.5);
\draw (6,-2.5) --(12,2.5);

\draw [red,dashed](9,-2.5) --(6,2.5);
\draw (9,-2.5) --(9,2.5);
\draw (9,-2.5) --(15,2.5);

\draw (12,-2.5) --(12,2.5);
\draw [red,dashed](12,-2.5) --(0,2.5);
\draw (12,-2.5) --(15,2.5);

\draw [red,dashed](15,-2.5) --(12,2.5);
\draw [red,dashed](15,-2.5) --(9,2.5);

\draw (0,3.5) --(9,8.5);
\draw (3,3.5) --(9,8.5);
\draw (6,3.5) --(9,8.5);
\draw [red,dashed](9,3.5) --(9,8.5);
\draw [red,dashed](12,3.5) --(9,8.5);
\draw [red,dashed](15,3.5) --(9,8.5);
\draw (0,-3.5) --(6,-8.5);
\draw (6,-3.5) --(6,-8.5);
\draw (15,-3.5) --(6,-8.5);
\draw (3,-3.5) --(6,-8.5);
\draw (9,-3.5) --(6,-8.5);
\draw (12,-3.5) --(6,-8.5);
\end{tikzpicture}  
\caption{The order relations $\sqsubset $ and $\ll$ on $NC(S_4,(1243))$.\label{figsqS4_2}}
\end{figure}

\begin{figure}[h!tp]
\begin{tikzpicture}[scale=.5]

\draw (0,0) node{$(12)$};
\draw (4,0) node{$(34)$};

\draw (2,2) node{$(23)$};
\draw (2,4) node{$(14)$};
\draw (-1.5,-1.5) node{$(24)$};
\draw(5.5,-1.5) node{$(13)$};

\draw (0.5,0.5) --(1.5,1.5);
\draw (2.2,1.5) --(3.5,.5);
\draw (-.5,-1.5) --(4.5,-1.5);
\draw (1,0) --(3,0);
\draw (-1,-1) --(-.5,-.5);
\draw (4.5,-.5) --(5,-1);
\end{tikzpicture}  
\caption{The graph of the relation $\mathrel{\between_c}$ on $S_4$.\label{figbetween4_2}}
\end{figure}

\section{Cluster complex, nonnesting partitions and Chapoton triangles}\label{cluster_sec}

\subsection{The cluster complex}\label{clustersec}

The cluster complexes were introduced by Fomin and Zelevinsky in \cite{fominzelevinsky} as dual to generalized associahedra, in relation with cluster algebras. 
In this paper we will use  the notion of $c$-clusters as defined by Reading \cite[Section~7]{reading}, following Marsh, Reineke and Zelevinsky \cite{marshreinekezelevinsky}.


Let $\Phi_{\geq -1}=(-\Delta)\cup\Pi$ be the set of {\it almost positive roots} (see \cite{fominzelevinsky}).
\begin{prop}[\cite{reading}] \label{cluster_compat}
For each $s \in S$, let $\sigma_s$ denote the bijection from $\Phi_{\geq -1}$ to itself defined by:
\[
  \sigma_s(\alpha) = 
  \begin{cases}
     \alpha & \text{ if } \alpha \in (-\Delta) \backslash \{ -r(s) \}, \\
     s(\alpha) & \text{ if } \alpha\in \Pi \cup \{ -r(s) \}.
  \end{cases}
\]
There exists a unique family of symmetric binary relations on $\Phi_{\geq -1}$ indexed by standard Coxeter elements, denoted by $\mathrel{\|_c}$, such that:
\begin{itemize}
 \item if $\alpha,\beta \in -\Delta$, then $\alpha \mathrel{\|_c} \beta$,
 \item if $\alpha\in -\Delta$ and $\beta \in \Pi$, then $\alpha \mathrel{\|_c} \beta$ if and only if $r(-\alpha) \notin \supp(r(\beta))$,
 \item if $s\in S$ is a left descent of $c$, then $\alpha \mathrel{\|_c} \beta \Longleftrightarrow \sigma_s(\alpha) \mathrel{\|_{scs}} \sigma_s(\beta)$.
\end{itemize}
\end{prop}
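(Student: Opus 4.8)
The plan is to obtain existence and uniqueness simultaneously by reading the third bullet as a terminating reduction rule. The first two bullets do not involve $c$ and already pin down $\alpha\mathrel{\|_c}\beta$ whenever one of $\alpha,\beta$ lies in $-\Delta$; together with the required symmetry they determine the relation on every pair meeting $-\Delta$, so all the content is in the third bullet. I read it as a rewriting step on states $(c,\{\alpha,\beta\})$: if $s$ is a left descent of $c$, rewrite $(c,\{\alpha,\beta\})$ as $(scs,\{\sigma_s(\alpha),\sigma_s(\beta)\})$, which is again a legal state since $scs$ is a standard Coxeter element by Lemma~\ref{scs_moves}. Call a state \emph{terminal} when $\{\alpha,\beta\}\cap(-\Delta)\neq\emptyset$, where bullets one and two assign a value.

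The crux is a termination (reachability) lemma: from any state some sequence of rewriting steps reaches a terminal one. I would prove it using the $c$-sorting word $c^\infty=s_{i_1}s_{i_2}\cdots$, whose first letter $s_{i_1}$ is a left descent. Each positive root $r(t)$ occurs as a left inversion of a prefix of $c^\infty$, at a smallest position $p(t)\ge 1$, which is finite because $c$ has finite order $h$. The step $\sigma_{s_{i_1}}$ together with the rotation $c\mapsto s_{i_1}cs_{i_1}$ decreases $p$ by one on positive roots while sending the root at position $1$, namely $r(s_{i_1})$, to $-r(s_{i_1})\in-\Delta$. Reducing along the successive initial letters therefore drives $\min\bigl(p(\alpha),p(\beta)\bigr)$ down to a terminal state in finitely many steps; this is Reading's analysis of the $c$-sorting process \cite{reading}. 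Uniqueness is then immediate: any relation satisfying the three conditions must coincide with the value read off after reduction to a terminal state.

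For existence I would define $\alpha\mathrel{\|_c}\beta$ to be the terminal value reached by reducing $(c,\{\alpha,\beta\})$, and show it does not depend on the choices of left descents along the way. By the termination lemma every reduction halts, so by Newman's lemma it is enough to establish local confluence. Here the only combinatorial input beyond termination enters: two distinct left descents $s,s'$ of $c$ commute. Indeed each simple reflection occurs once in a reduced word of $c$ and such words differ only by commutations, so distinct descents are non-adjacent sources, whence $m(s,s')=2$ and $r(s),r(s')$ are orthogonal. Consequently $s$ fixes $r(s')$ and conversely, so $\sigma_s\sigma_{s'}=\sigma_{s'}\sigma_s$, while $s'(scs)s'=s(s'cs')s$ is a common Coxeter element and $s'$ remains a left descent of $scs$ and $s$ of $s'cs'$. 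Thus the branches $(scs,\sigma_sP)$ and $(s'cs',\sigma_{s'}P)$ reconverge after one further step each; the boundary cases, where a branch is already terminal, reduce to checking that the support criterion of the second bullet is compatible with a single $\sigma_s$-step. With confluence in hand the relation is well defined, it satisfies the third bullet by construction, and it is symmetric because $\sigma_s$ acts on both coordinates and terminal values are symmetric.

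The main obstacle is the termination lemma: one must guarantee that the recursion always bottoms out at a pair meeting $-\Delta$, and this is precisely where the global structure of the standard Coxeter element—its finite order and the behaviour of its sorting word—is used. By contrast, once the commutation of distinct left descents is in hand the confluence verification is essentially formal.
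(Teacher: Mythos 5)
The paper does not actually prove this statement: it is quoted from Reading \cite{reading}, with only the remark that uniqueness relies on Lemma~\ref{scs_moves}. So your proposal has to stand on its own, and while the uniqueness half is fine (reachability of a ``terminal'' pair meeting $-\Delta$ forces the value, exactly as the paper's remark suggests), the existence half has a genuine gap. First, Newman's lemma does not apply: your rewriting system is not terminating. Every standard Coxeter element has a left descent, so every state has outgoing moves, and there are even literal loops (for $c=s_1s_2$ in $A_1\times A_1$, the $s_2$-move fixes the state $(c,\{\alpha,\beta\})$ with $\alpha,\beta$ in the $A_1$-component of $s_1$). Your termination lemma only shows that \emph{some} reduction path reaches a terminal state, not that all maximal paths are finite, which is what Newman's lemma needs. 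The one-step diamond you establish from the commutation of distinct left descents is strong enough to give confluence of the reduction relation without termination (strip lemma rather than Newman), so this part is repairable.

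The deeper problem is that confluence of the reduction is not sufficient, because terminal states are not sinks: the third bullet must hold at \emph{every} state for \emph{every} left descent, including the move $\sigma_s$ applied to a terminal pair $\{-r(s),\beta\}$ with $\beta\in\Pi\setminus\{r(s)\}$, which produces the non-terminal pair $\{r(s),s(\beta)\}$. For your candidate relation this forces the identity ``$s\notin\supp(r^{-1}(\beta))$ if and only if the reduction of $(scs,\{r(s),s(\beta)\})$ terminates with value true,'' and that is not a formal consequence of commuting descents or of confluence; it is a genuine combinatorial statement about the group (essentially Lemma~\ref{char_cluster} of this paper, equivalently the explicit characterization in Proposition~\ref{clust_pos}). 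Your phrase ``the boundary cases\ldots reduce to checking that the support criterion of the second bullet is compatible with a single $\sigma_s$-step'' only covers moves $\sigma_{s'}$ with $s'\neq s$ (where $\supp(t)\setminus\{s'\}=\supp(s'ts')\setminus\{s'\}$ does the job); the case $s'=s$ is where all the content lives and is left unaddressed. This is why the standard proofs (Marsh--Reineke--Zelevinsky in the bipartite case, Reading in general, and this paper's own Proposition~\ref{clust_pos}) establish existence by exhibiting an explicit relation and verifying the recursion against it, rather than by a pure confluence argument.
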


\begin{rema} Recall Remark \ref{onlycommutations} i) on the use of the symbol $ \mathrel{\|_c}$.
\end{rema}
The definition of $\mathrel{\|_c}$ is not fully explicit and uniqueness relies on Lemma~\ref{scs_moves}.
A more direct characterization of $\|_c$ is given in \cite{ceballoslabbestump} using the subword complex.  Another will be given in Proposition~\ref{clust_pos}.  Let us first review some results from \cite{reading,readingspeyer}.

\begin{defi}[\cite{reading}]\label{defclusterrel}
The $c$-cluster complex $\Upsilon(W,c)$ is the flag simplicial complex on the vertex set $\Phi_{\geq -1}$ 
defined by the relation $\mathrel{\|_c}$. A {\it $c$-cluster} is a maximal face  of $\Upsilon(W,c)$.
\end{defi}

This complex is pure, so that every $c$-cluster has dimension $n$.  Clearly, the map $\sigma_s$ provides an isomorphism between $\Upsilon(W,c)$ and $\Upsilon(W,scs)$ if $s\in S$ is a left descent of $c$.  So the isomorphism type of $\Upsilon(W,c)$ does not depend on $c$, see \cite{reading,readingspeyer} for details.  

The face generating function with respect to cardinality is:
\begin{align} \label{narayana_faces}
   \sum_{F\in \Upsilon(W,c) } x^{\# F} = \sum_{k=0}^n \Nar_k(W) x^{n-k}(1+x)^k.
\end{align}
In particular, taking the coefficient of $x^n$ shows that the number of clusters is $\Cat(W)$.
Moreover \eqref{narayana_faces} implies that the integers $\Nar_k(W)$ are the entries of the $h$-vector of $\Upsilon(W,c)$.

\begin{defi}
 The {\it positive part} of  $\Upsilon(W,c)$, denoted by $\Upsilon^+(W,c)$, is the flag simplicial complex with vertex set $\Pi$ and 
 compatibility relation $\mathrel{\|_c}$.
\end{defi}

Clearly, $\Upsilon^+(W,c)$ is a (full) subcomplex of $\Upsilon(W,c)$. Note that it may happen that $\Upsilon^+(W,c_1)$ and $\Upsilon^+(W,c_2)$ are not isomorphic if $c_1,c_2$ are two different standard Coxeter elements. The face generating function with respect to cardinality is:
\begin{align} \label{narayana_faces_pos}
   \sum_{F\in \Upsilon^+(W,c) } x^{\# F} = \sum_{k=0}^n \Nar^+_k(W) x^{n-k}(1+x)^k.
\end{align}
In particular, taking the coefficient of $x^n$ shows that the number of positive $c$-clusters is $\Cat^+(W)$ and 
 the integers $\Nar^+_k(W)$ are the entries of the $h$-vector of $\Upsilon^+(W,c)$.

We give a direct characterization of $\|_c$ in Proposition~\ref{clust_pos} below. It is an extension of Brady and Watt's results in the bipartite case \cite{bradywatt2} to any standard Coxeter element.  We begin with a lemma.

\begin{lemm}\label{char_cluster}
Suppose $s\in S$ is a left descent of $c$, and let $t\in T\backslash\{s\}$. Then the following two conditions are equivalent:
\begin{itemize}
 \item $\langle r(s)|r(t)\rangle \geq 0$, and $s,t$ are $c$-noncrossing (see Definition \ref{noncrois}),
 \item $s\notin \supp(sts)$.
\end{itemize}
\end{lemm}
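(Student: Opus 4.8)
The plan is to translate both conditions into statements about the positive root $r(sts)$ and then push everything into the standard parabolic subgroup $W_{\langle s\rangle}$ using the descent structure. First I would record the elementary root computation. Since $t\neq s$, the simple reflection $s$ sends the positive root $r(t)$ to another positive root, so $sts$ is the reflection with $r(sts)=s(r(t))$. Writing $r(t)=\sum_i c_i\, r(s_i)$ in the basis of simple roots and using that roots are unit vectors, $s(r(t))=r(t)-2\langle r(s)\mid r(t)\rangle\, r(s)$, so the coefficient of $r(s)$ in $r(sts)$ equals $c_s-2\langle r(s)\mid r(t)\rangle$, where $c_s\geq 0$ is the coefficient of $r(s)$ in $r(t)$. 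As $r(sts)$ is a positive root, the support condition reads
\[
  s\notin\supp(sts)\ \Longleftrightarrow\ c_s=2\langle r(s)\mid r(t)\rangle .
\]
I will use two consequences: this equation forces $\langle r(s)\mid r(t)\rangle=c_s/2\geq 0$; and, conversely, if $c_s=0$ then $\langle r(s)\mid r(t)\rangle\leq 0$, because $r(t)$ is then a nonnegative combination of simple roots other than $r(s)$, each making a nonpositive angle with $r(s)$.

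Next I would set up the transfer to $W_{\langle s\rangle}$. As $s$ is a left descent, $sc$ is the standard Coxeter element of $W_{\langle s\rangle}$, and Proposition~\ref{leftdesc} provides a lattice isomorphism $v\mapsto sv$ from $NC(W_{\langle s\rangle},sc)$ onto $\{w\in NC(W,c)\,:\,w\geq s\}$. Since $s\neq t$, both $s\cdot t$ and $t\cdot s$ are minimal factorizations (their product is not a reflection), so $s\leq st$ and $s\leq ts$ by Lemma~\ref{elemlemm}; applying the isomorphism to $w=st$ and $w=ts$, and noting $s\cdot st=t$ and $s\cdot ts=sts$, yields
\[
  st\leq c\ \Longleftrightarrow\ t\leq sc,\qquad ts\leq c\ \Longleftrightarrow\ sts\leq sc,
\]
the right-hand sides being understood inside $W_{\langle s\rangle}$ (in particular they force $t$, resp.\ $sts$, to lie in $W_{\langle s\rangle}$).

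Now I combine the two ingredients. For the implication from condition~$(A)$ to $s\notin\supp(sts)$: if $ts\leq c$ then $sts\leq sc$, hence $sts\in W_{\langle s\rangle}$ and $s\notin\supp(sts)$; if instead $st\leq c$ then $t\in W_{\langle s\rangle}$, so $c_s=0$ and $\langle r(s)\mid r(t)\rangle\leq 0$, which together with the hypothesis $\langle r(s)\mid r(t)\rangle\geq 0$ gives $\langle r(s)\mid r(t)\rangle=0=c_s/2$, so $s\notin\supp(sts)$ by the boxed equivalence. For the reverse implication: the boxed equivalence already gives $\langle r(s)\mid r(t)\rangle=c_s/2\geq 0$ and $sts\in W_{\langle s\rangle}$; invoking the classical fact that every reflection of a finite Coxeter group lies below each of its Coxeter elements (see e.g.\ \cite{armstrong}), the reflection $sts$ satisfies $sts\leq sc$ inside $W_{\langle s\rangle}$, whence $ts\leq c$ by the transfer, so $s,t$ are $c$-noncrossing.

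The root bookkeeping is routine; the content lies in the transfer via Proposition~\ref{leftdesc}. The step I expect to require the most care is exactly $sts\in W_{\langle s\rangle}\Rightarrow sts\leq sc$ in the reverse implication: membership in the parabolic does not by itself guarantee being below $sc$, and I close this gap with the standard result that every reflection is below every Coxeter element, applied inside $W_{\langle s\rangle}$.
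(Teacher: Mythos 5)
Your proof is correct and follows essentially the same route as the paper's: both hinge on Proposition~\ref{leftdesc} to translate $st\leq c$ and $ts\leq c$ into membership of $t$, respectively $sts$, in $NC(W_{\langle s\rangle},sc)$, and both rely (implicitly in the paper, explicitly in your write-up) on the fact that every reflection of $W_{\langle s\rangle}$ lies below its Coxeter element $sc$. Your only real departure is packaging the scalar-product bookkeeping into the coefficient identity $s\notin\supp(sts)\Leftrightarrow c_s=2\langle r(s)\,|\,r(t)\rangle$ --- which tacitly uses the standard fact that the support of a reflection equals the set of simple roots occurring in its positive root --- in place of the paper's four-case analysis; this is a harmless and slightly cleaner reorganization.
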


\begin{proof}
We begin by showing that the first condition implies the second one.

\noindent {\bf $\Longrightarrow$, case 1: } $st\leq c$.  By Proposition~\ref{leftdesc}, we have $t\leq sc$ and $t\in  W_{\langle s \rangle}$ therefore $s\notin \supp(t)$ and $\langle r(s) | r(t) \rangle \leq 0$.  Since we already have $\langle r(s) | r(t) \rangle \geq 0$, we get $\langle r(s) | r(t) \rangle = 0$, which means $st=ts$. 
Then $s\notin \supp(sts)$ follows from $s\notin \supp(t)$ since $sts=t$.

\noindent {\bf $\Longrightarrow$, case 2: } $ts\leq c$.  We have $\ell_T(ts)=2$ since $s\neq t$, therefore $s\leq ts$. By Proposition~\ref{leftdesc}, we get $sts \leq sc\in W_{\langle s\rangle}$.  It follows that $s\notin \supp(sts)$.

\noindent {\bf $\Longleftarrow$, case 1: } $st=ts$.  First note that we have $\langle r(s) | r(t) \rangle = 0$ in this case, so in particular it is $\geq 0$. Then, we have $sts=t$ so $s\notin \supp(t)$. It follows that $t \leq sc$ and $st\leq c$ by Proposition~\ref{leftdesc}.

\noindent {\bf $\Longleftarrow$, case 2: } $st \neq ts$.  We have $sts\in W_{\langle s \rangle} $, so $ts=s(sts) \leq c$ by Proposition~\ref{leftdesc}. Also, from $sts\in W_{\langle s \rangle } $ we get $\langle r(s) | r(sts) \rangle \leq 0$.  Since $s$ is in the orthogonal group, it preserves the scalar product and we get $\langle s (r(s)) | s (r(sts)) \rangle\leq 0$, hence  $\langle -r(s) | r(t) \rangle \leq 0$ and $\langle r(s) | r(t) \rangle \geq 0$ as needed.
\end{proof}

Using the above lemma we can now give a more explicit characterization of the relation $\mathrel{\|_c}$.

\begin{prop} \label{clust_pos}
Let $\alpha,\beta \in \Pi$, then  $\alpha \mathrel{\|_c} \beta$ if and only if the following two conditions hold:
\begin{itemize}
 \item  $r^{-1}(\alpha), \,r^{-1}(\beta)$ are $c$-noncrossing,
 \item $\langle \alpha|\beta\rangle \geq 0$.
\end{itemize}

\end{prop}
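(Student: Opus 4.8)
The statement characterizes the compatibility relation $\|_c$ on two positive roots $\alpha,\beta\in\Pi$ in terms of two conditions that are intrinsic (they mention the $c$-noncrossing property and the sign of the scalar product). Since Proposition~\ref{cluster_compat} defines $\|_c$ recursively via the maps $\sigma_s$ for left descents $s$, the natural strategy is induction on the length $\ell(c)$... wait, all standard Coxeter elements have the same length $n$, so induction should instead be on something that decreases under the moves $c\mapsto scs$. Let me think about what the right induction parameter is.
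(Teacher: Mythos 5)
This is not yet a proof: the proposal stops at the point of choosing a strategy, and the strategy it gropes toward (induction on a parameter decreasing under $c\mapsto scs$) is a dead end, as you yourself start to notice --- the moves $c\mapsto scs$ connect all standard Coxeter elements to one another, so there is no well-founded quantity to induct on. The mechanism that actually makes the recursive definition usable is the \emph{uniqueness} clause in Proposition~\ref{cluster_compat}: the family $(\mathrel{\|_c})_c$ is the unique family of symmetric relations satisfying the three listed rules. So the correct plan is not to unwind the recursion but to \emph{define} a candidate family $\mathrel{\hat\|_c}$ --- on pairs of positive roots by the two conditions in the statement, and on all other pairs by $\mathrel{\|_c}$ itself --- and then verify that this family satisfies all three rules; uniqueness then forces $\mathrel{\hat\|_c}=\mathrel{\|_c}$. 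This is what the paper does.

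Carrying that out still requires real work that your proposal does not touch. The easy part is the case $\alpha,\beta\in\Pi\setminus\{r(s)\}$ for a left descent $s$ of $c$: there $\sigma_s$ acts by $s\in O(V)$, which preserves scalar products, and $r^{-1}(s(\alpha))\,r^{-1}(s(\beta))=s\,r^{-1}(\alpha)r^{-1}(\beta)\,s$, so the $c$-noncrossing condition is preserved because $\leq$ is conjugation-invariant. The genuinely nontrivial cases are those where one of the roots is $r(s)$ or $-r(s)$, since $\sigma_s$ then moves a root across the boundary between $\Pi$ and $-\Delta$ and the two sides of the rule are stated in different terms ($s\notin\supp(\cdot)$ versus the scalar-product/noncrossing conditions). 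The paper isolates exactly this step as Lemma~\ref{char_cluster}: for a left descent $s$ of $c$ and $t\in T\setminus\{s\}$, the conjunction ``$\langle r(s)|r(t)\rangle\geq 0$ and $s,t$ are $c$-noncrossing'' is equivalent to $s\notin\supp(sts)$; its proof splits into cases according to whether $st\leq c$ or $ts\leq c$ and whether $s,t$ commute, using Proposition~\ref{leftdesc}. Without this lemma (or an equivalent substitute) the verification cannot be completed, so as it stands your proposal has identified neither the correct overall mechanism nor the key technical ingredient.
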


\begin{proof}
 Let us introduce a family of binary relations $\mathrel{\hat\|_c}$ on $\Phi_{\geq -1}$ by:
 \begin{itemize}
  \item if $\alpha,\beta\in\Pi$ one has $\alpha \mathrel{\hat\|_c} \beta$ if and only if 
   $\langle \alpha | \beta \rangle \geq 0$ and $r^{-1}(\alpha), \,r^{-1}(\beta)$ are $c$-noncrossing,
  \item otherwise, $\alpha \mathrel{\hat\|_c} \beta$ if and only if $  \alpha \mathrel{\|_c} \beta$.
 \end{itemize}
We  show that the relations $\mathrel{\hat\|_c}$ satisfy the rules of $\mathrel{\|_c}$ given in Definition~\ref{cluster_compat}. By  uniqueness it  follows that
$\mathrel{\hat\|_c} \; = \; \mathrel{\|_c}$.
The first two points in Definition~\ref{cluster_compat} are obviously satisfied by the family $\mathrel{\hat\|_c}$ therefore we just need 
 to show that they satisfy the third point. Let $s$ be a left descent of $c$ and $\alpha,\beta\in \Phi_{\geq -1}$.

\noindent {\bf Case 1: } $\alpha,\beta\in (-\Delta)\backslash\{-r(s)\} $.
We have then $\sigma_s(\alpha)=\alpha$, $\sigma_s(\beta)=\beta$, it follows that $\alpha \mathrel{\hat\|_c} \beta$ and $\sigma_s(\alpha) \mathrel{\hat\|_{scs}} \sigma_s(\beta)$
both hold.

\noindent {\bf Case 2: } $\alpha\in (-\Delta)\backslash\{r(s)\},\ \beta =-r(s)$. One has $\alpha \mathrel{\hat\|_c} \beta$, $\sigma_s(\alpha)=\alpha$, $\sigma_s(\beta)=-\beta>0$ and $-\alpha\notin\supp (-\beta)=\{s\}$ therefore
$\sigma_s(\alpha)\mathrel{\hat\|_{scs}}\sigma_s(\beta)$.

\noindent {\bf Case 3: } $\alpha,\beta\in \Pi\backslash\{r(s)\} $.
In this case, one has  $\sigma_s(\alpha) = s(\alpha)$ and $\sigma_s(\beta) = s( \beta)$. Since $s\in O(V)$ preserves the scalar product, $\langle \alpha| \beta \rangle =\langle \sigma_s(\alpha) | \sigma_s(\beta) \rangle $, moreover $r^{-1}(\sigma_s(\alpha)) = s  r^{-1}(\alpha) s $ and $r^{-1}(\sigma_s(\beta)) = s  r^{-1}(\beta) s $. Therefore we have
\[
  r^{-1}(\sigma_s(\alpha)) r^{-1}(\sigma_s(\beta)) = s r^{-1}(\alpha)  r^{-1}(\beta) s,
\] 
and the conclusion follows since $\leq$ is invariant under conjugation.
 
\noindent {\bf Case 4: } $\alpha=-r(s),\beta\in \Pi\backslash\{r(s)\} $. This case follows from Lemma~\ref{char_cluster} putting $s=r^{-1}(\alpha)$ and $t=sr^{-1}(\beta)s$.

\noindent {\bf Case 5: } $\alpha=r(s),\beta\in \Pi\backslash\{r(s)\} $. Again use  Lemma~\ref{char_cluster} for $s=r^{-1}(\alpha)$ and $t=r^{-1}(\beta)$.

Since the relations $\mathrel{\|_c}$ and $\mathrel{\hat\|_c}$ are symmetric we have covered all cases.
\end{proof}

By Proposition~\ref{reading1prodsimples}, if $w\leq c$ then $w$ is a standard Coxeter element of $\Gamma(w)$ so that
the complex $\Upsilon(\Gamma(w),w)$ is well defined.  An interesting consequence of Proposition~\ref{clust_pos} is that 
for $t_1,t_2\in \Pi(\Gamma(w))$, we have $\{t_1,t_2\}\in \Upsilon(\Gamma(w),w)$ if and only if $\{t_1,t_2\}\in \Upsilon(W,c)$.
This will be used in Proposition~\ref{face_to_cluster} to show that any positive face in $\Upsilon(W,c)$ is a cluster in 
$\Upsilon(\Gamma(w),w)$ for some $w\leq c$, a result which is easily extended to all faces.

Note  the similarity between Proposition~\ref{clust_pos} and Definition~\ref{defi_Xi}
therefore between the two flag simplicial complexes $\Upsilon^+(W,c)$ and $\Xi(W,c)$.
Their vertex sets are respectively $T$ and $\Pi$, and are in bijection via the map $r$, 
besides this the only difference is the required sign of the scalar product. Despite this they have very different properties:
unlike $\Upsilon^+(W,c)$, the simplicial complex $\Xi(W,c)$ is not pure and its topology can be rather complicated.

Eventually, the following result will be useful:

\begin{prop}\label{indexingface}
If $F \in \Upsilon^+(W,c)$, there is an indexing $F=\{ \alpha_1 , \dots , \alpha_k \}$ such that $r(\alpha_1)\cdots r(\alpha_k) \leq c$.
\end{prop}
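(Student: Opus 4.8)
The plan is to reduce the statement to a purely order-theoretic problem by means of Proposition~\ref{clust_pos}, and then to imitate the surjectivity argument in the proof that $w\mapsto S(\Gamma(w))$ is a bijection onto $\Xi(W,c)$. Write $t_i=r^{-1}(\alpha_i)$ for the reflection attached to $\alpha_i$. Since $F$ is a face of $\Upsilon^+(W,c)$, Proposition~\ref{clust_pos} says that the $t_i$ are pairwise $c$-noncrossing and that $\langle \alpha_i\mid\alpha_j\rangle\geq 0$ for all $i,j$; note that two distinct reflections commute exactly when their roots are orthogonal, so here $t_i,t_j$ fail to commute precisely when $\langle\alpha_i\mid\alpha_j\rangle>0$. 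What must be produced is an ordering with $t_1\cdots t_k\leq c$.

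First I would introduce a directed graph $G$ on the vertex set $\{t_1,\dots,t_k\}$, with an edge $t_i\to t_j$ whenever $t_i,t_j$ do not commute and $t_it_j\leq c$. This is well defined: the $c$-noncrossing hypothesis guarantees that at least one of $t_it_j\leq c$, $t_jt_i\leq c$ holds, while $ii)$ of Remark~\ref{onlycommutations} guarantees that exactly one holds when $t_i,t_j$ do not commute. Granting that $G$ is acyclic, one chooses a linear order refining $G$; then $t_it_j\leq c$ for all $i<j$ (for a commuting pair the two products coincide, so the common value is $\leq c$). From this the relation $t_1\cdots t_k\leq c$ follows by induction on $k$, along the lines of the $k=3$ case in the $\Xi(W,c)$ proof: the sub-face $\{t_2,\dots,t_k\}$ gives $t_2\cdots t_k\leq c$ by induction, this product is the join of $t_2,\dots,t_k$ in the lattice $NC(W,c)$ (comparing parabolic subgroups via $\Gamma$), the relations $t_1t_j\leq c$ yield $t_j\leq t_1c$ and hence $t_2\cdots t_k\leq t_1c$, whence $t_1\cdots t_k\leq c$ by Proposition~\ref{propelem}. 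This step is insensitive to the sign of the scalar products.

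The \textbf{main obstacle} is the acyclicity of $G$, and it genuinely requires the hypothesis $\langle\alpha_i\mid\alpha_j\rangle\geq 0$. Indeed $\{(1,2),(2,3),(1,3)\}$ in $S_3$ with $c=(1,2,3)$ is pairwise $c$-noncrossing, and $(1,2)(2,3)\leq c$, $(2,3)(1,3)\leq c$, $(1,3)(1,2)\leq c$ form a directed triangle, yet this is \emph{not} a face of $\Upsilon^+$ since $\langle r((1,2))\mid r((2,3))\rangle<0$. To establish acyclicity I would first treat the bipartite case, where the result is essentially due to Brady and Watt~\cite{bradywatt2}: there $c$ acts on the Coxeter plane as a rotation, the relation $t_it_j\leq c$ is controlled by the angular positions of $\alpha_i,\alpha_j$, and the nonnegativity of the scalar products confines them to less than a half-turn, so that $G$ inherits the linear order of angles.

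It then remains to transfer the statement to an arbitrary standard Coxeter element $c$ along the moves $c\leftrightarrow scs$ for $s$ a left descent, which connect all standard Coxeter elements by Lemma~\ref{scs_moves}. The map $\sigma_s$ of Proposition~\ref{cluster_compat} is an involution intertwining $\mathrel{\|_c}$ and $\mathrel{\|_{scs}}$. When $r(s)\notin F$ it sends $F$ to the positive face $s(F)$ of $\Upsilon^+(W,scs)$, and since $r^{-1}(s\alpha)=s\,r^{-1}(\alpha)\,s$, the conjugation-invariance of $\leq$ (Proposition~\ref{propelem}) gives $t_1\cdots t_k\leq c\iff (st_1s)\cdots(st_ks)\leq scs$, so the statement for $c$ and for $scs$ are equivalent. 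The delicate case is $r(s)\in F$, where $\sigma_s$ produces the negative simple root $-r(s)$ and $s(F)$ leaves $\Upsilon^+$; here the link of $r(s)$ is the cluster complex of the parabolic subgroup $W_{\langle s\rangle}$, so that writing $s$ as the first factor (via Proposition~\ref{leftdesc}) reduces the remaining roots to a positive face of that smaller group. This last reduction, which grounds an induction on the rank $n$, is where the real technical work concentrates.
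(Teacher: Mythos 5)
Your proposal is correct in every step that can be checked against the paper, and in outline it coincides with the paper's second (``alternative'') proof of this proposition: establish the bipartite case from Brady and Watt~\cite{bradywatt2}, then transfer along the moves $c\to scs$ of Lemma~\ref{scs_moves}. (The paper's first proof instead reduces to the case $k=n$ with $r(\alpha_1)\cdots r(\alpha_n)=c$ and cites \cite[Proposition~2.8]{ceballoslabbestump}.) What you add --- and what the paper omits entirely --- is the intermediate machinery: the tournament $G$ on $F$, well defined thanks to Remark~\ref{onlycommutations}; the observation that its acyclicity genuinely requires $\langle\alpha_i\mid\alpha_j\rangle\geq 0$, with your $S_3$ triangle being exactly the right counterexample; and the join-based induction passing from the pairwise relations $t_it_j\leq c$ for $i<j$ to the global relation $t_1\cdots t_k\leq c$, which correctly mirrors the surjectivity step in the proof that $w\mapsto S(\Gamma(w))$ is a bijection onto $\Xi(W,c)$. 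The points you leave open are precisely the ones the paper delegates to references or omits (``we omit details''), so there is no gap in your argument that is not also present in the paper's.

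One remark on the step you flag as hardest, the case $r(s)\in F$ of the transfer: it is more tractable than you suggest, and your graph $G$ is the right tool for it. If $s$ is a left descent of $c$ and $r(s)\in F$, then Lemma~\ref{char_cluster} gives $sts\in W_{\langle s\rangle}$ for every other $t\in r^{-1}(F)$, and its proof shows that a non-commuting such $t$ satisfies $ts\leq c$ but not $st\leq c$; hence $s$ is a \emph{sink} of $G$ and must be placed last in the factorization, not first (compare $c=(1,3)(1,2)$ for $c=s_1s_2$ in $S_3$). The conjugates $sts$ form a positive face for the standard Coxeter element $sc$ of $W_{\langle s\rangle}$, induction on the rank yields an ordering of them with product $\leq sc$, conjugating back by $s$ gives $t_1\cdots t_{k-1}\leq cs$, and finally $t_1\cdots t_{k-1}s\leq c$ follows from Proposition~\ref{propelem} applied to $t_1\cdots t_{k-1}\leq cs\lessdot c$. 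With this the induction on rank closes and your argument is complete.
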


\begin{proof}
 It is sufficient to check this statement for $k=n$ and $r(\alpha_1) \cdots r(\alpha_n) = c$.  This case follows from \cite[Proposition~2.8]{ceballoslabbestump}. 
 
 Alternatively, this follows from the work of Brady and Watt~\cite{bradywatt2}. In the case where $c$ is a bipartite Coxeter element, \cite[Note~3.3]{bradywatt2} implies the result. It remains to show that this is preserved under the moves $c\to scs$ as in Lemma~\ref{scs_moves}. We omit details.
\end{proof}

\subsection{\texorpdfstring{The relation $ \mathrel{\|_c}$ and the orders $\sqsubset$ and $\ll$}{The relation || and the orders [ and <<}}
In the simply laced case (i.e.~types $A_n$, $D_n$ and $E_n$ in the classification), using Proposition~\ref{clust_pos}  the compatibility relation $\|_c$ on $\Pi$ can be completely rephrased in terms of the orders $\sqsubset$ and $\ll$, without
using roots or even the group structure. This relies on three observations:
\begin{itemize}
 \item Each rank 2 parabolic subgroup has type $A_2$ or $A_1^2$ in the classification. For any pair of distinct reflections in such a subgroup,
       we can take the product in some order to get a given Coxeter element. It means that the condition $tu\leq c$ or $ut\leq c$ for $t,u\in T$ is
       equivalent to the existence of $v\leq c$ such that $\rk(v)=2$, and $t\leq v$, $u\leq v$.
 \item The condition $\langle r(t) | r(u) \rangle > 0 $ means that $\{t,u\}$ is not the simple system of $\Gamma(v)$, so it is equivalent
       to the fact that either $t\ll v$ or $u\ll v$ (with $v$ as above).
 \item The condition $\langle r(t) | r(u) \rangle = 0 $ means $tu=ut$, it is equivalent to the fact that the interval $[e,tu]_\leq$
       contains only $e$, $t$, $u$, $tu$. 
\end{itemize}
It follows that $\alpha \mathrel{\|_c} \beta $ is equivalent to:
\begin{itemize}
 \item $r^{-1}(\alpha)$ and $r^{-1}(\beta)$ have a least upper bound $v$ of rank $2$,
 \item Either $[e,v]_{\leq} = \{e,r^{-1}(\alpha),r^{-1}(\beta),v\}$, or $r^{-1}(\alpha)\ll v$ or $r^{-1}(\beta)\ll v$.
\end{itemize}

\subsection{Nonnesting partitions}

When $W$ is a Weyl group, we have an associated root poset, defined in terms of the {\it crystallographic root system} of $W$.  We defined roots to be unit vectors, which is not convenient for crystallographic root system, so we only sketch the definition here and refer to \cite[Chapter~2.9]{humphreys}.
The idea is to allow to have roots of different lengths, so we consider positive numbers $(a_\alpha)_{\alpha\in\Pi}$ and assume that the vectors $a_\alpha \alpha$ satisfies, for all $\alpha,\beta\in\Pi$:
\[
   \frac{2 \langle a_\alpha \alpha | a_\beta \beta \rangle}{\langle a_\beta \beta | a_\beta \beta \rangle} \in \mathbb{Z}.
\]

\begin{defi}
 The partial order $\preccurlyeq$ on $\Pi$ is defined by $\alpha \preccurlyeq \beta$ if $a_\beta \beta- a_\alpha\alpha$ is a linear combination of simple roots with nonnegative coefficients and $(\Pi,\preccurlyeq)$ is called the {\it root poset} of $W$.  We denote $ NN(W)$ the set of antichains of $(\Pi,\preccurlyeq)$, and such an antichain is called a {\it nonnesting partition}.
\end{defi}

Note that in the case where $W$ is not a Weyl group (i.e.~one of $I_2(m)$, $H_3$, and $H_4$ in the classification),
there are {\it ad hoc} definitions of posets having some of the expected properties of a root poset.  However this fully works only for $I_2(m)$ and $H_3$.
We don't discuss that in more details, see \cite[Section~5.4.1]{armstrong} and \cite{cuntzstump}.

Just as $ NC(W,c)$ and $\Upsilon(W,c)$, the set $ NN(W)$ is a flag simplicial complex.
Its vertex set is $\Pi$, and two vertices are compatible if they are not comparable in the root poset.
This complex is not pure.

\begin{defi}
The {\it support} of a nonnesting partition $A$ is:
\[
  \supp(A) = \{ \delta \in \Delta \, : \,  \exists \alpha \in A, \; \delta \preccurlyeq \alpha \}.
\]
\end{defi}

It is similar to the notion of support a group element $w\in W$, so there should be no confusion.
For example, note that for $A\in NN(W)$, we have $s\notin\supp(A)$ if and only if $\pi\in NN(W_{\langle s \rangle})$.

\subsection{Enumeration of full reflections}
\label{sec_fullrefs}

The number of $t\in T$ with full support is $\Nar^+_1(W)$.  Chapoton \cite{chapoton2} obtained the formula
\begin{equation}  \label{nb_reffull}
  \Nar_1^+(W) = \frac{nh}{|W|}\prod_{i=2}^n (e_i-1)
\end{equation}
by a case by case verification, and he also conjectured a representation theoretical interpretation.  We show that this formula can be obtained from the properties of $\ll$. 

By Proposition~\ref{llfull}, a reflection $t \in T$ is full if and only if $t\ll c$, which is equivalent to the existence of a maximal chain
\begin{align} \label{maxchains_reffull}
   e \sqsubsetdot w_1 \lldot w_2 \lldot \dots \lldot w_n = c
\end{align}
with $t=w_1$ in $\ NC(W,c)$.  Note that these maximal chains are those with the maximal number of $\lldot$, as we have $e\sqsubsetdot t$ for $t\in T$.
So their number is the dominant coefficient in \eqref{enum_maxchains}. Noticing that $d_n+q(h-d_n)=h$ is a constant, this dominant coefficient is:
\begin{align} \label{maxchains_reffull2}
  \frac{n!\times h}{|W|} \prod_{i=1}^{n-1} ( h-d_i ).
\end{align}
The number of maximal chains as in \eqref{maxchains_reffull} is also $(n-1)!$ times the number of full reflections: for a given $w_1$ the possible choices for $w_2,\dots,w_n$ are the maximal chains in $[w_1,c]_\ll$ which is a boolean lattice of rank $n-1$ by Corollary~\ref{booleanideals} and their number is $(n-1)!$.  To get the right hand side of \eqref{nb_reffull} from \eqref{maxchains_reffull2} divided by $(n-1)!$, it remains only to use the equalities $d_i=e_i+1$, $h-e_i=e_{n+1-i}$.

 Fomin and Reading in \cite[Section~13.4]{fominreading} asked for  a better combinatorial way to relate full reflections with objects counted by Fuß-Catalan numbers ($k$-noncrossing partitions, generalized clusters, see~\cite{armstrong}).  Our derivation of the formula does not give a full answer but recasts the problem in a more general form: have a better combinatorial way to relate Fuß-Catalan objects and the generating function $M(w,q)$, thus explaining the relation~\eqref{maxchain_fusscat}.

\subsection{Chapoton triangles}

\label{triangles}

The $F$-, $M$- and $H$-triangles are polynomials in two variables defined respectively in terms of $\Upsilon(W,c)$, 
the Möbius function of $ NC(W,c)$, and $ NN(W)$. The $F=M$ and $H=M$ theorems state that these polynomials are related 
to each other by invertible substitutions (there is an $F=H$ theorem as an immediate consequence of the other two). 
They were conjectured by Chapoton in \cite{chapoton2} and \cite{chapoton1}, respectively, 
and proved by Athanasiadis~\cite{athanasiadis} and Thiel~\cite{thiel}, respectively. 
See also \cite{armstrong} for generalizations.

\begin{defi}
Let $\mu$ denote the Möbius function of $ NC(W,c)$. The $M$-triangle is the polynomial:
\begin{equation*}
  M(x,y) = \sum_{\substack{ \alpha,\beta \in  NC(W,c)  \\  \alpha \leq \beta }} \mu(\alpha,\beta) x^{\rk(\alpha)} (-y)^{\rk(\beta)-\rk(\alpha)}.
\end{equation*}
The $F$-triangle is the polynomial:
\begin{equation*}
  F(x,y) = \sum_{F\in \Upsilon(W,c)} x^{\# ( F \cap ( -\Delta ) )} y^{ \#(F \cap \Pi)}.
\end{equation*}
The $H$-triangle is the polynomial:
\begin{equation*}
  H(x,y) = \sum_{ A \in  NN(W) } x^{\#(A\cap \Delta)} y^{\#(A \cap (\Pi\backslash\Delta))}.
\end{equation*}
\end{defi}

There exist slightly different conventions in the literature for these polynomials. Here and also for related polynomials in the sequel, 
we always take a convention ensuring that they have nonnegative integer coefficients, and total degree $n$.

The $F=M$ and $H=M$ theorems relates the three polynomials as follows:
\begin{align} \label{eqFHM}
             F(x,y) &= (1+y)^n H\Big( \tfrac{x}{1+y} , \tfrac{y}{1+y} \Big) =(1+x)^n M\Big( \tfrac{x}{1+x} , \tfrac{y-x}{1+x} \Big), \\
             H(x,y) &= (1-y)^n F \Big( \tfrac{x}{1-y} , \tfrac{y}{1-y} \Big) = (1+x-y)^n M\Big( \tfrac{x}{1+x-y}, \tfrac{y-x}{1+x-y} \Big), \\
\label{MFH}  M(x,y) &= (1+y)^n H \Big( \tfrac{x}{1+y} , \tfrac{y+x}{1+y} \Big) = (1-x)^n F \Big( \tfrac{x}{1-x} , \tfrac{y+x}{1-x} \Big).
\end{align}
See \cite{athanasiadis,chapoton1,chapoton2,thiel} for details.  Note that all these relations suggest  considering 
homogeneous polynomials in three variables, rather than only two variables.  Then the relations become mere shifts of the variables.
This idea will be used in Section~\ref{sec_genFHM}.
%

Also, the polynomials satisfy a symmetry property.  The self-duality of $ NC(W,c)$ gives
\begin{equation}
   M(x,y) = x^n M(\tfrac 1x , \tfrac 1y ).
\end{equation}
and, using \eqref{eqFHM},
\begin{equation} \label{symm_FH}
   F(x,y) = (-1)^n F(-1-x,-1-y), \quad H(x-1,y) = y^n H(\tfrac yx -1 , \tfrac 1y ).
\end{equation}

From the definition, it can be seen that these polynomials contains Catalan, positive Catalan, Narayana, and positive Narayana numbers as special cases.
Let us also mention another interesting specialization: $H(-1,1)$ is the {\it double-positive Catalan number} introduced in \cite{barnardreading},
more generally the coefficients of $H(-1,y)$ are the {\it double-positive Narayana numbers}.  See \cite[Proposition~4.6]{barnardreading} for the interpretation in terms of nonnesting partitions. Using \eqref{eqFHM}, we have
\[
  H(-1,y) = (1-y)^n F\big( \tfrac{-1}{1-y} , \tfrac{1}{1-y} \big)
\]
and the latter expression can be interpreted as the {\it local $h$-polynomial} of $\Upsilon^+(W,c)$, see \cite[Remark~4.7]{barnardreading} and Athanasiadis and Savvidou~\cite{athanasiadissavvidou}.

\section{\texorpdfstring{Intervals for $\sqsubset$ and $\ll$ and the cluster complex}{Intervals for [ and << and the cluster complex}}
\label{interval_cluster_sec}

\subsection{\texorpdfstring{Intervals for $\sqsubset$}{Intervals for [}}
\subsubsection{Counting intervals}
If $v,w\in NC(W,c)$ and $v\sqsubset w$, we denote by $[v,w]_{\sqsubset}$ the interval: 
\[
  [v,w]_{\sqsubset} = \big\{ x \in NC(W,c) \; : \; v\sqsubset x\sqsubset w   \big\}.
\]
Define the {\sl height} of an interval $[v,w]_{\sqsubset}$ as $rk(w)-rk(v)$.

The identity $e$ is the smallest element in $NC(W,c)$ for $\sqsubset$. 
Moreover, if $w$ has rank $r$ we have seen that the interval $[e,w]_\sqsubset$ is a boolean lattice with $2^r$ elements. 
Therefore, if $r\geq k$  the number of elements $v\sqsubset w$ with $\rk(w)-\rk(v)=k$ is equal to $\binom{r}{k}$. 
Since the number of elements of $NC(W,c)$ of rank $r$ is $\Nar_r(W)$, we have the following (here $n$ is the rank of $W$).

\begin{prop} 
The number of intervals of height $k$  for $\sqsubset$ is equal to
\begin{equation}\label{intsqsub}
  \sum_{r=k}^n\Nar(W,r)\binom{r}{k}.
\end{equation}
\end{prop}

In the case of the symmetric group $\mathfrak{S}_n$, the total number of intervals for $\sqsubset$ is the {\it small Schröder number} $s_n$, defined by
\[
  \sum_{n\geq 0} s_n z^n  =  \frac{1+x-\sqrt{1-6x+x^2}}{4x}.
\]

\subsubsection{Relation with the cluster complex and the associahedron}
We now relate $( NC(W,c),\sqsubset)$ with the cluster complex $\Upsilon(W,c)$ and the associahedron, a polytope whose face complex is dual to $\Upsilon(W,c)$~\cite{fominzelevinsky}.  It is known that the $h$-vector of the associahedron is equal to the sequence of $W$-Narayana numbers, see~\cite{athanasiadisbradymaccammondwatt} (or \eqref{narayana_faces}, using the duality). The next statement follows readily from \eqref{intsqsub} and the relation between the $f$- and  $h$-vectors.

\begin{prop} \label{clust_sqsub}
The number of intervals of height $k$ for $\sqsubset$ is equal to the  number of faces of dimension $k$ of the $W$-associahedron, or to the number of faces of cardinality $n-k$ of the cluster complex.
\end{prop}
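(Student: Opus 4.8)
The plan is to combine the count \eqref{intsqsub} of intervals of height $k$ with the face generating function \eqref{narayana_faces} of the cluster complex, and then to read off the number of faces of a fixed cardinality. First I would recall that by the preceding proposition the number of intervals of height $k$ for $\sqsubset$ equals $\sum_{r=k}^{n}\Nar_r(W)\binom{r}{k}$. Thus it suffices to show that this quantity is exactly the number of faces of $\Upsilon(W,c)$ of cardinality $n-k$, after which the passage to the associahedron is a matter of duality.

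Next I would extract the $f$-vector of the cluster complex directly from \eqref{narayana_faces}. Since the left-hand side $\sum_{F}x^{\#F}$ is the generating function of the faces of $\Upsilon(W,c)$ by cardinality, the number of faces of cardinality $m$ is the coefficient of $x^{m}$ in $\sum_{j=0}^{n}\Nar_j(W)\,x^{n-j}(1+x)^{j}$. Expanding $(1+x)^{j}$ by the binomial theorem rewrites this as $\sum_{j}\sum_{i=0}^{j}\Nar_j(W)\binom{j}{i}x^{\,n-j+i}$, and collecting the coefficient of $x^{\,n-k}$ forces $i=j-k$, which yields $\sum_{j=k}^{n}\Nar_j(W)\binom{j}{k}$. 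This is precisely the right-hand side of \eqref{intsqsub}, so the number of intervals of height $k$ for $\sqsubset$ equals the number of faces of the cluster complex of cardinality $n-k$.

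Finally, to express the same number in terms of the associahedron I would invoke the duality between $\Upsilon(W,c)$ and the $W$-associahedron \cite{fominzelevinsky}. The cluster complex is the boundary complex of a simplicial $n$-polytope whose polar dual is the (simple) associahedron; under the resulting anti-isomorphism of face lattices, a face of $\Upsilon(W,c)$ of cardinality $n-k$, viewed as a simplex of dimension $n-k-1$, corresponds to a face of the associahedron of dimension $n-(n-k)=k$, a cluster corresponding to a vertex. This identifies the three counts. Equivalently, the whole argument can be phrased through the transition between the $f$- and $h$-vectors: equation \eqref{narayana_faces} says exactly that the $h$-vector of the associahedron is the Narayana sequence $(\Nar_k(W))_k$, and the coefficient extraction above is nothing but the standard formula recovering the $f$-vector from the $h$-vector.

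The computation is entirely routine, so I do not expect any genuine obstacle; the only points requiring care are the bookkeeping of the index shift in the coefficient extraction (matching cardinality $n-k$ with the term $i=j-k$) and being consistent about the dimension convention under duality, namely that a simplex of cardinality $n-k$ on the simplicial side corresponds to a face of dimension $k$ of the associahedron. As the excerpt already signals, the statement follows readily once these identifications are made explicit.
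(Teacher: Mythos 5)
Your proposal is correct and follows the same route as the paper, which simply notes that the statement follows from \eqref{intsqsub} together with the standard $f$-/$h$-vector relation and the fact (via \eqref{narayana_faces}) that the $h$-vector of the associahedron is the Narayana sequence. Your coefficient extraction and the dimension bookkeeping under duality are exactly the details the paper leaves implicit.
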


For  particular values of $k$ there are bijective proofs of the equality in Proposition~\ref{clust_sqsub}.

\begin{itemize}
 \item For $k=n$: there is a unique interval of height $n$ which is $ INT(W,c)$ and a unique face of dimension $n$ in the associahedron.

 \item For $k=0$: the number of vertices of the associahedron is equal to number of non-crossing partitions, or $W$-Catalan number, which is also the number of intervals of height $0$ for $\sqsubset$. Bijective proofs of this fact have appeared in the litterature, see Section \ref{secbij}.

 \item For $k=n-1$: the  number of intervals of height $n-1$ for $\sqsubset$ is equal to the number of almost positive roots, which is the number of vertices of the cluster complex. Indeed this is easy to check directly since these intervals fall into two categories: 

 \begin{itemize}
 \item the intervals $[e,\pi]$ where $\pi\in NC(W,c)$ has rank $n-1$. The Kreweras complement gives a bijection between this  set and the elements of rank $1$ in $NC(W,c)$ which are the reflections, in bijection with the positive roots.
 \item the intervals 
 $[s,c]_\sqsubset$ where $s\in S$ which are in bijection with the negative simple roots.
 \end{itemize}
\end{itemize}
Observe that the study of the order $\sqsubset$ leads naturally to the notion of almost positive root, which is fundamental to the theory of clusters.

It would be interesting to give a bijective proof of Proposition \ref{clust_sqsub} for other values of $k$. In the case of a bipartite Coxeter element, we will give,  in Section~\ref{bijsqsub}, a bijection between faces of size $n-k$ of the cluster complex and intervals of height $k$ for $\sqsubset $.

\subsection{\texorpdfstring{Intervals for $\ll$}{Intervals for <<}}

In the case of the symmetric group $\mathfrak{S}_n$ with the standard Coxeter element $c=(1,2,\dots,n)$, the number of intervals $v\ll w$ was computed in \cite{nica} and \cite{petrullo}, it is the so-called {\it large Schröder number} $S_{n-1}$ defined by
\[
  \sum_{n\geq 0} S_n z^n  =  \frac{1-x-\sqrt{1-6x+x^2}}{2x}.
\]
They are related to small Schröder numbers by $S_n = 2s_n$ if $n\geq 1$.

In the general case we will again show a connection with the cluster complex.  Refining the enumeration of intervals, we consider a two variable polynomial:
\begin{equation} \label{defI}
   I(x,y) = \sum_{\substack{ v,w \in  NC(W,c)  \\  v \ll w }} x^{\rk(v)} y^{\rk(w)-\rk(v)}.
\end{equation}
It turns out to be related with the polynomial $M(x,y)$ as follows.

\begin{theo} \label{theoIeqM}
We have:
\begin{align} \label{IeqM}
    I(x,y) = M(x-y,y), \qquad M(x,y) = I(x+y,y).
\end{align}
\end{theo}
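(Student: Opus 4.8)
The plan is to establish the equivalent form $M(x,y)=I(x+y,y)$ and to reduce it, by a binomial expansion exploiting the boolean structure of the $\sqsubset$-ideals, to a single combinatorial identity for the Möbius function of $NC(W,c)$. Recall (from the discussion following Proposition~\ref{sqchar}) that for every $v\in NC(W,c)$ the $\sqsubset$-ideal $\{u:u\sqsubseteq v\}$ is a boolean lattice of rank $\rk(v)$, so the number of $u\sqsubseteq v$ with $\rk(u)=j$ is $\binom{\rk(v)}{j}$, and hence
\begin{equation}\label{binexp}
 (x+y)^{\rk(v)}=\sum_{u\sqsubseteq v}x^{\rk(u)}y^{\rk(v)-\rk(u)}.
\end{equation}
Substituting \eqref{binexp} into the definition \eqref{defI} of $I$ and interchanging the summations gives
\[
 I(x+y,y)=\sum_{u\sqsubseteq v\ll w}x^{\rk(u)}y^{\rk(w)-\rk(u)}
         =\sum_{u\le w}\#\{v:u\sqsubseteq v\ll w\}\,x^{\rk(u)}y^{\rk(w)-\rk(u)}.
\]
Comparing the coefficient of $x^{\rk(u)}y^{\rk(w)-\rk(u)}$ with the corresponding one in $M(x,y)=\sum_{u\le w}\mu(u,w)x^{\rk(u)}(-y)^{\rk(w)-\rk(u)}$, the theorem becomes equivalent to the identity
\begin{equation}\label{keyid}
 \#\{v\in NC(W,c):u\sqsubseteq v\ll w\}=(-1)^{\rk(w)-\rk(u)}\mu(u,w),
\end{equation}
to be proved for all $u\le w$ in $NC(W,c)$.

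Next I would simplify \eqref{keyid}. Since the orders $\sqsubset$ and $\ll$ restricted to $\{x:x\le w\}$ depend only on the Bruhat graph of $\Gamma(w)$ (the locality noted after the counterexample \eqref{counter}, via Proposition~\ref{dyerprop}), and since $\mu(u,w)$ is computed in the interval $[u,w]\subseteq NC(\Gamma(w),w)$, both sides of \eqref{keyid} may be evaluated inside $\Gamma(w)$. Taking $\Gamma(w)$ as the ambient group reduces \eqref{keyid} to the case $w=c$, that is $\#\{v:u\sqsubseteq v\ll c\}=(-1)^{n-\rk(u)}\mu(u,c)$. On the right-hand side the Kreweras complement gives an order-reversing isomorphism $[u,c]\to[e,K(u)]$, whence $\mu(u,c)=\mu(e,K(u))$; since $[e,K(u)]\cong NC(\Gamma(K(u)),K(u))$ and the Möbius number of a noncrossing-partition lattice of rank $m$ equals $(-1)^{m}$ times its positive Catalan number, the right-hand side equals $\Cat^+(\Gamma(K(u)))$, the number of full-support elements of $\Gamma(K(u))$. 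Thus \eqref{keyid} asserts that $\{v:u\sqsubseteq v\ll c\}$ and the full-support elements of $\Gamma(K(u))$ are equinumerous.

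To prove \eqref{keyid} I expect a Möbius-inversion argument to be cleanest. Writing $g(u,w)=\#\{v:u\sqsubseteq v\ll w\}$, one has $g(u,u)=1$ (by Proposition~\ref{llfull}, $v\ll u$ together with $u\sqsubseteq v$ forces $v=u$), so it suffices to check that $g$ obeys the recursion that characterizes $(-1)^{\rk(w)-\rk(u)}\mu(u,w)$, namely $\sum_{u\le w\le\beta}(-1)^{\rk(w)-\rk(u)}g(u,w)=[\,u=\beta\,]$ for every $\beta\ge u$. Exchanging the two summations turns the left-hand side into $\sum_{v}(-1)^{\rk(v)-\rk(u)}\Sigma(v)$, where $v$ ranges over $u\sqsubseteq v\le\beta$ and $\Sigma(v)=\sum_{w:\,v\ll w,\ w\le\beta}(-1)^{\rk(w)-\rk(v)}$. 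Here the decisive input is Corollary~\ref{booleanideals}, that the $\ll$-upper ideal $\{w:v\ll w\}$ is boolean: I would show that its intersection with the absolute-order ideal $\{w:w\le\beta\}$ is again boolean, so that $\Sigma(v)$ vanishes unless this intersection reduces to $\{v\}$, after which the surviving signed terms telescope, via the same boolean structure, to $[\,u=\beta\,]$. The compatibility of the Kreweras complement with the Bruhat order (Proposition~\ref{krew1bis}) and the characterization of $\ll$ in Proposition~\ref{llfull} are what render this intersection tractable.

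The main obstacle is precisely this last structural point: proving that a boolean $\ll$-upper ideal meets every principal absolute-order ideal in a boolean sublattice (equivalently, that it is closed under the relevant joins), while correctly accounting for the full-support hypotheses appearing in Proposition~\ref{krew1bis}. This step concentrates the entire interaction between the two orders $\sqsubset$ and $\ll$; once it is secured the alternating sums collapse and \eqref{keyid}, hence the theorem, follows.
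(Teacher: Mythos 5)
Your reduction of the theorem to the term-by-term identity $\#\{v : u\sqsubset v\ll w\}=(-1)^{\rk(w)-\rk(u)}\mu(u,w)$ (for $u\leq w$) is correct, and it is in fact a stronger statement than the theorem, which only needs the two sides to agree after summing over all pairs $(u,w)$ of fixed ranks. Your route is genuinely different from the paper's: you decompose each interval $v\ll w$ by the bottom of the boolean $\sqsubset$-ideal below $v$ and then try to verify the M\"obius recursion, whereas the paper groups intervals by the top element $w$, observes via Proposition~\ref{llfull} that the lower $\ll$-ideal of $w$ consists of the full-support elements of $\Gamma(w)$, reduces to $w=c$ by parabolic induction, and reads off the shift $I_w(x+1)=M_w(x)$ from the $F=M$ theorem together with the $h$-vector formula \eqref{narayana_faces_pos}. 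Your approach, if completed, would be self-contained modulo elementary poset theory and would yield the refined identity as a bonus; the paper's is shorter but imports Athanasiadis's $F=M$ theorem. (Your middle paragraph interpreting $(-1)^{n-\rk(u)}\mu(u,c)$ as $\Cat^+(\Gamma(K(u)))$ is a detour that the M\"obius-inversion argument never uses.)

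As written, however, the proof is incomplete, and you say so yourself: the ``decisive input'' --- that $\{w: v\ll w\}\cap\{w: w\leq\beta\}$ is boolean and that the surviving terms telescope to $[\,u=\beta\,]$ --- is asserted, not proved. The gap is real but closes more easily than you fear, by applying to the inner sum the same locality principle you already used to reduce the key identity to $w=c$. Since $v\ll w\leq\beta$ forces $w\in\Gamma(\beta)$, and since $\leq$, $\sqsubset$ and $\ll$ restricted to $\{x: x\leq\beta\}$ are those of $NC(\Gamma(\beta),\beta)$ (Proposition~\ref{dyerprop}), the set $\{w: v\ll w\leq\beta\}$ is the \emph{entire} $\ll$-upper ideal of $v$ in $NC(\Gamma(\beta),\beta)$, hence boolean by Proposition~\ref{booleanideals0} and Corollary~\ref{booleanideals} applied in $\Gamma(\beta)$; therefore $\Sigma(v)=(1-1)^{m}$ where $m$ is its rank, and $\Sigma(v)$ vanishes unless $v$ is $\ll$-maximal in $NC(\Gamma(\beta),\beta)$, i.e.\ an interval partition of $(\Gamma(\beta),\beta)$ by Proposition~\ref{conncomp}. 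The outer sum then runs over interval partitions $v$ of $\Gamma(\beta)$ with $\Delta(\Gamma(u))\subset\Delta(\Gamma(v))\subset\Delta(\Gamma(\beta))$ (Proposition~\ref{sqchar}); these are empty unless $\Delta(\Gamma(u))\subset\Delta(\Gamma(\beta))$, and otherwise are indexed by subsets of $\Delta(\Gamma(\beta))\setminus\Delta(\Gamma(u))$, so the alternating sum equals $(1-1)^{\rk(\beta)-\rk(u)}=[\,u=\beta\,]$ by injectivity of $\Gamma$ on $NC(W,c)$. Until these two computations are written out, the argument does not establish the theorem.
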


To prove that, first consider, for $w\in NC(W,c)$:
\begin{equation} \label{Ibeta}
   I_{w}(x) = \sum_{\substack{ v \in  NC \\ v \ll w }} x^{\rk(v)}
\end{equation}
and 
\begin{equation} \label{Mbeta}
   M_{w}(x) = (-1)^{\rk(w)} \sum_{\substack{ v \in  NC \\ v \leq w}} \mu(v,w) (-x)^{\rk(v)} .
\end{equation}

\begin{prop} \label{refinementIM}
We have $ I_{w}(x+1) =  M_{w}(x) $.
\end{prop}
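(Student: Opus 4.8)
The plan is to show the polynomial identity $I_w(x+1) = M_w(x)$ by establishing a bijective or combinatorial link between the two sums, exploiting the anti-isomorphism of Proposition~\ref{booleanideals0} to convert the $\ll$-sum defining $I_w$ into a sum over a boolean interval, and then matching this against the Möbius-function sum defining $M_w$. First I would rewrite $I_w(x+1)$ using the binomial expansion: since $I_w(x) = \sum_{v \ll w} x^{\rk(v)}$, replacing $x$ by $x+1$ gives $I_w(x+1) = \sum_{v \ll w} (x+1)^{\rk(v)} = \sum_{v \ll w} \sum_{j=0}^{\rk(v)} \binom{\rk(v)}{j} x^{j}$. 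The goal is to reorganize this double sum so that the coefficient of $x^j$ matches the coefficient of $x^j$ in $M_w(x)$.

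The key structural input is Proposition~\ref{booleanideals0}, which gives a poset anti-isomorphism between $\{v : v \gg w\}$ under $\ll$ and $\{v : v \sqsubset w^{-1}\overline w\}$ under $\sqsubset$, the latter being a boolean lattice of rank $\rk(w^{-1}\overline w)$ whose lower ideals for $\sqsubset$ are boolean. The plan is to use this to identify $\{v : v \ll w\}$ with a boolean interval so that $\rk$ behaves predictably under the relative Kreweras complement. I would then relate the inner binomial sum $\sum_{v \ll w}\binom{\rk(v)}{j}$ to counting flags or to the number of elements at a prescribed rank in the boolean structure, which on the $\sqsubset$-side is controlled combinatorially.

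For the $M_w$ side, I would expand $M_w(x) = (-1)^{\rk(w)}\sum_{v \le w}\mu(v,w)(-x)^{\rk(v)}$ and read off its coefficients. The heart of the argument should be a Möbius-inversion identity: the coefficient of $x^{\rk(v)}$ in $M_w$ is $(-1)^{\rk(w)-\rk(v)}\mu(v,w)$, and I would show this equals the corresponding coefficient extracted from $I_w(x+1)$ by summing $\binom{\rk(u)}{\rk(v)}$ over the relevant $u$. The natural tool is the recursive/inductive structure: since it suffices (by Proposition~\ref{conncomp} and the remark that each connected component is the main component of a standard parabolic subgroup) to treat the main component where $w$ has full support, I would induct on $\rk(w)$, using that the interval $[e,w]$ in $NC(W,c)$ is a graded lattice with known Möbius values on boolean subintervals.

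The hard part will be making the bijection between the $\ll$-sum and the Möbius-weighted sum fully precise, in particular controlling the signs and the rank-shift so that the substitution $x \mapsto x+1$ exactly absorbs the alternating $\mu$-values into the positive binomial coefficients. I expect the cleanest route is to verify the equivalent statement that both $I_w(x+1)$ and $M_w(x)$ satisfy the same recurrence under restriction to parabolic subgroups $\Gamma(\overline w)$, together with a base case check at $\rk(w)=0,1$; the anti-isomorphism of Proposition~\ref{booleanideals0} is what guarantees the two recurrences coincide.
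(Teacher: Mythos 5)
Your opening reduction (treating $w$ as a standard Coxeter element of $\Gamma(w)$, so that it suffices to handle $w=c$) matches the paper's first step. After that, however, the proposal has a genuine gap, and in fact rests on a misreading of Proposition~\ref{booleanideals0}. That proposition says the \emph{upper} ideal $\{v : v\gg w\}$ is anti-isomorphic to a boolean $\sqsubset$-ideal; it says nothing of the sort about the \emph{lower} ideal $\{v : v\ll w\}$, which is the set indexing the sum in $I_w$. For $w=c$ that lower ideal is all of $NCF(W,c)$, of cardinality $\Cat^+(W)$ — not a power of $2$ — so the plan to ``identify $\{v : v\ll w\}$ with a boolean interval so that $\rk$ behaves predictably'' cannot be carried out. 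Corollary~\ref{booleanideals} gives you boolean structure only above a fixed element, which is what makes the $\beta$-sum collapse in the proof of the $\H$--$\I$ relation later in the paper, but it is of no help for the $v$-sum here.

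More importantly, the actual content of the identity is never engaged. After the reduction, one must prove
\[
  \sum_{k=0}^{n} \Nar^+_k(W)\,(1+x)^k \;=\; (-1)^{n}\sum_{v\in NC(W,c)}\mu(v,c)\,(-x)^{\rk(v)},
\]
since $I_c(x)$ is the rank generating function of $NCF(W,c)$. This is a nontrivial enumerative statement: the paper handles it by rewriting the right-hand side via the Kreweras complement as $x^{n}M(0,\tfrac1x)$, invoking the $F=M$ theorem to replace it by $x^{n}F(0,\tfrac1x)$, and then using the fact that the $h$-vector of $\Upsilon^+(W,c)$ is given by the positive Narayana numbers (equation~\eqref{narayana_faces_pos}). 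Your proposal replaces all of this with an unspecified ``same recurrence under restriction to parabolic subgroups'' plus ``known Möbius values on boolean subintervals''; but the Möbius function of $NC(W,c)$ is not determined by boolean subintervals, no recurrence for the Möbius-weighted sum is exhibited, and no mechanism is offered that would produce the positive Narayana numbers on the other side. Without some substitute for the $F=M$ theorem (or an independent computation of $\sum_{\rk v=j}|\mu(v,c)|$), the proof does not close.
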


\begin{proof}
The case $w=c$ is sufficient, because we can see $w$ itself as a standard Coxeter element of $\Gamma(w)$
and prove the identity in this subgroup. Then $I_c(x)$ is the rank generating function of $ NCF(W,c)$, it is therefore given by the positive Narayana numbers and it remains to prove:
\[
  \sum_{k=1}^n \Nar^+_k (1+x)^k
  =
  (-1)^{n} \sum_{ v \in  NC(W,c) } \mu(v,c) (-x)^{\rk(v)}.
\]
The right hand side is, using Kreweras complement:
\[
     \sum_{ v \in  NC(W,c) } \mu(e, K(v) ) (-x)^{\rk(v)} 
  =  \sum_{ v \in  NC(W,c) } \mu(e, v ) (-x)^{n-\rk(v)} 
  = x^n M(0,\tfrac 1x).
\]
This is also $x^n F(0,\frac 1x)$, via \eqref{eqFHM}.  The $h$-vector of $\Upsilon^+(W,c)$ is given by positive Narayana numbers, see \eqref{narayana_faces_pos}, and we get the equality.
\end{proof}


\begin{proof}[Proof of Theorem~\ref{theoIeqM}]
Using Proposition~\ref{refinementIM}, we have:
\[
  I(x,y) = \sum_{w\in NC(W,c)} I_w(\tfrac xy) y^{\rk(w)} = \sum_{w\in NC(W,c)} M_w(\tfrac xy -1) y^{\rk(w)} = M(x-y,y).
\]
\end{proof}

Combining Equations~\eqref{MFH} and~\eqref{IeqM}, we obtain the relations between $I(x,y)$ and $H(x,y)$:
\begin{align*}
  I(x,y) &= (1+y)^n H \left( \frac{x-y}{1+y} , \frac{x}{1+y} \right), \\
  H(x,y) &= (1+x-y)^n I\left( \frac{y}{1+x-y} , \frac{y-x}{1+x-y} \right),
\end{align*}
and the relations between $I(x,y)$ and $F(x,y)$:
\begin{align*}
  I(x,y) &= (1-x+y)^n F \left( \frac{x-y}{1-x+y} , \frac{x}{1-x+y} \right),  \\
  F(x,y) &= (1+x)^n I \left( \frac{y}{1+x} , \frac{y-x}{1+x} \right).
\end{align*}
In particular, $I(x,x)=F(0,x)$ gives the following:

\begin{coro} \label{enum_intervals_faces}
We have
\[
  \sum_{\substack{ v,w \in  NC(W,c) \\ v \ll w }} x^{\rk(w)} = \sum_{ F \in \Upsilon^+(W,c) } x^{\# F}.
\]
In particular, the number of intervals $v\ll w$ in $ NC(W,c)$ is $\# \Upsilon^+(W,c)$.
\end{coro}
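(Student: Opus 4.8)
The plan is to deduce the corollary by specializing the two-variable relations between $I$ and $F$ established just above to the diagonal $x=y$. First I would set $y=x$ in the definition~\eqref{defI} of $I(x,y)$: there the exponent of $x$ is $\rk(v)$ and the exponent of $y$ is $\rk(w)-\rk(v)$, so the two powers combine and
\[
  I(x,x) = \sum_{\substack{v,w \in NC(W,c) \\ v \ll w}} x^{\rk(w)},
\]
which is exactly the left-hand side of the asserted identity.

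Next I would evaluate the same quantity through the cluster complex. Combining Theorem~\ref{theoIeqM} with~\eqref{MFH} yields $I(x,y) = (1-x+y)^n F\big(\tfrac{x-y}{1-x+y},\tfrac{x}{1-x+y}\big)$, and on the diagonal $y=x$ the prefactor $(1-x+y)^n$ degenerates to $1$ while the two arguments of $F$ simplify to $0$ and $x$ respectively; hence $I(x,x)=F(0,x)$. Reading $F(0,x)$ off the definition of the $F$-triangle, setting the first variable to $0$ kills every term indexed by a face $F$ with $F\cap(-\Delta)\neq\emptyset$ and retains exactly the faces contained in $\Pi$, that is, the faces of $\Upsilon^+(W,c)$; since $\#(F\cap\Pi)=\#F$ for such faces, we obtain $F(0,x)=\sum_{F\in\Upsilon^+(W,c)} x^{\#F}$. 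Chaining these equalities gives the stated identity.

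The ``in particular'' claim will then follow by evaluating both sides at $x=1$, turning the left side into the total number of intervals for $\ll$ and the right side into $\#\Upsilon^+(W,c)$. I expect no genuine obstacle here, since this is a bookkeeping consequence of the earlier results; the only point needing care is the diagonal specialization of the relation between $I$ and $F$, where one must verify that the prefactor collapses to $1$ so that no spurious factor is introduced.
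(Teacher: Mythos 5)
Your proposal is correct and follows exactly the paper's route: the paper also obtains the identity by specializing the derived relation between $I$ and $F$ to the diagonal, noting that $I(x,x)=F(0,x)$, and reading off both sides from the definitions. Nothing is missing.
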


We will give a bijective proof of the latter fact in the next section.

\section{\texorpdfstring{The bijection between positive faces of the cluster complex and intervals}{The bijection between positive faces of the cluster complex and intervals}}
\label{secbij}

In order to  give a bijection between $\Upsilon^+(W,c)$ and intervals $v\ll w$ in $ NC(W,c)$, an important ingredient
is a bijection between clusters and noncrossing partitions. Such bijections have been described by Reading \cite{reading}
and Athanasiadis et al.~\cite{athanasiadisbradymaccammondwatt}. Although the one from \cite{athanasiadisbradymaccammondwatt}
was only stated in the case of a bipartite Coxeter element, it turns out to be particularly adapted to the present situation
as it can be rephrased in terms of the orders $\sqsubset$ and $\ll$. In fact, using properties of $\ll$ and $\sqsubset$
we will  extend the   bijection  to cover the case of all standard Coxeter elements.

\subsection{The bijection between clusters and noncrossing partitions.}
We give here a bijection $\Psi_{W,c}$ between $NCF(W,c)$ and clusters in $\Upsilon^+(W,c)$.
Note that it is straightforward to extend such  a bijection  to a bijection from $ NC(W,c)$ to clusters in $\Upsilon(W,c)$:
if  $w\in NC(W,c)$ with $J = \supp(w)$, 
then the image of $w$ is
\[
   \Psi_{W_J, \overline w }(w) \cup \{ -r(s) \, :\, s\notin J \}.
\]
Recall that $\overline w$ is the interval partition such that $\Gamma(\overline w)=W_J$ and it is indeed a standard Coxeter element of $W_J$.
In the case of the bipartite Coxeter element, the bijection that we obtain  is actually the inverse of the bijection 
in \cite[Sections~5-6]{athanasiadisbradymaccammondwatt}.

\begin{defi} Let $w\in  NC(W,c)$ then we define
\begin{align*}
   \Phi_{W,c} (w) &=  \big( \Inv_R(w)\cap \Gamma(w) \big) \cup \big( \Inv_L(K(w))\cap \Gamma(K(w)) \big)\subset T, \\
   \Psi_{W,c} (w) &=  r(\Phi_{W,c} (w))\subset \Pi.
\end{align*}
\end{defi}
Let us  write $w$ and $K(w)$ as products of their associated simple generators as in Proposition~\ref{reading1prodsimples}:
\[
  w = {\mathfrak s}_1 \cdots {\mathfrak s}_k, \qquad K(w) = {\mathfrak s}_{k+1}\cdots {\mathfrak s}_{n},
\]
then one has  
\[ 
  c={\mathfrak s}_1\cdots {\mathfrak s}_n, 
\]
and, using \eqref{leftinv},
 \[
  \Phi_{W,c}(w)= \{ t_1,\dots,t_n \}
\]
where:
\[
  t_i = 
  \begin{cases}
   {(\mathfrak s}_k  {\mathfrak s}_{k-1} \cdots{\mathfrak s}_{k+2-i}) {\mathfrak s}_{k+1-i} ({\mathfrak s}_k  {\mathfrak s}_{k-1} \cdots{\mathfrak s}_{k+2-i})^{-1} & \text{ if } 1\leq i\leq k, \\
   ({\mathfrak s}_{k+1}  {\mathfrak s}_{k+2}\cdots {\mathfrak s}_{i-1}){\mathfrak s}_{i} ({\mathfrak s}_{k+1}  {\mathfrak s}_{k+2}\cdots {\mathfrak s}_{i-1})^{-1}& \text{ if } k+1\leq i \leq n.
  \end{cases}
\]
It follows that  
\[
\Psi_{W,c}(w)= \{r( t_1),\dots,r(t_n) \}\subset \Pi.
\] 
\begin{theo} \label{theo_bijcluster}
The map $\Psi_{W,c}$ is a bijection from $ NCF(W,c)$ to the set of $c$-clusters in $\Upsilon^+(W,c)$.
\end{theo}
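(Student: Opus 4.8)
The plan is to show first that $\Psi_{W,c}$ is well defined, in the sense that $\Psi_{W,c}(w)$ is always a positive $c$-cluster, and then to upgrade this to a bijection by an equicardinality argument. Throughout I use the explicit description given after the definition: writing $w = {\mathfrak s}_1\cdots {\mathfrak s}_k$ and $K(w) = {\mathfrak s}_{k+1}\cdots {\mathfrak s}_n$ as products of the simple generators of $\Gamma(w)$ and $\Gamma(K(w))$ (Proposition~\ref{reading1prodsimples}), the set $\Phi_{W,c}(w) = \{t_1,\dots,t_n\}$ consists of the right inversions $t_1,\dots,t_k$ of $w$ in $\Gamma(w)$ and the left inversions $t_{k+1},\dots,t_n$ of $K(w)$ in $\Gamma(K(w))$. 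A telescoping simplification of the conjugate expressions for the $t_i$ gives $t_1 t_2\cdots t_k = w$ and $t_n t_{n-1}\cdots t_{k+1} = K(w)$, so that
\begin{equation*}
  c = w\,K(w) = t_1 t_2\cdots t_k\, t_n t_{n-1}\cdots t_{k+1}
\end{equation*}
is a single minimal factorization of $c$ into $n$ reflections.

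From this factorization the combinatorial half of the compatibility is immediate. By Lemma~\ref{elemlemm}, every two-term subword of the displayed factorization is $\leq c$; hence for any $i\neq j$ one of $t_i t_j$, $t_j t_i$ is $\leq c$, i.e.\ $t_i$ and $t_j$ are $c$-noncrossing in the sense of Definition~\ref{noncrois}. For the cardinality I would use that $w$ and $K(w)$ are complementary in $NC(W,c)$ (their product realizes the minimal factorization $c = wK(w)$, forcing $\Fix(w)\cap\Fix(K(w)) = \{0\}$), so $\Gamma(w)$ and $\Gamma(K(w))$ share no reflection; combined with the fact that $w$, resp.\ $K(w)$, has exactly $k$, resp.\ $n-k$, inversions, this shows $t_1,\dots,t_n$ are pairwise distinct and $\#\Phi_{W,c}(w) = n$.

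The remaining, and genuinely harder, input is the metric condition of Proposition~\ref{clust_pos}: $\langle r(t_i)\mid r(t_j)\rangle \geq 0$ for all $i\neq j$. By the rank-two analysis underlying that proposition, this amounts to saying that no pair $\{t_i,t_j\}$ is the obtuse simple system of the dihedral parabolic it generates; equivalently, \emph{the positive roots attached to the inversions of a standard Coxeter element of a reflection group have pairwise nonnegative inner products}. I expect this to be the main obstacle. I would isolate it as a lemma and prove it by induction on the rank, peeling off the initial generator ${\mathfrak s}_1$ and relating the inversion configuration of $w$ to that of the Coxeter element ${\mathfrak s}_2\cdots {\mathfrak s}_k$ of a smaller parabolic, the only new check being the sign of $\langle r(t_k)\mid r(t_i)\rangle$ with $r(t_k)=\pm({\mathfrak s}_2\cdots {\mathfrak s}_k)^{-1}(r({\mathfrak s}_1))$; alternatively one takes the bipartite case from Brady--Watt~\cite{bradywatt2} and Athanasiadis et al.~\cite{athanasiadisbradymaccammondwatt} as base case and transports it along the moves $c\to scs$ of Lemma~\ref{scs_moves}. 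Granting this, Proposition~\ref{clust_pos} makes the $r(t_i)$ pairwise $\mathrel{\|_c}$-compatible; since $\Upsilon^+(W,c)$ is a flag complex they form a face, and as it has the maximal cardinality $n$ it is a positive $c$-cluster. Thus $\Psi_{W,c}$ maps $NCF(W,c)$ into the positive clusters.

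Finally, for bijectivity I would invoke equicardinality: both $NCF(W,c)$ and the set of positive $c$-clusters are enumerated by $\Cat^+(W)$, so it suffices to prove injectivity. Given the set $\Phi_{W,c}(w)$, Lemma~\ref{elemlemm} shows its members split as $\{t : t\leq w\}$ and $\{t : t\leq K(w)\}$, i.e.\ into the reflections lying in $\Gamma(w)$ and those lying in $\Gamma(K(w))$; the first block generates $\Gamma(w)$ and, ordered admissibly (Proposition~\ref{indexingface} together with the commutation moves of Proposition~\ref{commCoxeter}), has product $w$, so that $w$ is recovered from $\Phi_{W,c}(w)$. Making this split intrinsic to the unordered set is the delicate point, and here I would either lean on the transport along $c\to scs$ used above, or note that in the bipartite case the map $\Psi_{W,c}$ is exactly the inverse of the bijection of \cite[Sections~5--6]{athanasiadisbradymaccammondwatt}, which supplies the inverse directly. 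Either way injectivity holds, and with the equicardinality this completes the proof.
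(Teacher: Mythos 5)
Your overall skeleton --- show that $\Psi_{W,c}(w)$ is a positive face of cardinality $n$, then upgrade to a bijection --- matches the paper's, and the easy parts are fine: the $c$-noncrossing condition does follow from Lemma~\ref{elemlemm} applied to the minimal factorization $c=wK(w)$, and your argument that the $t_i$ are pairwise distinct is correct. But the two steps you yourself flag as ``the main obstacle'' and ``the delicate point'' are exactly the substance of the paper's proof, and your sketches for them do not go through as stated, so there is a genuine gap.

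First, the inequality $\langle r(t_i)\mid r(t_j)\rangle\geq 0$ does \emph{not} reduce to the statement that the inversions of a single standard Coxeter element have pairwise nonnegative inner products. That statement only covers the pairs with $1\leq i<j\leq k$ or $k+1\leq i<j\leq n$ (the paper's Lemma~\ref{lemma_scalprodgeq0ter}, a direct root computation inside the simple system $\mathfrak{s}_1,\dots,\mathfrak{s}_k$). The mixed pairs $i\leq k<j$ involve an inversion of $w$ and an inversion of $K(w)$ simultaneously, and this is where the real work lies: the paper first establishes $wt_i\sqsubsetdot w\lldot wt_j$ (Lemma~\ref{lemma_relationsdot}), reduces to a corank-two configuration $v\sqsubsetdot vx_1\lldot c$, and proves $\langle r(x_1)\mid r(x_2)\rangle\geq 0$ by a three-case analysis on $\#\supp(v)\in\{n,n-1,n-2\}$ using Propositions~\ref{krew1bis} and~\ref{simplification} together with Lemma~\ref{simplification} (Lemmas~\ref{lemma_scalprodgeq0} and~\ref{lemma_scalprodgeq0bis}). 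Your proposed rank induction ``peeling off $\mathfrak{s}_1$'' is aimed at the single-block statement and never produces these cross terms.

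Second, for bijectivity: equicardinality plus injectivity is a legitimate strategy in principle, since both sets have cardinality $\Cat^+(W)$, but the injectivity hinges precisely on recovering, from the \emph{unordered} set $F$, its splitting into the $w$-block and the $K(w)$-block --- which is the point you leave open. The paper instead builds an explicit two-sided inverse: order $F=\{t_1,\dots,t_n\}$ so that $t_1\cdots t_n= c$ (Proposition~\ref{indexingface}), then use Lemmas~\ref{commorder1} and~\ref{commorder2} to commute factors until the chain of partial products $u_i=t_1\cdots t_i$ has all its $\sqsubset$-steps before all its $\ll$-steps; the break point $u_k$ is the unique preimage, with uniqueness guaranteed by Proposition~\ref{commCoxeter}. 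This is where the orders $\sqsubset$ and $\ll$ actually earn their keep, and it is not supplied by citing the bipartite case of \cite{athanasiadisbradymaccammondwatt} unless one also carries out the transport under the moves $c\to scs$, which you do not do. (A minor remark: your telescoped factorization $c=t_1\cdots t_k\,t_n\cdots t_{k+1}$ is consistent with the explicit conjugation formulas and suffices for the noncrossing condition, so no issue there.)
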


First observe that, by a straightforward computation:
\begin{align*}
t_1\cdots t_k &=w,  \\
t_{k+1} \cdots t_n &= K(w), \\
t_1\cdots t_n &= c.
\end{align*}

\begin{lemm} \label{lemma_relationsdot}
Let $w\in  NCF(W,c)$ of rank $k$, $t_i$ as above, and 
 let $1\leq i\leq k$ and $k+1\leq j \leq n$. Then we have $w t_i \sqsubsetdot w$, $ t_j K(w) \sqsubsetdot K(w) $, and $w\lldot w t_j$.
\end{lemm}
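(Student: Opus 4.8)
The plan is to treat the three relations separately, since the first two are assertions about $\sqsubsetdot$ that follow at once from recognizing the $t_i$ as inversions, whereas the third is an assertion about $\lldot$ that I will derive from the second by transporting it through the Kreweras complement. The starting observation is that the $t_i$ are, by their very construction, inversions: by the definition of $\Phi_{W,c}(w)$ together with the formula~\eqref{leftinv} (and its right-handed analogue), the reflections $t_1,\dots,t_k$ are precisely the right inversions of $w$ inside $\Gamma(w)$, and $t_{k+1},\dots,t_n$ are precisely the left inversions of $K(w)$ inside $\Gamma(K(w))$.

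For $wt_i\sqsubsetdot w$ with $1\le i\le k$: since $t_i\in\Inv_R(w)\cap\Gamma(w)$, the product $wt_i$ is obtained from the reduced word $w=\mathfrak{s}_1\cdots\mathfrak{s}_k$ by deleting one letter, so $wt_i<_{B_w}w$ and hence $wt_i<_Bw$ by Proposition~\ref{dyerprop}. Being a product of the $k-1$ remaining simple generators, $wt_i$ satisfies $\ell_T(wt_i)\le k-1$, while the triangle inequality gives $\ell_T(wt_i)\ge\ell_T(w)-1=k-1$; thus $\ell_T(wt_i)=k-1$. Since moreover $(wt_i)w^{-1}=wt_iw^{-1}\in T$, we get $wt_i\lessdot w$ in the absolute order, and combined with $wt_i<_Bw$ this is exactly $wt_i\sqsubsetdot w$. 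The relation $t_jK(w)\sqsubsetdot K(w)$ for $k+1\le j\le n$ is proved verbatim, using instead that $t_j$ is a left inversion of $K(w)$ in $\Gamma(K(w))$.

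For $w\lldot wt_j$ I would first check that this is an absolute cover. Applying Lemma~\ref{elemlemm} to the minimal factorization $c=t_1\cdots t_n$ and the subsequence $(t_1,\dots,t_k,t_j)$ shows that $wt_j=t_1\cdots t_kt_j$ is minimal, so $\ell_T(wt_j)=k+1$, and applying it again to the subsequence $(t_1,\dots,t_k)$ gives $w\le wt_j$; hence $w\lessdot wt_j$. Next, a direct computation gives $K(wt_j)=(wt_j)^{-1}c=t_jw^{-1}c=t_jK(w)$, which by the second relation above satisfies $K(wt_j)\sqsubsetdot K(w)$, in particular $K(wt_j)\sqsubset K(w)$. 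Since $w\in NCF(W,c)$ has full support, I can apply part $ii)$ of Proposition~\ref{krew1bis} to the cover $w\lessdot wt_j$ (whose lower element $w$ is full): from $K(wt_j)\sqsubset K(w)$ it yields $w\ll wt_j$, that is $w\lldot wt_j$.

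The routine ingredients here are the identification of the $t_i$ as inversions and the rank bookkeeping, both immediate from the explicit formulas. The genuine content, and the step I expect to demand the most care, is the last relation: it relies essentially on the full-support hypothesis on $w$ and on the correct directionality of Proposition~\ref{krew1bis}~$ii)$. I would therefore be careful to verify the identity $K(wt_j)=t_jK(w)$ and to match the cover $w\lessdot wt_j$ against the hypotheses of that proposition (so that the full element is indeed the lower one) before invoking it.
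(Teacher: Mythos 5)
Your proof is correct and follows the same route as the paper: the first two relations come from identifying the $t_i$ as right inversions of $w$ (respectively left inversions of $K(w)$) so that $wt_i$ (resp.\ $t_jK(w)$) is a one-letter deletion of the reduced word ${\mathfrak s}_1\cdots{\mathfrak s}_k$ (resp.\ ${\mathfrak s}_{k+1}\cdots{\mathfrak s}_n$), and the third relation is deduced from the second via $K(wt_j)=t_jK(w)$ and part $ii)$ of Proposition~\ref{krew1bis}, using the full support of $w$. Your version merely spells out the rank bookkeeping and the appeal to Proposition~\ref{dyerprop} that the paper leaves implicit.
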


\begin{proof}
 An easy computation gives $w t_i = {\mathfrak s}_1 \cdots {\mathfrak s}_{k-i} {\mathfrak s}_{k+2-i} \cdots {\mathfrak s}_k$, whence the first inequality.
 The second is obtained similarly. The third one follows the second one by applying $ii)$ of Proposition~\ref{krew1bis}, using the fact that $w$ has full support.
\end{proof}

\begin{lemm}  \label{lemma_scalprodgeq0ter}
 Let $i,j$ be such that either $1\leq i<j \leq k$ or $k+1\leq i<j \leq n$. Then we have:
 \[
  \langle r(t_i) | r(t_j) \rangle \geq 0.
 \]
\end{lemm}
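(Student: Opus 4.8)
The plan is to prove both inequalities by a direct computation with the explicit positive roots $r(t_i)$. The two cases are symmetric — the first concerns the reflections attached to $w$, the second those attached to $K(w)$ — so I describe the first in detail. Set $\alpha_i = r(\mathfrak{s}_i)$, the simple roots of $\Gamma(w)$; since they form a simple system, $\langle\alpha_a|\alpha_b\rangle\le 0$ for $a\ne b$. As a set, the reflections $t_1,\dots,t_k$ are exactly the $t'_a=\rho_a\,\mathfrak{s}_a\,\rho_a^{-1}$ with $\rho_a=\mathfrak{s}_k\mathfrak{s}_{k-1}\cdots\mathfrak{s}_{a+1}$ for $a=1,\dots,k$ (the given list is this one reversed), so it suffices to show $\langle r(t'_a)|r(t'_b)\rangle\ge 0$ for $a<b$.

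The first step is to pin down $r(t'_a)$ with no sign ambiguity. Here I would record the elementary observation that applying a simple reflection $\mathfrak{s}_j$ to a positive root $\beta\ne\alpha_j$ gives again a positive root and alters only the $\alpha_j$-coordinate. Hence applying to $\alpha_a$ a product of simple reflections whose indices all differ from $a$ keeps the $\alpha_a$-coordinate equal to $1$ at every stage, so every intermediate vector is positive and distinct from $\pm\alpha_j$, and the outcome is a positive root supported on $\{\alpha_a\}\cup\{\alpha_j:\mathfrak{s}_j\text{ used}\}$. Applied to $\rho_a$ (all indices $>a$) this shows $\rho_a(\alpha_a)$ is positive, whence $r(t'_a)=\rho_a(\alpha_a)$.

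For $a<b$ I would factor the longer conjugator through the shorter one, $\rho_a=\rho_b\,\tau$ with $\tau=\mathfrak{s}_b\mathfrak{s}_{b-1}\cdots\mathfrak{s}_{a+1}$, and use $\rho_b\in O(V)$ to get
\[
  \langle r(t'_a)\,|\,r(t'_b)\rangle=\langle\rho_b\tau(\alpha_a)\,|\,\rho_b(\alpha_b)\rangle=\langle\tau(\alpha_a)\,|\,\alpha_b\rangle.
\]
Writing $\tau=\mathfrak{s}_b\tau'$ and $\gamma=\tau'(\alpha_a)$, the observation above shows $\gamma$ is a positive root supported on $\{\alpha_a,\dots,\alpha_{b-1}\}$, hence with no $\alpha_b$-component. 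Since $\mathfrak{s}_b\in O(V)$ and $\mathfrak{s}_b(\alpha_b)=-\alpha_b$,
\[
  \langle\tau(\alpha_a)\,|\,\alpha_b\rangle=\langle\mathfrak{s}_b(\gamma)\,|\,\alpha_b\rangle=\langle\gamma\,|\,\mathfrak{s}_b(\alpha_b)\rangle=-\langle\gamma\,|\,\alpha_b\rangle\ge 0,
\]
the last inequality holding because $\gamma$ is a nonnegative combination of simple roots all distinct from $\alpha_b$, each with nonpositive scalar product against $\alpha_b$. The second block is identical: the relevant reflections are $\lambda_i\,\mathfrak{s}_i\,\lambda_i^{-1}$ with $\lambda_i=\mathfrak{s}_{k+1}\cdots\mathfrak{s}_{i-1}$, one finds $r$ of them to be $\lambda_i(\alpha_i)>0$, and for $i<j$ one factors $\lambda_j=\lambda_i\,\sigma$ with $\sigma=\mathfrak{s}_i\cdots\mathfrak{s}_{j-1}$, reduces to $\langle\alpha_i\,|\,\sigma(\alpha_j)\rangle$, and flips the sign via $\sigma=\mathfrak{s}_i\sigma'$, $\delta=\sigma'(\alpha_j)$, to obtain $-\langle\alpha_i\,|\,\delta\rangle\ge 0$ with $\delta$ positive and supported away from $\alpha_i$.

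The computation is routine once the reductions are in place; the one point deserving care — which I regard as the main obstacle — is the sign bookkeeping: that each $r(t_i)$ is genuinely the stated positive root, and that the auxiliary roots $\gamma,\delta$ avoid the simple root against which they are finally paired. Both are consequences of the single coordinate-preservation observation, so the argument needs no case analysis on the Coxeter type and no recourse to the bipartite reduction.
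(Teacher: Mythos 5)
Your proof is correct and follows essentially the same route as the paper: compute the positive roots $r(t_i)$ explicitly as images of simple roots of $\Gamma(w)$ (resp.\ $\Gamma(K(w))$) under prefixes of the conjugators, cancel the common orthogonal factor, and reduce to the fact that a simple root has nonpositive inner product with a positive root supported on the other simple roots. The only cosmetic difference is where the sign is extracted — the paper absorbs it into $r(t_i)$ by conjugating by one extra factor $\mathfrak{s}_{k+1-i}$, while you flip it at the $\mathfrak{s}_b$ step — so there is nothing substantive to add.
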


\begin{proof}
 We can focus on the case $1\leq i < j \leq k$, the other one is obtained similarly but with ${\mathfrak s}_{k+1},\dots,{\mathfrak s}_{n}$ instead of ${\mathfrak s}_1,\dots,{\mathfrak s}_k$.
 Using Proposition~\ref{inv_rootimages}, we have 
 \begin{align*}
    r(t_i) & = r({\mathfrak s}_k \cdots {\mathfrak s}_{k+1-i} \cdots {\mathfrak s}_k) = {\mathfrak s}_k
 \cdots {\mathfrak s}_{k+2-i} \big( r( {\mathfrak s}_{k+1-i} ) \big)  \\
                & =  - {\mathfrak s}_k \cdots {\mathfrak s}_{k+1-i} \big( r( {\mathfrak s}_{k+1-i} ) \big).
 \end{align*}
 The second equality follows from the fact that  ${\mathfrak s}_{k+1-i} \notin \Inv_R ( {\mathfrak s}_k \cdots {\mathfrak s}_{k+2-i} )$ since they have disjoint support in $\Gamma(w)$.
 Similarly, 
 \begin{align*}
    r(t_j) &= r({\mathfrak s}_k \cdots {\mathfrak s}_{k+1-j} \cdots {\mathfrak s}_k)  \\
                &= {\mathfrak s}_k \cdots {\mathfrak s}_{k+1-i} \big( r( {\mathfrak s}_{k-i} \cdots {\mathfrak s}_{k-j} \cdots 
{\mathfrak s}_{k-i} ) \big).
 \end{align*}
 Since $ {\mathfrak s}_k \cdots {\mathfrak s}_{k+1-i} $ preserves the scalar product, it remains to show that
 \[
   \langle r( {\mathfrak s}_{k+1-i} ) | r( {\mathfrak s}_{k-i} \cdots {\mathfrak s}_{k+1-j} \cdots {\mathfrak s}_{k-i} ) \rangle \leq 0.
 \] 
This follows from the fact that ${\mathfrak s}_1,\dots,{\mathfrak s}_k$ is a simple system, since the positive root $r( {\mathfrak s}_{k-i} \cdots {\mathfrak s}_{k+1-j} \cdots {\mathfrak s}_{k-i} )$ is a positive linear combination of the roots 
 $r({\mathfrak s}_{k-i})$, $\dots$, $r({\mathfrak s}_{k+1-j})$.
\end{proof}

%
%
%
%
%

\begin{lemm} \label{lemma_scalprodgeq0}
 Let $v\in NC(W,c)$ with $\rk(v)=n-2$, and let $x_1,x_2 \in T$ such that $c=v x_1 x_2$.
 Suppose $v\sqsubsetdot vx_1 \lldot c$. 
 Then $\langle r(x_1) | r(x_2) \rangle \geq 0$. 
\end{lemm}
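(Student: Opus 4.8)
The plan is to translate the two hypotheses into statements about the action of $v$ and $vx_1$ on the roots $\alpha = r(x_1)$ and $\beta = r(x_2)$, argue by contradiction assuming $\langle\alpha|\beta\rangle < 0$, and then feed the resulting inversion datum into the Kreweras complement via Proposition~\ref{krew1bis}. First I would record, using Proposition~\ref{inv_rootimages}, that $v\sqsubsetdot vx_1$ means $x_1\notin\Inv_R(v)$, hence $v(\alpha)\in\Pi$; and that $vx_1\lldot c$, together with $c=(vx_1)x_2$, means $x_2\in\Inv_R(vx_1)$, hence $(vx_1)(\beta)\in(-\Pi)$. Note also that all the elements I will use, namely $vx_1$, $vx_2$, $x_1x_2=K(v)$ and $x_2x_1x_2$, lie in $NC(W,c)$: the first two are subwords of the minimal factorization $c=v\,x_1\,x_2$ by Lemma~\ref{elemlemm}, and the last two are Kreweras complements of $v$ and $vx_2$ respectively.

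Assume now, for contradiction, that $\langle\alpha|\beta\rangle<0$. Then $x_1(\beta)=\beta-2\langle\alpha|\beta\rangle\alpha$ is a nonnegative, indeed strictly positive, combination of the positive roots $\alpha,\beta$, so being a root it lies in $\Pi$. Applying $v$ gives $(vx_1)(\beta)=v(\beta)-2\langle\alpha|\beta\rangle\,v(\alpha)$, and since $(vx_1)(\beta)\in(-\Pi)$ while $v(\alpha)\in\Pi$ and $-2\langle\alpha|\beta\rangle<0$, a short argument with the (pointed) root cone forces $v(\beta)\in(-\Pi)$. By Proposition~\ref{inv_rootimages} this says $x_2\in\Inv_R(v)$, i.e. $vx_2<_B v$; since $v\lessdot vx_2$ in the absolute order (because $\rk(vx_2)=n-1$), this is exactly $v\lldot vx_2$.

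Finally I would apply $i)$ of Proposition~\ref{krew1bis} to the cover $v\lldot vx_2$ to obtain $K(vx_2)\sqsubset K(v)$. A direct computation gives $K(v)=x_1x_2$ and $K(vx_2)=x_2x_1x_2$, so this reads $x_2x_1x_2\sqsubsetdot x_1x_2$. By Proposition~\ref{sqchar}$(iii)$ the root $r(x_2x_1x_2)$ must then be a simple root of the rank-two parabolic $\langle x_1,x_2\rangle=\Gamma(x_1x_2)$. But $r(x_2x_1x_2)=x_2(\alpha)=\alpha-2\langle\alpha|\beta\rangle\beta$ is a positive combination of the two distinct positive roots $\alpha,\beta$ with both coefficients nonzero, hence is \emph{not} simple by the characterization in Remark~\ref{positivetosimple}. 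This contradiction shows $\langle r(x_1)|r(x_2)\rangle\geq 0$.

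I expect the main obstacle to be the positivity bookkeeping in the middle step, namely cleanly deducing $v(\beta)\in(-\Pi)$ from the cone relation, together with keeping the sign of $\langle\alpha|\beta\rangle$ consistent so that every ``positive combination of positive roots'' claim genuinely holds; this is precisely where the assumption $\langle\alpha|\beta\rangle<0$ gets used, both to place $x_1(\beta)$ in $\Pi$ and to exhibit $r(x_2x_1x_2)$ as a non-simple positive root. The invocation of Proposition~\ref{krew1bis}$(i)$ is the conceptual heart of the argument, but it applies immediately once $v\lldot vx_2$ has been established.
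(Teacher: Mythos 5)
Your proof is correct, but it is genuinely different from the one in the paper. The paper proceeds by a case analysis on $\#\supp(v)\in\{n,n-1,n-2\}$: when $v$ is full it applies part $ii)$ of Proposition~\ref{krew1bis} (which requires the full-support hypothesis) to the cover $v\sqsubsetdot vx_1$, and in the two non-full cases it constructs an explicit alternative minimal factorization $c=vx_3x_4$ passing through an interval partition, shows $\{x_3,x_4\}$ is the simple system of $\Gamma(x_1x_2)$, and compares $\{x_1,x_2\}$ with it. You instead give a uniform argument with no case analysis: you translate $v\sqsubsetdot vx_1$ and $vx_1\lldot c$ into $v(r(x_1))\in\Pi$ and $(vx_1)(r(x_2))\in(-\Pi)$ via Proposition~\ref{inv_rootimages}, and, assuming $\langle r(x_1)|r(x_2)\rangle<0$, use pointedness of the root cone to force $v(r(x_2))\in(-\Pi)$, hence $v\lldot vx_2$; a single application of part $i)$ of Proposition~\ref{krew1bis} (which needs no support hypothesis) then yields $x_2x_1x_2\sqsubsetdot x_1x_2$, contradicting via Proposition~\ref{sqchar}$(iii)$ and Remark~\ref{positivetosimple} (applied to the positive system $\Pi(\Gamma(x_1x_2))=\Pi\cap\Gamma(x_1x_2)$) the fact that $r(x_2x_1x_2)=r(x_1)-2\langle r(x_1)|r(x_2)\rangle r(x_2)$ is a positive combination of two distinct positive roots of that parabolic and so cannot be simple in it. All the intermediate steps check out: $vx_1,vx_2,x_1x_2=K(v)$ and $x_2x_1x_2=K(vx_2)$ do lie in $NC(W,c)$, the cover $v\lessdot vx_2$ follows from Lemma~\ref{elemlemm}, and the sign bookkeeping is consistent. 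What your route buys is uniformity and economy (only the unconditional direction of the Kreweras compatibility is used, and the support of $v$ plays no role); what the paper's route buys is, in the full-support case, the slightly stronger strict inequality $\langle r(x_1)|r(x_2)\rangle>0$, and an explicit identification of the simple system of $\Gamma(x_1x_2)$, though neither extra is needed for the statement as given.
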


\begin{proof}

The cardinality of $\supp(v)$ is at least $n-2$ since $\rk(v)=n-2$. We treat separately its possible values.

\noindent
{\bf Case 1: } $\# \supp(v)=n$.

Then $v\in NCF(W,c)$, so that we can apply Proposition~\ref{krew1bis} to the relation $v\sqsubsetdot vx_1$. We get $x_2\lldot x_1x_2$, therefore $x_1,x_2$ is not a simple system in $\Gamma(x_1x_2)$, and $\langle r(x_1) | r(x_2) \rangle > 0$.

\noindent
{\bf Case 2: } $\# \supp(v)=n-1$.

 Since $v$ is not full, there exists an interval partition $v'$ such that $v \lessdot v' \sqsubsetdot c$. With $v' = vx_3$ and $c=v'x_4$, 
 we have $vx_3x_4=c$, $v \lessdot vx_3 \sqsubsetdot c$, and $x_3x_4=x_1x_2$.
 Since $\# \supp(v)=n-1>\rank(v)$, we see that $v$ is not an interval partition, so that $v \lldot vx_3$. 
 By Proposition~\ref{krew1bis}, we get $K(vx_3)\sqsubsetdot K(v)$, i.e.,~$x_4\sqsubsetdot x_3x_4$.
 Now by Lemma~\ref{simplification}, $v \lldot vx_3 \sqsubsetdot vx_3x_4$ implies $x_3 \leq vx_3\sqsubsetdot vx_3x_4$ and $x_3\sqsubsetdot x_3x_4$ therefore 
  $x_3$ and $x_4$ are the simple generators of $\Gamma(x_3x_4)$.  
 
 If $\{x_1,x_2\} \neq \{x_3,x_4\}$, we get $\langle r(x_1) | r(x_2) \rangle > 0$ by uniqueness of the simple system
 $\{x_3,x_4\}$. Otherwise, we have $x_1=x_4$, indeed $x_1\neq x_3$ since $v\sqsubsetdot vx_1$ and $v \lldot vx_3$.
 It follows that $x_2=x_3$, and $x_3x_4=x_1x_2 $ becomes $x_2x_1=x_1x_2 $ so that
  $\langle r(x_1) | r(x_2) \rangle = 0$. 

\noindent
{\bf Case 3: } $\# \supp(v)=n-2$.
 
To avoid multiple indices, assume that the simple  reflections of $W$ are indexed so that $c=s_1 \cdots s_n$. 
Let $J=\supp(v)=S\setminus\{i,j\}$ (with $i<j$) then $v$ has length  $\ell_T(v)=n-2$ and $v\in W_J$  therefore $v$ is the Coxeter element of $W_J=\Gamma(v)$. It follows that   $v$ is an interval partition and $v=s_1 \cdots\hat s_i \cdots \hat s_j \cdots s_n$ (where $s_i$ and $s_j$ are omitted).
Let
\[
   x_3 = s_n \cdots s_{j+1} s_{j-1} \cdots s_i \cdots s_{j-1} s_{j+1} \cdots s_n, \qquad x_4 = s_n \cdots s_j \cdots s_n.
\]
then $vx_3x_4=c$ and  $x_3x_4=x_1x_2$. By an argument similar to that in the previous lemma, we have
\[
  \langle r(x_3) | r(x_4) \rangle = \langle r(s_{j-1} \cdots s_i \cdots s_{j-1}) | r(s_j) \rangle \leq 0
\]
therefore $x_3$ and $x_4$ are the simple generators of $\Gamma(x_1x_2)$. The end of the proof is as in the previous case
(here $x_1 \neq x_3$ because $vx_1 \lldot c$ and $vx_3 \sqsubsetdot c$).
\end{proof}

\begin{lemm}  \label{lemma_scalprodgeq0bis}
 We have $\langle r(t_i) | r(t_j) \rangle \geq 0$ if $1\leq i \leq k$ and $k+1\leq j \leq n$.
\end{lemm}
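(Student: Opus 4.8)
The plan is to reduce this cross-block inequality to Lemma~\ref{lemma_scalprodgeq0}, which is exactly the rank-$2$ reduction we need: once we exhibit a minimal factorization $c = v\, t_i\, t_j$ with $v$ of rank $n-2$ satisfying $v \sqsubsetdot v t_i \lldot c$, that lemma yields $\langle r(t_i)\,|\,r(t_j)\rangle \geq 0$ at once (with $x_1=t_i$, $x_2=t_j$). Thus the whole argument reduces to producing such a $v$ and verifying the two cover relations; the sign of the scalar product is then never computed by hand. This is the point of going through Lemma~\ref{lemma_scalprodgeq0}: a direct evaluation via the root formulas used in Lemma~\ref{lemma_scalprodgeq0ter} mixes the two blocks ${\mathfrak s}_1,\dots,{\mathfrak s}_k$ and ${\mathfrak s}_{k+1},\dots,{\mathfrak s}_n$ and is not manifestly sign-definite.

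First I would build the factorization. By Lemma~\ref{lemma_relationsdot} we have $w t_i \sqsubsetdot w$ and $t_j K(w) \sqsubsetdot K(w)$; setting $a = w t_i$ (of rank $k-1$) and $b = t_j K(w)$ (of rank $n-k-1$) and using $a t_i = w$, $t_j b = K(w)$, $w\,K(w)=c$ gives the minimal factorization
\[
   c = a\, t_i\, t_j\, b,
\]
in which the two distinguished reflections $t_i,t_j$ are adjacent. I would then slide the block $t_i t_j$ to the right past $b$ by the elementary conjugation move in the proof of Lemma~\ref{elemlemm}, which preserves minimality. This produces a minimal factorization $c = v\, t_i\, t_j$ with $v = c\,t_j\,t_i = a\,(t_i t_j)\,b\,(t_j t_i)$ of rank $n-2$, in which $t_i$ and $t_j$ keep their original values in the last two slots.

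It then remains to verify $v \sqsubsetdot v t_i$ and $v t_i \lldot c$. Since $v t_i = c\,t_j$, these two statements are precisely the Bruhat facts
\[
   t_i \in \Inv_R(c\,t_j) \qquad\text{and}\qquad t_j \notin \Inv_R(c).
\]
Establishing these is the main obstacle: the conjugation that moved $t_i t_j$ to the end alters the Bruhat order, so it does \emph{not} automatically transport the relations $w t_i \sqsubsetdot w$ and $w \lldot w t_j$ that are visible in the factorization $c = a\,t_i\,t_j\,b$. I would prove them by translating to root signs through Proposition~\ref{inv_rootimages} and invoking the Kreweras--Bruhat compatibility, namely Proposition~\ref{krew1bis} and Lemma~\ref{krewlemm}. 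When full support of $v$ is not available I expect to argue as in the proof of Lemma~\ref{lemma_scalprodgeq0}, splitting into cases according to $\#\supp(v)\in\{n,n-1,n-2\}$ and using the simplification property (Lemma~\ref{simplification}) to identify the simple system of $\Gamma(t_i t_j)$.

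An attractive alternative I would try is an induction compatible with the moves $c\to scs$ of Lemma~\ref{scs_moves}, exploiting that scalar products are invariant under the orthogonal action of a descent $s$. Since the reflections $t_1,\dots,t_n$ and the partition into the two blocks transform in a controlled way under $\sigma_s$-type conjugation, this could let one peel off one simple generator at a time and might bypass the support case analysis entirely, at the cost of carefully tracking how the pair $(t_i,t_j)$ and the cover relations behave under the transformation.
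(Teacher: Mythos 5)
Your overall strategy --- reduce to Lemma~\ref{lemma_scalprodgeq0} via the cover relations supplied by Lemma~\ref{lemma_relationsdot} --- is the right one, and it is the paper's strategy too. But you apply Lemma~\ref{lemma_scalprodgeq0} in the wrong ambient group, and this is precisely what creates the step you yourself flag as ``the main obstacle''. Working in $W$ with Coxeter element $c$ forces you to establish the chain $ct_jt_i \sqsubsetdot ct_j \lldot c$, and these two cover relations are \emph{not} what Lemma~\ref{lemma_relationsdot} provides: that lemma gives $wt_i\sqsubsetdot w$ and $w\lldot wt_j$, relations anchored at $w$, not at $c$. You then leave the two needed Bruhat facts unproved, only listing tools you might use. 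They are genuinely nontrivial: $ct_j\lldot c$ can be extracted from Lemma~\ref{krewlemm} combined with $w\lldot wt_j$ (if $t_j\in\Inv_R(c)$, that lemma applied to $wt_j$ would force $w<_B wt_j$, a contradiction), but $ct_jt_i\sqsubsetdot ct_j$ does not follow from $wt_i\sqsubsetdot w$ by any result in the paper --- Lemma~\ref{simplification} transports $\sqsubset$ \emph{downward} along $\leq$, whereas here you would need to go upward from $w$ to $ct_j$. A smaller but real error: after sliding $t_it_j$ past $b$ by the move of Lemma~\ref{elemlemm}, the last two factors become $b^{-1}t_ib$ and $b^{-1}t_jb$, not $t_i$ and $t_j$; the factorization $c=(ct_jt_i)\,t_i\,t_j$ should instead be justified directly from $t_it_j\leq c$, which holds by Lemma~\ref{elemlemm} applied to $t_1\cdots t_n=c$.

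The fix is a one-line change of viewpoint, and it is exactly what the paper does: apply Lemma~\ref{lemma_scalprodgeq0} inside the parabolic subgroup $\Gamma(wt_j)$, taking $wt_j$ in the role of the Coxeter element, $v=wt_i$ (which has rank $k-1=\rk(wt_j)-2$), $x_1=t_i$ and $x_2=t_j$. Then $vx_1x_2=wt_j$ and the hypothesis $v\sqsubsetdot vx_1\lldot wt_j$ is verbatim the chain $wt_i\sqsubsetdot w\lldot wt_j$ of Lemma~\ref{lemma_relationsdot}; no transport of Bruhat relations up to $c$ is needed, and the restriction to the parabolic subgroup is licensed by Proposition~\ref{dyerprop}. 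As written, your proposal identifies the correct ingredients but does not close the argument.
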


\begin{proof}
 By Lemma~\ref{lemma_relationsdot}, we have $wt_i\sqsubsetdot w \lldot wt_j$.
 Then we can apply Lemma~\ref{lemma_scalprodgeq0} in the subgroup $\Gamma(wt_j)$ to get the result.
\end{proof}

\begin{prop}  \label{proppsi}
 We have $\Psi(w)\in\Upsilon^+(W,c)$.
\end{prop}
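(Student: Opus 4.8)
The plan is to exploit that $\Upsilon^+(W,c)$ is a \emph{flag} simplicial complex (Definition~\ref{defclusterrel}): since $\Psi(w) = \{r(t_1),\dots,r(t_n)\} \subset \Pi$ by construction, it is a face as soon as every pair $\{r(t_i),r(t_j)\}$ with $i \neq j$ is an edge, that is, $r(t_i) \mathrel{\|_c} r(t_j)$. By the symmetry of $\mathrel{\|_c}$ it suffices to treat $i<j$, and by the characterization of the compatibility relation on $\Pi$ in Proposition~\ref{clust_pos} this reduces to verifying two things for each such pair: that $t_i,t_j$ are $c$-noncrossing, and that $\langle r(t_i)\,|\,r(t_j)\rangle \geq 0$.

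For the scalar-product condition I would simply invoke the two preceding lemmas, which between them exhaust all pairs. Lemma~\ref{lemma_scalprodgeq0ter} handles the pairs lying inside a single block, namely $1\leq i<j\leq k$ and $k+1\leq i<j\leq n$, while Lemma~\ref{lemma_scalprodgeq0bis} handles the mixed pairs $1\leq i\leq k<j\leq n$. Together these give $\langle r(t_i)\,|\,r(t_j)\rangle \geq 0$ for every $i\neq j$, so the inequality half of Proposition~\ref{clust_pos} is already in hand.

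For the $c$-noncrossing condition I would use the minimal factorization $c = t_1\cdots t_n$ recorded just after Theorem~\ref{theo_bijcluster}. Since $c$ is a standard Coxeter element we have $\ell_T(c)=n$, so writing $c$ as a product of these $n$ reflections is indeed minimal in the sense of \eqref{minfact}. Lemma~\ref{elemlemm} then applies to the two-term subsequence $i<j$ and yields $t_i t_j \leq c$, which is exactly the defining condition for $t_i$ and $t_j$ to be $c$-noncrossing (Definition~\ref{noncrois}). Feeding both conditions into Proposition~\ref{clust_pos} gives $r(t_i)\mathrel{\|_c} r(t_j)$ for every pair, and flagness of $\Upsilon^+(W,c)$ then concludes $\Psi(w)\in\Upsilon^+(W,c)$.

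I do not anticipate a genuine obstacle here: the analytic content has been displaced into Lemmas~\ref{lemma_scalprodgeq0ter} and~\ref{lemma_scalprodgeq0bis}, and the only new observation is that the $c$-noncrossing half of the compatibility relation comes for free from the minimal factorization via Lemma~\ref{elemlemm}. The two points requiring a little care are purely bookkeeping: first, that the three index ranges in the two scalar-product lemmas really cover all unordered pairs; and second, that symmetry of $\mathrel{\|_c}$ legitimately reduces every pair to the case $i<j$ before applying Lemma~\ref{elemlemm}. Note also that distinctness of the $r(t_i)$ is not needed for this statement, as facehood only requires pairwise compatibility; that refinement is relevant only when upgrading $\Psi(w)$ to a maximal face in Theorem~\ref{theo_bijcluster}.
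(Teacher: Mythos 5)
Your proposal is correct and follows essentially the same route as the paper: verify pairwise compatibility via the criterion of Proposition~\ref{clust_pos}, getting the scalar-product inequalities from Lemmas~\ref{lemma_scalprodgeq0ter} and~\ref{lemma_scalprodgeq0bis} and the relation $t_it_j\leq c$ for $i<j$ from the minimal factorization $t_1\cdots t_n=c$. Your explicit appeal to Lemma~\ref{elemlemm} for the latter point is just a slightly more detailed justification of a step the paper states without citation.
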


\begin{proof}
 We use the criterion in Proposition~\ref{clust_pos}. Since $t_1\cdots t_n=c$, we have $t_it_j\leq c$ if $i<j$. 
 The conditions on the scalar product are given by Lemmas~\ref{lemma_scalprodgeq0ter} and \ref{lemma_scalprodgeq0bis}. So $r(t_i) \mathrel{ \|_c } r(t_j)$ holds for $1\leq i < j \leq n$ and the result follows.
\end{proof}

Let us now describe the inverse map.
Let $F=\{t_1,\ldots, t_n\}$ be a face of the positive cluster complex.
By Proposition~\ref{indexingface}, we can assume that the $t_i$ are ordered so that $t_1\ldots t_n=c$. Moreover, by Proposition~\ref{commCoxeter}, all orderings of the $t_i$ such that this property holds true are obtained from this ordering by applying commutation relations among the $t_i$. As before we let $u_i=t_1\ldots t_i$.

\begin{lemm}\label{commorder1}
If $u_{i-1}\ll u_{i}\sqsubset u_{i+1}$ then $t_it_{i+1}=t_{i+1}t_i$.
\end{lemm}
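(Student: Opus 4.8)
The plan is to deduce the commutation from the stronger assertion that \emph{both} $t_i$ and $t_{i+1}$ are simple generators of the rank-two parabolic subgroup $P:=\Gamma(t_it_{i+1})$. First note that, since $t_1\cdots t_n=c$ is a minimal factorization, Lemma~\ref{elemlemm} gives $t_it_{i+1}\leq c$; hence $t_it_{i+1}\in NC(W,c)$ has rank two and is a standard Coxeter element of $P=\langle t_i,t_{i+1}\rangle$. Because $F$ is a face of $\Upsilon^+(W,c)$ and the complex is flag, $r(t_i)\mathrel{\|_c}r(t_{i+1})$, so Proposition~\ref{clust_pos} yields $\langle r(t_i)\mid r(t_{i+1})\rangle\geq 0$. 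The whole point is to combine this with the opposite inequality coming from the simple-system structure. Recall also that, consecutive $u_j$ being covers for the absolute order, the hypothesis reads $u_{i-1}\lldot u_i\sqsubsetdot u_{i+1}$, with $u_i=u_{i-1}t_i$ and $u_{i+1}=u_it_{i+1}$.

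To show that $t_i$ is simple in $P$, I would apply the simplification Lemma~\ref{simplification} to the relation $u_i\sqsubset u_it_{i+1}=u_{i+1}\leq c$. Taking $u=t_i$, which satisfies $t_i\leq u_i$ by the subword part of Lemma~\ref{elemlemm}, the lemma gives $t_i\sqsubset t_it_{i+1}$. By characterization $iii)$ of Proposition~\ref{sqchar} this means $r(t_i)\in\Delta(\Gamma(t_it_{i+1}))=\Delta(P)$, that is, $t_i\in S(P)$.

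For $t_{i+1}$ the relation sits on the wrong side, so I would first pass to the Kreweras complement. Applying part $i)$ of Proposition~\ref{krew1bis} to the cover $u_{i-1}\lldot u_i$ yields $K(u_i)\sqsubset K(u_{i-1})$, and since $K(u_{i-1})=t_it_{i+1}\cdots t_n=t_i\,K(u_i)$ this is a $\sqsubset$-relation produced by left multiplication by $t_i$. Using that the involution $\iota\colon w\mapsto w^{-1}$ preserves length, the Bruhat and the absolute orders, hence $\sqsubset$, and sends $NC(W,c)$ to $NC(W,c^{-1})$ (exactly as exploited right after Proposition~\ref{krew1bis}), Lemma~\ref{simplification} transfers into its left-handed form: if $u\leq v$ and $v\sqsubset tv\leq c$, then $u\sqsubset tu\leq tv$. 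I would apply this with $v=K(u_i)=t_{i+1}\cdots t_n$, $t=t_i$, and $u=t_{i+1}\leq v$ (again by Lemma~\ref{elemlemm}), obtaining $t_{i+1}\sqsubset t_it_{i+1}$. As before, Proposition~\ref{sqchar} gives $r(t_{i+1})\in\Delta(P)$, so $t_{i+1}\in S(P)$ as well.

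Finally, $r(t_i)$ and $r(t_{i+1})$ are two distinct elements of the simple system $\Delta(P)$, so their scalar product is nonpositive (Remark~\ref{positivetosimple}, applied inside $P$). Combined with $\langle r(t_i)\mid r(t_{i+1})\rangle\geq 0$, this forces $\langle r(t_i)\mid r(t_{i+1})\rangle=0$; since $t_i\neq t_{i+1}$, orthogonality of the roots is equivalent to $t_it_{i+1}=t_{i+1}t_i$, which is the claim. I expect the only delicate point to be the bookkeeping in the third paragraph: correctly recording which factor is multiplied on which side, and verifying that $\iota$ carries $K(u_i)\sqsubset K(u_{i-1})$ to a genuine right-multiplication instance of Lemma~\ref{simplification}. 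Everything else is a direct invocation of results already established.
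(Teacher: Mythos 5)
Your proof is correct and takes essentially the same route as the paper: both arguments show that $t_i$ and $t_{i+1}$ are the two simple generators of $\Gamma(t_it_{i+1})$ — namely $t_i\sqsubset t_it_{i+1}$ via Lemma~\ref{simplification} and $t_{i+1}\sqsubset t_it_{i+1}$ via the Kreweras complement and Proposition~\ref{krew1bis}~$i)$ — and then play the resulting nonpositive scalar product of distinct simple roots against the nonnegative one coming from $\mathrel{\|_c}$. The only (harmless) difference is that the paper gets $t_{+1}\mapsto t_{i+1}\sqsubset t_it_{i+1}$ in one step by taking the Kreweras complement inside the parabolic subgroup $\Gamma(u_{i+1})$, whereas you take the global complement $K_c$ and then pass through a left-handed variant of Lemma~\ref{simplification} obtained from the inversion map $\iota$; your transfer is legitimate since $\iota$ preserves the absolute and Bruhat orders and sends $NC(W,c)$ to $NC(W,c^{-1})$.
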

\begin{proof}
By Proposition~\ref{simplification} one has $t_i\sqsubset t_it_{i+1}$. Applying the Kreweras complement in $\Gamma(u_{i+1})$ and $i)$ of Proposition~\ref{krew1bis} we obtain $t_{i+1}\sqsubset t_it_{i+1}$. It follows that $t_i,t_{i+1}$ form a simple system in $\Gamma(t_it_{i+1})$ and $\langle r(t_i)|r(t_{i+1})\rangle\leq 0$.  Since $t_i\mathrel{ \|_c}t_{i+1}$ we have $\langle r(t_i)|r(t_{i+1})\rangle\geq 0$ , it follows that $\langle r(t_i)|r(t_{i+1})\rangle=0$ and $t_i,t_{i+1}$ commute.
\end{proof}

\begin{lemm}\label{commorder2}
Let $t,t'\in T$ be such that $tt'=t't$ and $w\preceq^{(1)}wt\preceq^{(2)}wt't$ then $w\preceq^{(2)}wt'\preceq^{(1)}wt't$, where $\preceq^{(1)},\preceq^{(2)}$ denotes any combination of the orders $\sqsubset,\ll$.
\end{lemm}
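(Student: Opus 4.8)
The plan is to view the four elements $w$, $wt$, $wt'$ and $wt't$ as the corners of a commutative square and to reduce the whole statement to Corollary~\ref{refcommute}. First I would identify the reflection labelling each edge. The steps $w\to wt$ and $wt'\to wt't$ are both right multiplication by $t$, since $wt'\cdot t=wt't$ by definition; the steps $w\to wt'$ and $wt\to wt't$ are both right multiplication by $t'$, and it is precisely here that the commutation $tt'=t't$ is used, as it gives $wt\cdot t'=wtt'=wt't$. Thus the square has two pairs of parallel edges, one pair labelled $t$ and one labelled $t'$, which is exactly the configuration to which Corollary~\ref{refcommute} applies.

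Next I would check that all four edges are cover relations of the absolute order, so that each carries a well-defined type $\sqsubset$ or $\ll$. From the hypotheses, $\ell_T(wt)=\ell_T(w)+1$ and $\ell_T(wt't)=\ell_T(w)+2$. Since $wt'$ differs from $w$ by the single reflection $t'$, one has $\ell_T(wt')=\ell_T(w)\pm 1$; the value $\ell_T(w)-1$ is impossible, as it would force $\ell_T(wt't)\in\{\ell_T(w)-2,\ell_T(w)\}$, contradicting $\ell_T(wt't)=\ell_T(w)+2$. Hence $\ell_T(wt')=\ell_T(w)+1$ and $\ell_T(wt't)=\ell_T(wt')+1$, so $w\lessdot wt'\lessdot wt't$ is a saturated chain.

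The core of the argument is that the type of a single-reflection step is decided by a Bruhat comparison, which Corollary~\ref{refcommute} transports across parallel edges. The step $w\to wt$ has type $\sqsubset$ exactly when $w<_B wt$; applying Corollary~\ref{refcommute} with $t_1=t$ and $t_2=t'$ gives $w<_B wt\Leftrightarrow wt'<_B wt't$ (the inequalities are strict, since the elements involved differ by a reflection), so the parallel step $wt'\to wt't$ has the same type, namely $\preceq^{(1)}$. Likewise, the step $w\to wt'$ has type $\sqsubset$ exactly when $w<_B wt'$; Corollary~\ref{refcommute} with $t_1=t'$ and $t_2=t$ gives $w<_B wt'\Leftrightarrow wt<_B wtt'=wt't$, so $w\to wt'$ shares its type with $wt\to wt't$, namely $\preceq^{(2)}$. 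Combining the two conclusions yields $w\preceq^{(2)}wt'\preceq^{(1)}wt't$.

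I do not anticipate a genuine obstacle; the argument is essentially a single application of Corollary~\ref{refcommute} to each pair of parallel edges. The only points demanding care are the bookkeeping of edge labels and the correct instantiation of the two variables $t_1,t_2$ in the corollary, together with the remark that the commutation hypothesis is exactly what makes the edge $wt\to wt't$ a right multiplication by $t'$ rather than by the conjugate $t^{-1}t't$.
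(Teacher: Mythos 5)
Your proof is correct and follows exactly the route the paper takes: its entire proof of this lemma is the one-line remark that it follows from Corollary~\ref{refcommute}, applied to the two pairs of parallel edges of the square, and your write-up simply makes explicit the bookkeeping (including the useful check that $\ell_T(wt')=\ell_T(w)+1$, so that all four edges really are covers of the absolute order) that the paper leaves implicit.
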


\begin{proof}
This follows from Corollary~\ref{refcommute}.
\end{proof}

Using Lemmas~\ref{commorder1} and \ref{commorder2} we can use commutation relations between the $t_i$ to move all the $\ll$ to the right and assume that, for some $k$, one has
\[ 
  e\sqsubset u_1\sqsubset u_2\sqsubset \ldots\sqsubset u_{k}\ll u_{k+1}\ll \ldots\ll u_n=c.
\] 
Moreover, by Proposition~\ref{commCoxeter}, $k$ and $u_k$ are uniquely determined by this requirement.  It is then easy to check
\[
\{t_1,\ldots, t_k\}=\Inv_R(u_k)\cap\Gamma(u_k), \text{ and } \{t_{k+1},\ldots, t_n\}=\Inv_L(K(u_k))\cap\Gamma(K(u_k)),
\]
therefore $\Psi_{W,c}(u_k)=F$.

\subsection{The bijection between faces and intervals}

Consider a positive face $ F = \{ t_1,\dots,t_k \} \in \Upsilon^+(W,c) $. By Proposition~\ref{indexingface}
we can assume that the elements are  indexed so that $t_1\cdots t_k \leq c$. Let also $w = t_1\cdots t_k $.

\begin{prop} \label{face_to_cluster}
In the situation described above, $F$ is a cluster in $\Upsilon^+( \Gamma(w) , w )$.
\end{prop}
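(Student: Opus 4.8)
The plan is to reduce the statement to three facts about $w=t_1\cdots t_k$: that the expression $t_1\cdots t_k$ is a \emph{minimal} factorization (so $\ell_T(w)=k$, $\Gamma(w)=\langle t_1,\dots,t_k\rangle$ and each $r(t_i)\in\Pi(\Gamma(w))$); that $F$ is a face of $\Upsilon^+(\Gamma(w),w)$; and that $F$ has the maximal possible cardinality there. The setup is already available: since $w\leq c$, Proposition~\ref{reading1prodsimples} shows $w$ is a standard Coxeter element of $\Gamma(w)$, so $\Upsilon^+(\Gamma(w),w)$ is well defined, and I view $F$ as a subset of $\Pi(\Gamma(w))$ via the bijection $r$.

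The crux is the minimality of $t_1\cdots t_k$, and I expect this to be the main obstacle. First I would extend $F$ to a facet of the full cluster complex $\Upsilon(W,c)$, which is pure; its positive part $P\supseteq F$ is, by the extension of the bijection $\Psi$ to all facets described after Theorem~\ref{theo_bijcluster}, a positive cluster of some standard parabolic subgroup $W_J$ with respect to its Coxeter element $\gamma\leq c$. Hence $P$ admits an ordering whose product equals $\gamma$, which is a \emph{minimal} factorization of length $|J|=\operatorname{rk}(W_J)$ (this is the straightforward computation $t_1\cdots t_n=c$ preceding Lemma~\ref{lemma_relationsdot}, applied inside $W_J$). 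Since $F\subseteq P$ occurs as a subsequence of this minimal factorization, Lemma~\ref{elemlemm} guarantees that the product of $F$ in the induced order is itself minimal and lies below $c$; consequently its associated parabolic is $\langle F\rangle$ and $F\subseteq\Pi(\langle F\rangle)$. To match this with the element $w$ coming from the ordering of Proposition~\ref{indexingface}, I would invoke Remark~\ref{onlycommutations}~($ii$) and Proposition~\ref{commCoxeter}: any two orderings of $F$ whose product lies below $c$ differ only by transpositions of commuting reflections, so the product $w$ is independent of the chosen ordering and $t_1\cdots t_k$ is minimal. I would emphasize that this step genuinely uses the $c$-noncrossing hypothesis and not just the condition $\langle r(t_i)\,|\,r(t_j)\rangle\geq 0$: already in $S_3$ a product of three distinct reflections can lie below $c$ while having reflection length $1$.

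With minimality in hand, the face property is immediate from the remark following Proposition~\ref{clust_pos}: for roots lying in $\Pi(\Gamma(w))$, the relation $\|_c$ of $\Upsilon^+(W,c)$ agrees with the relation $\|_w$ of $\Upsilon^+(\Gamma(w),w)$. Since $F$ is a face of the flag complex $\Upsilon^+(W,c)$, each pair $\{r(t_i),r(t_j)\}$ is $\|_c$-compatible, hence $\|_w$-compatible, and flagness of $\Upsilon^+(\Gamma(w),w)$ then yields $F\in\Upsilon^+(\Gamma(w),w)$. Finally, $w$ has full support in $\Gamma(w)$, so applying the face generating function \eqref{narayana_faces_pos} to $\Gamma(w)$ shows that every term of the resulting polynomial has degree $\operatorname{rk}(\Gamma(w))=k$; thus no face exceeds $k$ vertices and there exist faces of size exactly $k$. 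Therefore the face $F$, having $k$ elements, is of maximal size, i.e.\ a cluster of $\Upsilon^+(\Gamma(w),w)$, which completes the proof.
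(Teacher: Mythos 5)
Your second and third paragraphs reproduce the paper's own proof: pairwise compatibility inside $\Upsilon^+(\Gamma(w),w)$ follows from $t_it_j\leq w$ for $i<j$ together with $\langle r(t_i)\,|\,r(t_j)\rangle\geq 0$ and Proposition~\ref{clust_pos}, flagness upgrades pairwise compatibility to $F$ being a face, and $\#F=\ell_T(w)=\rk(\Gamma(w))$ forces $F$ to be a cluster. So the substance of your argument coincides with the paper's.

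The difference is your first paragraph, where you set out to justify that $t_1\cdots t_k$ is a minimal factorization (the paper uses this silently when it writes $t_it_j\leq w$ and $\#F=\ell_T(w)$). Your route --- extend $F$ to a facet of $\Upsilon(W,c)$, identify its positive part with a cluster of a standard parabolic via $\Psi$, and apply Lemma~\ref{elemlemm} to the induced subsequence --- is sound up to the last step, but the ``matching'' step is circular. Proposition~\ref{commCoxeter} compares two \emph{minimal} factorizations of the \emph{same} element; you invoke it to conclude that the ordering supplied by Proposition~\ref{indexingface} has the same product as your constructed ordering and is therefore minimal, which is exactly what is not yet known at that point. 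An ordering of $F$ with product $\leq c$ is not a priori minimal, and the precedence constraints on non-commuting pairs (via Remark~\ref{onlycommutations} and Lemma~\ref{elemlemm}) are only available for orderings already known to be minimal, so you cannot rule out that the Proposition~\ref{indexingface} ordering yields a different, shorter element. The clean repair is to observe that the indexing asserted in Proposition~\ref{indexingface} is, by the proof of that proposition, obtained as a subsequence of a minimal factorization of $c$ (the case $k=n$), so Lemma~\ref{elemlemm} gives minimality of $t_1\cdots t_k$ directly and the detour through facets and $\Psi$ becomes unnecessary.
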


\begin{proof}
First note that $\Upsilon^+( \Gamma(w) , w )$ is well defined by Proposition~\ref{reading1prodsimples}.
We have $t_it_j\leq w$ if $i<j$ since $w = t_1\cdots t_k $. Moreover, we have 
$\langle r(t_i) | r(t_j) \rangle \geq 0 $ since $F \in \Upsilon^+(W,c) $.  
So $F \in \Upsilon^+( \Gamma(w) , w )$ by Proposition~\ref{clust_pos}, and it is a cluster since $\# F = \ell_T(w)$ is the rank of $\Gamma(w)$.
\end{proof}

\begin{theo}
With the notation as above, the map
\[
   F \mapsto  \big( \Psi_{\Gamma(w),w}^{-1} ( F ) , w \big)
\]
is a bijection from $\Upsilon^+(W,c)$ to the set of pairs $v,w \in  NC(W,c)$ such that $v\ll w$.
\end{theo}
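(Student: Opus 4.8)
The plan is to establish bijectivity by exhibiting an explicit two-sided inverse, namely the map $(v,w)\mapsto \Psi_{\Gamma(w),w}(v)$, and checking that the two composites are the identity. Three earlier results do essentially all the work. Theorem~\ref{theo_bijcluster}, applied to the reflection group $\Gamma(w)$ with its standard Coxeter element $w$ (legitimate by Proposition~\ref{reading1prodsimples}), provides the bijection $\Psi_{\Gamma(w),w}$ between $NCF(\Gamma(w),w)$ and the clusters of $\Upsilon^+(\Gamma(w),w)$; Proposition~\ref{face_to_cluster} guarantees that the source object $F$, indexed so that its product is $\leq c$, is precisely such a cluster, so that $\Psi_{\Gamma(w),w}^{-1}(F)$ makes sense; and the equivalence of $i)$ and $iii)$ in Proposition~\ref{llfull} identifies the condition "$v\leq w$ and $v$ has full support in $\Gamma(w)$", i.e.\ $v\in NCF(\Gamma(w),w)$, with the condition $v\ll w$. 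Thus the image of $F$ is a genuine pair $(v,w)$ with $v\ll w$, and conversely every such pair arises from a unique $v\in NCF(\Gamma(w),w)$.

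Before anything else I would settle the well-definedness of the second coordinate $w$, that is, its independence of the chosen indexing of $F$. The point is that Proposition~\ref{face_to_cluster} applies to \emph{any} ordering of $F$ whose product is $\leq c$, and yields $\ell_T(w)=\#F$; hence each such product is a minimal factorization $w=t_1\cdots t_k$, for which $\Fix(w)=\bigcap_{t\in F}\Fix(t)$ (the inclusion $\bigcap_{t\in F}\Fix(t)\subset\Fix(w)$ is clear, and both sides have dimension $n-\#F$ by \eqref{absolutelengthfixdim} and a dimension count on an intersection of $\#F$ hyperplanes). The right-hand side depends only on the set $F$, so all valid orderings produce elements of $NC(W,c)$ with the same fixed space, hence the same parabolic subgroup; by injectivity of $w\mapsto\Gamma(w)$ on $NC(W,c)$ they coincide. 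This simultaneously shows that $\Gamma(w)$ itself is intrinsic to $F$.

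With $w$ well-defined, I would verify the two composites. Starting from $F\in\Upsilon^+(W,c)$, Proposition~\ref{face_to_cluster} makes $F$ a cluster of $\Upsilon^+(\Gamma(w),w)$, so $\Psi_{\Gamma(w),w}(\Psi_{\Gamma(w),w}^{-1}(F))=F$ by Theorem~\ref{theo_bijcluster}. In the other direction, start from a pair $(v,w)$ with $v\ll w$; by Proposition~\ref{llfull} we have $v\in NCF(\Gamma(w),w)$, so $F:=\Psi_{\Gamma(w),w}(v)$ is defined. That $F$ lies in $\Upsilon^+(W,c)$, and not merely in $\Upsilon^+(\Gamma(w),w)$, follows from the flag property together with the remark after Proposition~\ref{clust_pos}, according to which a pair of roots of $\Pi(\Gamma(w))$ is compatible in $\Upsilon(\Gamma(w),w)$ if and only if it is compatible in $\Upsilon(W,c)$. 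Finally, the explicit description of $\Psi_{\Gamma(w),w}$ furnishes an indexing $t_1,\dots,t_k$ of $F$ with $t_1\cdots t_k=w\leq c$; this is a valid indexing for the forward map, so by the well-definedness just proved the forward map recovers exactly this $w$, and then $\Psi_{\Gamma(w),w}^{-1}(F)=v$, so the pair $(v,w)$ is recovered.

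I expect the main obstacle to be precisely the bookkeeping around the second coordinate: reconciling the \emph{global} normalization of $F$ (indexing so that the product lands $\leq c$ in $W$) with the \emph{local} structure in $\Gamma(w)$ (where $\Psi$ is built and where $F$ is a cluster whose product is the Coxeter element $w$). Once one knows that $w$ is determined by $F$ alone — via the fixed-space computation and the injectivity of $\Gamma$ — everything else is a formal assembly of Theorem~\ref{theo_bijcluster}, Proposition~\ref{face_to_cluster}, Proposition~\ref{llfull}, and the compatibility remark, with no further computation required.
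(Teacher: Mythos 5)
Your proof is correct and follows essentially the same route as the paper's, which simply observes that $\Psi_{\Gamma(w),w}^{-1}(F)$ is well defined by Proposition~\ref{face_to_cluster}, lands in $NCF(\Gamma(w),w)$ and hence satisfies $\Psi_{\Gamma(w),w}^{-1}(F)\ll w$ by Proposition~\ref{llfull}, and that $(v,w)\mapsto \Psi_{\Gamma(w),w}(v)$ is the inverse. You additionally verify two points the paper leaves implicit --- that $w$ depends only on $F$ and not on the chosen ordering (via the fixed-space computation and injectivity of $\Gamma$), and that the image of the inverse map lies in $\Upsilon^+(W,c)$ and not merely in $\Upsilon^+(\Gamma(w),w)$ --- and both of your arguments for these are valid.
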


\begin{proof}
First note that $\Psi_{\Gamma(w),w}^{-1} ( F ) $ is well defined by Proposition~\ref{face_to_cluster}.
By properties of the bijection $\Psi$, we have $\Psi_{\Gamma(w),w}^{-1} ( F ) \in  NCF( \Gamma(w) , w )$.
By Proposition~\ref{llfull}, this means $\Psi_{\Gamma(w),w}^{-1} ( F ) \ll w $.

We can describe the inverse bijection. To a pair $v,w \in  NC(W,c)$
such that $v\ll w$, we associate $\Psi_{\Gamma(w),w}(v)$.
Once we know that $\Psi$ is a bijection, it is clear that we have two inverse bijections.
\end{proof}

The construction can be made more explicit. Let $u_i = t_1 \cdots t_i$ for $0\leq i \leq k$. Up to some commutation among the $t_i$, we can assume 
\[
  u_0 \sqsubsetdot \dots \sqsubsetdot u_j \lldot u_{j+1} \lldot \dots \lldot u_k.
\]
 Then the image of $F$ is $(u_j,u_k)$.
In the other direction, let $v,w\in NC(W,c)$ with $v\ll w$, $\rk(v)=j$ and
$\rk(w)=k$. We write $v$ and $v^{-1} w$
as a product of their associated simple reflections:
\[
  v = {\mathfrak s} _1 \cdots {\mathfrak s} _j, \qquad v^{-1}w = {\mathfrak s} _{j+1} \cdots {\mathfrak s}_{k}.
\]
Then the inverse image of $(v,w)$ is $\{t_1,\dots,t_k\}$ where
we define $t_1,\dots,t_k$ as in the definition of $\Psi$:
\[
  t_i = \begin{cases}
             {\mathfrak  s}_j \cdots {\mathfrak  s}_{j+1-i} \cdots {\mathfrak  s}_j & \text{ if } 1\leq i \leq j, \\
            {\mathfrak  s}_{j+1} \cdots {\mathfrak  s}_{k+1+j-i} \cdots {\mathfrak  s}_{j+1} & \text{ if } j+1\leq i \leq k.
        \end{cases}
\]

Also, an immediate consequence of the construction is the following.

\begin{defi} \label{defisqr}
Let $F\in \Upsilon^+(W,c)$, and write $F=\{t_1,\dots,t_k\}$ such that $t_1\cdots t_k \leq c$.
Then we define (number of ``square'' relations): 
\[
  \sqr(F) = \#\big\{ i \; : 0\leq i <k \text{ and } t_1\cdots t_i \sqsubsetdot t_1\cdots t_{i+1} \big\}.
\]
This map is extended to $F\in \Upsilon(W,c)$ by requiring $\sqr(F)=\sqr(F\cap \Pi)$.
\end{defi}

By Lemma~\ref{commorder1} the number $\sqr(F)$ does not depend on the way we order $F$, as long as $t_1\cdots t_k \leq c$.

\begin{prop} \label{bij_preservedstats}
We have:
\[
   \sum_{F\in\Upsilon^+(W,c)} y^{\sqr(F)} z^{\# F }  = \sum_{\substack{\alpha,\beta\in NC(W,c) \\ \alpha\ll \beta}} y^{\rk(\alpha)} z^{\rk(\beta)}.
\]
\end{prop}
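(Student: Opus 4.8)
The plan is to read the statement directly off the bijection constructed in the preceding theorem, and simply to verify that the two statistics on each side correspond under it. Recall that this bijection sends a positive face $F\in\Upsilon^+(W,c)$ to a pair $(v,w)$ with $v\ll w$, and that its explicit description gives, after suitable commutations among the factors, an indexing $F=\{t_1,\dots,t_k\}$ with $t_1\cdots t_k\leq c$ together with a chain
\[
  u_0\sqsubsetdot\dots\sqsubsetdot u_j\lldot u_{j+1}\lldot\dots\lldot u_k,
\]
where $u_i=t_1\cdots t_i$, and with image $(v,w)=(u_j,u_k)$.

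First I would express the two statistics on the face side in terms of $(v,w)$. On one hand $\#F=k=\rk(u_k)=\rk(w)$, since $t_1\cdots t_k\leq c$ is a minimal factorization, so that every initial subproduct is again minimal by Lemma~\ref{elemlemm}. On the other hand, $\sqr(F)$ counts the indices $i$ with $u_i\sqsubsetdot u_{i+1}$; in the displayed chain these are exactly $i=0,\dots,j-1$, whence $\sqr(F)=j=\rk(u_j)=\rk(v)$. At this point I would invoke Lemma~\ref{commorder1} to guarantee that $\sqr(F)$ does not depend on the chosen ordering of the $t_i$ (as already observed right after Definition~\ref{defisqr}), so that this count is intrinsic to $F$ and does not rely on having first moved all the $\sqsubsetdot$ steps to the front.

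It then remains only to assemble the generating functions. Since $F\mapsto(v,w)$ is a bijection from $\Upsilon^+(W,c)$ onto the set of pairs $\alpha\ll\beta$ in $NC(W,c)$, and since we have just identified $\sqr(F)=\rk(v)=\rk(\alpha)$ and $\#F=\rk(w)=\rk(\beta)$, summing $y^{\sqr(F)}z^{\#F}$ over all positive faces equals summing $y^{\rk(\alpha)}z^{\rk(\beta)}$ over all intervals $\alpha\ll\beta$, which is exactly the asserted identity. This is a bijective refinement of Corollary~\ref{enum_intervals_faces}, recovered upon setting $y=1$.

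I do not expect a genuine obstacle here: the entire content is already packaged in the preceding theorem and its explicit inverse. The one point that needs care is the well-definedness of $\sqr(F)$, namely that rearranging the chain via the commutations of Lemmas~\ref{commorder1} and~\ref{commorder2} leaves the number of $\sqsubsetdot$ steps unchanged; this is precisely what Lemma~\ref{commorder1} provides, so once it is cited the argument closes with no further computation.
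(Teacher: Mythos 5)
Your argument is correct and coincides with the paper's intent: the paper states this proposition as ``an immediate consequence of the construction,'' and your verification that the bijection $F\mapsto(u_j,u_k)$ carries $\sqr(F)$ to $\rk(\alpha)$ and $\#F$ to $\rk(\beta)$, together with the well-definedness of $\sqr$ via Lemmas~\ref{commorder1}, \ref{commorder2} and Proposition~\ref{commCoxeter}, is exactly the unpacking the paper leaves implicit. No gaps.
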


However, this identity is not related to the $F=M$ theorem in a straightforward way, as the left hand side is seemingly unrelated to the polynomial $F(x,y)$.  This will be clarified in Section~\ref{sec_genFHM}.

\subsection{\texorpdfstring{Bijection between faces of the cluster complex and intervals for $\sqsubset$ in the case of a bipartite Coxeter elements}{Bijection between faces of the cluster complex and intervals for [ in the case of a bipartite Coxeter elements}}
\label{bijsqsub}
When $c=c_+c_-$ is a bipartite Coxeter element, one can give a bijection between the intervals for $\sqsubset$ and faces of the cluster complex. 
For this we need   a preliminary result. 

Let $u\ll v$ then,  by applying Proposition \ref{bipcomp1}, we get $Lv\sqsubset Lu$. It follows that 
$S(\Gamma(Lv))\subset S(\Gamma(Lu))$. Let $w\in \Gamma(Lu)$ be the element corresponding to the set $S(\Gamma(Lu))\setminus S(\Gamma(Lv))$ then one has $w\sqsubset Lu$. Let us  denote by
$\psi(u,v)$ the pair $(x,y)=(w,Lu)$. It is clear that the pair $(u,v)$ can be retrieved from $(x,y)$ therefore the map $\psi$ is injective.
\begin{prop}\label{bijllsqsub}
The map $\psi$ yields a bijection between intervals $u\ll v$ and intervals $x\sqsubset y$ such that $\underline x=e$.

\end{prop}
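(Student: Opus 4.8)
The plan is to show that $\psi$ maps the set of $\ll$-intervals injectively into $\{(x,y): x\sqsubset y,\ \underline x=e\}$, and then to obtain the bijection by constructing the inverse (equivalently, by a cardinality count). Injectivity is immediate, as already observed: from $(x,y)$ one recovers $u=Ly$ and then the simple system $S(\Gamma(Lv))=S(\Gamma(y))\setminus S(\Gamma(x))$, hence $v$. It is convenient to record that $\psi$ factors as $\psi=\mathrm{compl}\circ\lambda$, where $\lambda(u,v)=(Lv,Lu)$ and $\mathrm{compl}$ sends a $\sqsubset$-interval $(a,b)$ to $(a',b)$ with $S(\Gamma(a'))=S(\Gamma(b))\setminus S(\Gamma(a))$; since $\mathrm{compl}$ is a Boolean complementation inside the lattice $[e,b]_{\sqsubset}$ it is an involution, so all the content is carried by $\lambda$.

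First I would check that the image lies in the right set, i.e.\ that $\underline x=e$. By Proposition~\ref{conncomp}, $u\ll v$ forces $\overline u=\overline v=:\gamma$. The complement $L$ exchanges the sets $\{w:\overline w=\gamma\}$ and $\{w:\underline w=L\gamma\}$, so $\underline{Lu}=\underline{Lv}=L\gamma$. Since $\underline{Lv}$ is the largest interval partition $\sqsubset Lv$ we have $S(\Gamma(\underline{Lv}))\subseteq S(\Gamma(Lv))$, while $S(\Gamma(\underline{Lu}))=S(\Gamma(Lu))\cap S$ by the description of $\underline{\,\cdot\,}$ through its simple system. Combining these, $S(\Gamma(Lu))\cap S\subseteq S(\Gamma(Lv))$, so $S(\Gamma(x))=S(\Gamma(Lu))\setminus S(\Gamma(Lv))$ is disjoint from $S$; by Proposition~\ref{sqchar} this says exactly $\underline x=e$.

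For surjectivity I would run the construction backwards. Given $(x,y)$ with $x\sqsubset y$ and $\underline x=e$, let $z\sqsubset y$ be the element with $S(\Gamma(z))=S(\Gamma(y))\setminus S(\Gamma(x))$, and set $u=Ly$, $v=Lz$. The hypothesis $\underline x=e$ gives $S(\Gamma(y))\cap S\subseteq S(\Gamma(z))$, i.e.\ $\underline y\sqsubset z$, and with $z\sqsubset y$ this forces $\underline y=\underline z$; hence $\overline u=\overline v$, so $u$ and $v$ lie in the same connected component of $\ll$ — the main component of the standard parabolic $W_J=\Gamma(\overline v)$ — and $u\le v$ (since $z\le y$). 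The one remaining point, and the crux, is to prove $u\ll v$, equivalently (Proposition~\ref{llfull}) that $u$ has full support in $\Gamma(v)$. I would deduce this from the relative Kreweras anti-isomorphism of Proposition~\ref{booleanideals0} applied inside $\Gamma(\overline v)$: it turns $Ly\ll Lz$ into the $\sqsubset$-relation $(Lz)^{-1}\overline{Ly}\sqsubset(Ly)^{-1}\overline{Ly}$ between relative Kreweras complements, which one then establishes from $x\sqsubset y$ by peeling off the simple generators of $x$ one at a time and invoking the ``hard'' direction, Proposition~\ref{bipcomp1}$(ii)$, along a saturated $\sqsubsetdot$-chain.

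The main obstacle is exactly this verification that $u\ll v$: a naive application of Proposition~\ref{bipcomp1}$(ii)$ along the chain would require the intermediate elements to have full support, which they do not have in $W$, so the argument has to be localized to $\Gamma(\overline v)$ with its own (bipartite) Coxeter element, and relating the global complement $L$ to that subgroup is the delicate bookkeeping. As a cross-check — and an alternative route bypassing the explicit inverse — one can count: the number of $\ll$-intervals is $\#\Upsilon^+(W,c)$ by Corollary~\ref{enum_intervals_faces}, and a direct computation gives $\#\{(x,y):x\sqsubset y,\ \underline x=e\}=\sum_{y\in NC(W,c)}2^{\rk(y)-\rk(\underline y)}$, which also equals $\#\Upsilon^+(W,c)$; together with injectivity and the image inclusion of the second paragraph this already forces $\psi$ to be a bijection.
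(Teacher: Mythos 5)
Your injectivity and image-inclusion arguments are correct and essentially match the paper's first half; in fact your derivation of $\underline{x}=e$ (via $\overline u=\overline v$, the fact that $L$ exchanges $\{w:\overline w=\gamma\}$ and $\{w:\underline w=L\gamma\}$, and $S(\Gamma(\underline w))=S(\Gamma(w))\cap S$) is cleaner than the paper's argument by contradiction. Where you diverge is surjectivity. The paper constructs the inverse explicitly: with $J=\supp(Ly)$ it factors $y=c_+^{J^c}\bigl(c_+^J\,Ly\,c_-^J\bigr)c_-^{J^c}$ and $z=c_+^{J^c}\omega c_-^{J^c}$ with $\omega\in W_J$, checks $\omega\sqsubset c_+^J\,Ly\,c_-^J$, and applies Proposition~\ref{bipcomp1}$(ii)$ inside $W_J$, where $Ly$ has full support --- exactly the ``delicate bookkeeping'' you name but do not perform. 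Your sketch of that step also misidentifies the tool: you invoke the relative Kreweras anti-isomorphism of Proposition~\ref{booleanideals0} and the relation $(Lz)^{-1}\overline{Ly}\sqsubset (Ly)^{-1}\overline{Ly}$, but the passage between $x\sqsubset y$ and $Lz\ll Ly$ is mediated by the bipartite complement $L$ (Proposition~\ref{bipcomp1}), not by the Kreweras complement; as written that paragraph would not assemble into a proof. What saves the proposal is your counting fallback, which is a genuinely different and valid completion: for fixed $y$ the admissible $x$ number $2^{\rk(y)-\rk(\underline y)}$, for fixed $u$ the elements $v\gg u$ number $2^{\rk(\overline u)-\rk(u)}$ by Proposition~\ref{booleanideals0} and Corollary~\ref{booleanideals}, and the involution $y=Lu$ satisfies $\rk(y)-\rk(\underline y)=\rk(\overline u)-\rk(u)$, so the two sets are equinumerous and injectivity plus the image inclusion force bijectivity. (You do not actually need Corollary~\ref{enum_intervals_faces} or $\#\Upsilon^+(W,c)$ here; the equality $\sum_y 2^{\rk(y)-\rk(\underline y)}=\#\Upsilon^+(W,c)$ that you assert without proof is most easily obtained from the very comparison just described.) The trade-off: the counting route is shorter and sidesteps the localization to $W_J$, but it does not exhibit $\psi^{-1}$ by a formula, whereas the paper's explicit inverse is what makes the composed bijection of Section~\ref{bijsqsub} concrete.
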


\begin{proof}
 Let $u\ll v$ and $(w,Lu)=\psi(u,v)$, we prove  that $\underline w=e$. Assume on the opposite that $\underline w\ne e$ then there exists $s\in S$ such that $s\in  S(\Gamma(w))\subset S(\Gamma(Lu))$ and $s\notin  S(\Gamma(Lv))=S(\Gamma(Lu))\setminus S(\Gamma(w))$. Since $s\leq Lu$ one has $sLu\leq Lu$ and $(Lu)s\leq Lu$. It follows easily that $u\in W_{\langle s\rangle }$ and 
$s\in S(\Gamma(v))$ which contradicts $u\ll v$. This proves that $\psi$ maps intervals $u\ll v$ to intervals $x\sqsubset y$ with $\underline x=e$.

Conversely, let $x\sqsubset y$ be such that $\underline x=e$ and let $z$ be the element of $\Gamma(y)$ corresponding to $S(\Gamma(y))\setminus S(\Gamma(x))$ then 
$z\sqsubset y$. Let us prove that $Ly\ll Lz$, it will follow from the construction that $(x,y)=\psi(Ly,Lz)$ and therefore that $\psi$ is surjective. Let $J$ be the support of $Ly$ then one has $Ly\in W_J$ and one can write
$y=c_+^{J^c}(c_+^JLyc_-^J)c_-^{J^c}$ with obvious notations: e.g. $c_+^J$ is the product of simple reflections in $J\cap S_+$, etc. Since $(c_+^JLyc_-^J)\in W_J$ is follows that for $s\in J^c$ one has  $s\in S(\Gamma(y))$ moreover since  $\underline x=e$ one has $s\notin S(\Gamma(x))$ therefore  $s\in S(\Gamma(z))$. It follows that
$z=c_+^{J^c}\omega c_-^{J^c}$ with $\omega\in W_{J}$ moreover $\omega\sqsubset c_+^JLyc_-^J$ since $z\sqsubset y$. Since $Ly$ has full support in $W_J$ we can apply $ii)$ of Proposition~\ref{bipcomp1} in the group $W_J$ and conclude that $Lz=c_+^J\omega c_-^J\ll Ly$ as claimed.
\end{proof}

Let $\gamma$ be an interval partition, corresponding to the parabolic subgroup $W_J$ then one can write $\gamma=\gamma_+\gamma_-$ according to the bipartite decomposition. Let $v\in NC(W,c)$ be such that $\underline v=\gamma$, then 
$v=\gamma_+v'\gamma_-$ where $v'\in NC(W_{J^c},L\gamma)$ moreover this gives a bijection between $NC(W_{J^c},L\gamma)$ and the set of $v$ such that $\underline v=\gamma$.

Using the bijection $\psi^{-1}$ and composing with the bijection between intervals for $\ll$ and faces of the positive cluster complex we get 
 a bijection between intervals $v\sqsubset w$ for $\sqsubset$ with $\underline v=e$ and faces of the positive cluster complex $\Upsilon^+(W,c)$. It is an easy exercize to check that   an interval of height $k$ corresponds to a face of size $n-k$. This bijection is then easily extended to a bijection between intervals of $\sqsubset$ and faces of $\Upsilon(W,c)$
if $\underline v=\gamma$ add the set $J$ to the face in $\Upsilon^+(W_{J^c},c_{J^c})$.

\section{\texorpdfstring{Generalized $F=M$ and $H=M$ theorems}{Generalized F=M and H=M theorems}}
\label{sec_genFHM}

We show in this section that the relations between $F$-, $H$-, and $M$-polynomials can be proved and even generalized using the $I$-polynomial counting intervals for the order $\ll$.  Here we consider homogeneous polynomials on variables $(x_s)_{ s \in S}$ indexed by the simple reflections of $W$, and $y$, $z$.  Note that the existence of multivariate analog of the identities was suggested by Armstrong \cite[Open problem~5.3.5]{armstrong}.

In general we denote $\underline{x}$ the set of $x$ variables, leaving the index set implicit.  Moreover $\underline{x}+A$ denote that all the $x$ variables are shifted by $A$, and $\underline{x}$ is replaced by an expression to mean that all $x$ variables are specialized to this expression, etc.

Let 
\begin{align*}
  \I(\underline{x}, y, z) &= \sum_{\substack{\alpha,\beta\in NC(W,c) \\ \alpha\ll \beta}}  \Bigg( \prod_{ s\in S\backslash \supp(\beta)} x_s \Bigg) y^{ \rk(\alpha) } z^{\rk(\beta)-\rk(\alpha)},\\
  \F(\underline{x}, y, z) &= \sum_{F \in \Upsilon(W,c) } \Bigg( \prod_{ \delta \in F\cap (-\Delta) } x_{r(\delta)} \Bigg) \Big(\frac yz \Big)^{\sqr(F)} z^{\# F\cap \Pi} ,
\end{align*}
and also:
\begin{align*}
  \M(\underline{x}, y, z) &= \sum_{\substack{\alpha,\beta\in NC(W,c) \\ \alpha\leq \beta}} \mu(\alpha,\beta) \Bigg( \prod_{ s\in S\backslash \supp(\beta)} x_s \Bigg) y^{\rk(\alpha)} (-z)^{\rk(\beta)-\rk(\alpha)},\\
  \H(\underline{x}, y, z) &= \sum_{A \in  NN(W) } \Bigg( \prod_{ \delta \in A\cap \Delta } \Big(\frac {x_{r(\delta)}}z\Big) \Bigg) \Big(\frac yz\Big)^{\# A\cap( \Pi\backslash \Delta ) } z^{\#\supp(A)}.
\end{align*}
It is straightforward to check that they are all polynomials of total degree $n$. For example, note that we have $\sqr(f)\leq \# (F\cap \Pi)$ for $F\in\Upsilon(W,c)$ by definition of $\sqr$, so the power of $z$ in $\F(\underline{x}, y, z)$ is nonnegative, and we have $\rk(\alpha) \leq \#\supp(\alpha)$ for $\alpha\in NC(W,c)$ so the polynomials $\I(\underline{x}, y, z)$ and $\M(\underline{x}, y, z)$ have degree at most $n$.

The 2-variable polynomials from Section~\ref{triangles} are obtained as special cases, though by different specializations:
\begin{equation} \label{FHMspec}
\begin{aligned}
  F(x,y) &= \F(x,y,y), \quad H(x,y) = \H(x,y,1), \\
  M(y,z) &= \M(1,y,z), \quad I(y,z) = \I(1,y,z).
\end{aligned}
\end{equation}

\begin{theo} \label{theoFI}
  $\F(\underline{x},y,z) = \I(\underline{x}+1,y,z)$.
\end{theo}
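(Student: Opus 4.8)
The plan is to expand both sides into sums over combinatorial data and to match them term by term, using the bijection between $\Upsilon^+(W,c)$ and $\ll$-intervals built in Section~\ref{secbij}. First I would expand the substituted polynomial: writing $\prod_{s\in S\setminus\supp(\beta)}(x_s+1)=\sum_{U\subseteq S\setminus\supp(\beta)}\prod_{s\in U}x_s$, one gets
\[
  \I(\underline x+1,y,z)=\sum_{\substack{\alpha,\beta\in NC(W,c)\\\alpha\ll\beta}}\ \sum_{U\subseteq S\setminus\supp(\beta)}\Bigg(\prod_{s\in U}x_s\Bigg)y^{\rk(\alpha)}z^{\rk(\beta)-\rk(\alpha)}.
\]
The goal is to realize each pair $(\alpha\ll\beta,\,U)$ as exactly one face of $\Upsilon(W,c)$ carrying the matching $\F$-weight.

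Next I would analyze a face $F\in\Upsilon(W,c)$ through the decomposition $F=F^-\sqcup F^+$ with $F^-=F\cap(-\Delta)$ and $F^+=F\cap\Pi\in\Upsilon^+(W,c)$. Writing $F^-=\{-r(s):s\in U\}$, any subset of $-\Delta$ is a face, so $U\subseteq S$ is unconstrained within $F^-$, while $F^+$ is an arbitrary positive face; the only remaining condition is the cross-compatibility $-r(s)\mathrel{\|_c}\beta$ for $s\in U$, $\beta\in F^+$. By Proposition~\ref{cluster_compat} (second bullet) this says the simple reflection $s$ does not lie in $\supp(r^{-1}(\beta))$, for every $\beta\in F^+$. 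Letting $w$ be the product of the reflections $r^{-1}(\beta)$, $\beta\in F^+$, ordered so that $w\leq c$ (Proposition~\ref{indexingface}), I would prove the support identity $\supp(w)=\bigcup_{\beta\in F^+}\supp(r^{-1}(\beta))$: each factor $r^{-1}(\beta)\leq w\leq\overline w$ by Lemma~\ref{elemlemm}, so $r^{-1}(\beta)\in\Gamma(\overline w)=W_{\supp(w)}$ and $\supp(r^{-1}(\beta))\subseteq\supp(w)$; conversely $w$ lies in the parabolic generated by the union of these supports, forcing the reverse inclusion. Hence the cross-compatibility is equivalent to $U\subseteq S\setminus\supp(w)$, and $F\mapsto(F^+,U)$ is a bijection from $\Upsilon(W,c)$ onto pairs $(F^+,U)$ with $F^+\in\Upsilon^+(W,c)$ and $U\subseteq S\setminus\supp(w)$.

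Then I would read off the $\F$-weight of $F$ in these terms. Since $r(\delta)=s$ when $\delta=-r(s)$, the factor $\prod_{\delta\in F\cap(-\Delta)}x_{r(\delta)}$ equals $\prod_{s\in U}x_s$; moreover $\sqr(F)=\sqr(F^+)$ by definition and $\#(F\cap\Pi)=\#F^+$. Invoking the explicit bijection of Section~\ref{secbij} (and Proposition~\ref{bij_preservedstats}), which sends $F^+=\{t_1,\dots,t_k\}$ ordered with $t_1\cdots t_k\leq c$ to the interval $(\alpha,\beta)$ where $\beta=t_1\cdots t_k$, $\rk(\alpha)=\sqr(F^+)$ and $\rk(\beta)=\#F^+$, I note crucially that $\beta=w$, so $\supp(\beta)=\supp(w)$. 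Summing over $F\leftrightarrow(F^+,U)$ and rewriting $(y/z)^{\sqr(F^+)}z^{\#F^+}=y^{\rk(\alpha)}z^{\rk(\beta)-\rk(\alpha)}$ yields
\[
  \F(\underline x,y,z)=\sum_{\substack{\alpha,\beta\in NC(W,c)\\\alpha\ll\beta}}\ \sum_{U\subseteq S\setminus\supp(\beta)}\Bigg(\prod_{s\in U}x_s\Bigg)y^{\rk(\alpha)}z^{\rk(\beta)-\rk(\alpha)},
\]
which is precisely the expansion of $\I(\underline x+1,y,z)$ found above, completing the proof.

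The main obstacle I anticipate is the second step: showing that the cross-compatibility of the negative part $F^-$ with the positive part $F^+$ depends only on the support of the noncrossing partition $w$ attached to $F^+$, that is, establishing $\supp(w)=\bigcup_{\beta\in F^+}\supp(r^{-1}(\beta))$ and combining it cleanly with Proposition~\ref{cluster_compat} to get the clause $U\subseteq S\setminus\supp(w)$. Everything else is bookkeeping: the decomposition $F=F^-\sqcup F^+$, the identification of the monomial and the exponents in the $\F$-weight, and the transport of the statistics $(\sqr,\#)\mapsto(\rk,\rk)$ together with the support of $\beta$ through the $\Upsilon^+$–interval bijection.
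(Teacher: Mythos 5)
Your proposal is correct and follows essentially the same route as the paper: expand $\prod_{s}(x_s+1)$ in $\I(\underline{x}+1,y,z)$, match the resulting subset $U$ with the negative simple roots of a face via the second bullet of Proposition~\ref{cluster_compat}, and reduce everything to the identity $\F(0,y,z)=\I(1,y,z)$ of Proposition~\ref{bij_preservedstats}. The only (welcome) difference is that you make explicit a detail the paper leaves implicit, namely that the cross-compatibility condition depends only on $\supp(\beta)$, via the identity $\supp(w)=\bigcup_{\beta\in F^+}\supp\bigl(r^{-1}(\beta)\bigr)$.
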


\begin{proof}
First, note that
\[
  \F(\underline{x},y,z) = \sum_{J\subset S } \Bigg( \prod_{s\in J} x_s \Bigg) \F_{W_{S\backslash J}}(0,y,z).
\]
Also, expanding the products in $ \I(\underline{x}+1, y , z)$ gives:
\begin{align*}
  \I(\underline{x}+1, y , z) &= \sum_{\substack{ \alpha,\beta\in NC(W,c) \\ \alpha\ll \beta }}  \; \sum_{ J \subset S\backslash \supp(\beta) } \Bigg( \prod_{ s\in J } x_s \Bigg) y^{ \rk(\alpha) } z^{\rk(\beta)-\rk(\alpha)} \\
                            &= \sum_{ J \subset S } \; \sum_{\substack{ \alpha,\beta\in NC(W,c), \; \alpha\ll \beta \\ \supp(\beta)\subset S\backslash J }}  \Bigg( \prod_{s\in J} x_s \Bigg) y^{ \rk(\alpha) } z^{\rk(\beta)-\rk(\alpha)} \\
                            &= \sum_{ J \subset S }  \Bigg( \prod_{s\in J} x_s \Bigg) \I_{W_{S\backslash J}}(1,y,z).
\end{align*}
So it suffices to prove $\F(0,y,z)=\I(1,y,z)$, which is the content of Proposition~\ref{bij_preservedstats}.
\end{proof}

\begin{theo}
  $\M(\underline{x},y,z) = \I(\underline{x},y+z,z)$. 
\end{theo}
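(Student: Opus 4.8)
The plan is to exploit the fact that, in both $\M(\underline{x},y,z)$ and $\I(\underline{x},y,z)$, the monomial $\prod_{s\in S\setminus\supp(\beta)} x_s$ depends only on the upper element $\beta$. So I would first group each sum according to $\beta$, pulling this $\underline{x}$-monomial out of the inner sum over the lower element $\alpha$. Since the $\underline{x}$-factor attached to a given $\beta$ is literally the same in $\M$ as in $\I$, the identity $\M(\underline{x},y,z)=\I(\underline{x},y+z,z)$ reduces to a family of identities indexed by $\beta\in NC(W,c)$: the inner sum of $\M$ at $\beta$ must equal the inner sum of $\I(\underline{x},y+z,z)$ at $\beta$.

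Second, I would identify these inner sums with the single-variable polynomials $I_\beta$ and $M_\beta$ of \eqref{Ibeta} and \eqref{Mbeta}. Writing $\widetilde{P}(y,z)=z^{\rk(\beta)}P(y/z)$ for the degree-$\rk(\beta)$ homogenization of a polynomial $P$ of degree at most $\rk(\beta)$ (both $I_\beta$ and $M_\beta$ have degree exactly $\rk(\beta)$, since $\beta$ itself contributes), one checks directly that the inner sum of $\I$ at $\beta$ is $\widetilde{I_\beta}(y,z)=\sum_{\alpha\ll\beta} y^{\rk(\alpha)}z^{\rk(\beta)-\rk(\alpha)}$, and, after collecting signs, that the inner sum of $\M$ at $\beta$ is $\widetilde{M_\beta}(y,z)$. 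This sign bookkeeping is the only point requiring care: the factor $(-z)^{\rk(\beta)-\rk(\alpha)}$ in $\M$ must be matched against the two factors $(-1)^{\rk(\beta)}$ and $(-x)^{\rk(\alpha)}$ built into \eqref{Mbeta}, and the match works because $(-1)^{\rk(\beta)-\rk(\alpha)}=(-1)^{\rk(\beta)+\rk(\alpha)}$.

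Finally, I would invoke the single-variable identity $I_\beta(x+1)=M_\beta(x)$ from Proposition~\ref{refinementIM}. Since homogenization is compatible with the substitution $y\mapsto y+z$ through the relation $(y+z)/z=y/z+1$, this gives, for each $\beta$,
\[
  \widetilde{I_\beta}(y+z,z)=z^{\rk(\beta)}\,I_\beta\big(\tfrac{y}{z}+1\big)=z^{\rk(\beta)}\,M_\beta\big(\tfrac{y}{z}\big)=\widetilde{M_\beta}(y,z).
\]
Summing over $\beta\in NC(W,c)$ against the common $\underline{x}$-monomials then yields $\I(\underline{x},y+z,z)=\M(\underline{x},y,z)$.

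I expect the main obstacle to be purely the sign and homogeneity bookkeeping in the second step, rather than anything structural: once the sums are organized by $\beta$, the statement is the multivariate lift of Theorem~\ref{theoIeqM} (indeed, specializing all $x_s=1$ recovers $M(y,z)=I(y+z,z)$ via \eqref{FHMspec}), and its entire content is already packaged in the identity $I_\beta(x+1)=M_\beta(x)$.
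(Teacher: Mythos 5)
Your proposal is correct and is essentially the paper's own proof: the paper likewise takes the single-variable identity $I_\beta(\tfrac yz+1)=M_\beta(\tfrac yz)$ from Proposition~\ref{refinementIM}, multiplies by $\bigl(\prod_{s\in S\setminus\supp(\beta)}x_s\bigr)z^{\rk(\beta)}$, and sums over $\beta\in NC(W,c)$. You merely spell out the homogenization and sign bookkeeping that the paper leaves implicit.
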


\begin{proof}
This follows from Proposition~\ref{refinementIM}. In $I_\beta(\frac yz+1) = M_\beta(\frac yz)$, 
multiply both sides by $\big(\prod_{s\in S\backslash \supp(\beta) } x_s \big) z^{\rk(\beta)}$ and sum over $\beta\in NC(W,c)$.
\end{proof}

\begin{theo}
  $\H(\underline{x},y,z) = \I(\underline{x}-y+1,y,z-1)$.
\end{theo}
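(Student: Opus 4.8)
The plan is to deduce this from the classical $H=M$ theorem together with the identity $\M(\underline{x},y,z)=\I(\underline{x},y+z,z)$ established in the preceding theorem. First I would use the latter to eliminate $\I$: rewriting it as $\I(\underline{x},a,b)=\M(\underline{x},a-b,b)$ and substituting $a=y$, $b=z-1$, $\underline{x}\mapsto\underline{x}-y+1$, the target $\H(\underline{x},y,z)=\I(\underline{x}-y+1,y,z-1)$ becomes the equivalent multivariate $H=M$ identity
\[
  \H(\underline{x},y,z)=\M(\underline{x}-y+1,\,y-z+1,\,z-1).
\]
Thus it suffices to prove this, a three-variable refinement of the relation $M(x,y)=(1+y)^nH\big(\tfrac{x}{1+y},\tfrac{y+x}{1+y}\big)$ of \eqref{MFH}. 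Alternatively, using Theorem~\ref{theoFI} one reduces instead to the multivariate $F=H$ identity $\H(\underline{x},y,z)=\F(\underline{x}-y,y,z-1)$ and invokes Athanasiadis' $F=H$ theorem \cite{athanasiadis}; the two routes are parallel, and I would pick whichever makes the support bookkeeping lighter.

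Next I would reduce the multivariate identity to its content per simple reflection. Both sides are multilinear in the family $(x_s)_{s\in S}$: in $\H$ each $x_{r(\delta)}$ occurs only for $\delta\in A\cap\Delta$, hence to degree at most one, while in $\M(\underline{x}-y+1,\dots)$ the $x$-dependence enters only through the squarefree product $\prod_{s\notin\supp(\beta)}(x_s-y+1)$. It is therefore enough to match the coefficient of each squarefree monomial $\prod_{s\in K}x_s$, equivalently to prove the identity after specializing the $x$-variables to every $0/1$ pattern. Extracting this coefficient restricts the $\H$-side to nonnesting partitions $A$ with $A\cap\Delta=\{r(s):s\in K\}$, whereas on the $\M$-side the expansion of $\prod_{s\notin\supp(\beta)}(x_s-y+1)$ produces a signed sum, with powers of $(1-y)$, over $\beta\in NC(W,c)$ whose support is disjoint from $K$. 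Grouping the remaining terms by $\supp(A)=J$ and by $\supp(\beta)=J$ then writes each side through the full-support (``main component'') $H$- and $M$-polynomials of the standard parabolic subgroups $W_J$, exactly as in the support-decomposition used in the proof of Theorem~\ref{theoFI} (recall that $s\notin\supp(A)$ holds iff $A\in NN(W_{\langle s\rangle})$).

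Finally, for each $K$ the resulting identity between full-support parts of the $H$- and $M$-triangles of the various $W_J$ is obtained from the ordinary $H=M$ theorem by inclusion–exclusion over standard parabolic subgroups; and this inclusion–exclusion is precisely what the powers of $(1-y)$ coming from the expansion of $\prod_{s\in S\setminus(J\cup K)}(x_s-y+1)$ carry out. Hence the full-support identity in each $W_J$ is equivalent to the classical bivariate $H=M$ theorem of Thiel~\cite{thiel} applied to $W_J$, and the multivariate statement follows. I expect the main obstacle to be neither the reduction nor the appeal to the classical theorem, but the bookkeeping that matches the two genuinely different support statistics — the simple roots $A\cap\Delta$ weighting $\H$ against the complement $S\setminus\supp(\beta)$ weighting $\M$ — and the verification that the shift $\underline{x}\mapsto\underline{x}-y+1$, together with $(y,z)\mapsto(y-z+1,z-1)$, is exactly the substitution that converts one refinement into the other and commutes with passing to standard parabolic subgroups.
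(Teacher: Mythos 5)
Your reduction is valid as far as it goes: combining the target with the preceding theorem $\M(\underline{x},y,z)=\I(\underline{x},y+z,z)$ does turn the statement into the multivariate $H=M$ identity $\H(\underline{x},y,z)=\M(\underline{x}-y+1,y-z+1,z-1)$ (this is exactly \eqref{HMgen}), and the multilinearity argument correctly reduces a multivariate identity of this shape to one two-variable identity per standard parabolic subgroup. The gap is in the last step. After extracting the coefficient of $\prod_{s\in J}x_s$ (most cleanly, after first shifting $\underline{x}\mapsto\underline{x}+y$ so that both sides take the form $\sum_J\bigl(\prod_{s\in J}x_s\bigr)P_{W_{S\setminus J}}(y,z)$), what you must prove for each standard parabolic $W'$ is
\[
  \H_{W'}(y,y,z)\;=\;M_{W'}(y-z+1,\,z-1).
\]
The classical bivariate $H=M$ theorem of Thiel only controls the slice $z=1$ of $\H_{W'}$, namely $H_{W'}(x,y)=\H_{W'}(x,y,1)$, whereas the left-hand side above is the slice $\underline{x}=y$ at general $z$. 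Bridging these two slices is not an inclusion--exclusion over parabolics and is not "carried out by the powers of $(1-y)$": it requires knowing that $\H_{W'}(\underline{x}-1,y,z)$ is homogeneous of degree $\rk W'$, so that $\H_{W'}(y,y,z)=z^{\rk W'}H_{W'}\bigl(\tfrac{y+1-z}{z},\tfrac{y}{z}\bigr)$, after which Thiel's theorem finishes the computation. That homogeneity is a genuine combinatorial fact about nonnesting partitions (the paper proves it, by a factorization $A=A'\cup A''$ with $A''\subset\Delta\setminus\supp(A')$, only \emph{after} this theorem), and your proposal never identifies it; as written, the "bookkeeping" you defer is precisely the missing idea.

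Beyond the gap, your route is genuinely different from the paper's and weaker in context. The paper never invokes Thiel's theorem here: it proves $\H(y,y,z)=\I(1,y,z-1)$ directly, computing the $\H$-side from the enumeration of nonnesting antichains by size and support (positive Narayana numbers) and the $\I$-side from the binomial theorem applied to the boolean upper ideals $\{\beta:\beta\gg\alpha\}$ of Proposition~\ref{booleanideals0} and Corollary~\ref{booleanideals}. This matters because the whole point of Section~\ref{sec_genFHM} is to \emph{rederive} the classical relations \eqref{eqFHM} from the generalized ones; a proof that inputs the classical $H=M$ theorem still establishes the stated polynomial identity (Thiel's proof is independent), but it forfeits the new proof of $H=M$ that the paper obtains as a corollary.
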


\begin{proof}
First write the expansion:
\[
  \prod_{ \delta \in A\cap \Delta } \Big(\frac {x_{r(\delta)}+y}z\Big) = \sum_{J\subset A\cap \Delta} \Big(\frac yz \Big)^{\# (A\cap\Delta)\backslash J}  \prod_{\delta \in J}  \Big(\frac {x_{r(\delta)}}z\Big).
\]
It follows:
\begin{align*}
  \H( \underline{x}+y, y , z) &=  \sum_{A\in  NN(W)} \; \sum_{J\subset A\cap \Delta } 
  \Bigg( \prod_{\delta \in J}  \Big(\frac {x_{r(\delta)}}z\Big) \Bigg)
  \Big(\frac yz\Big)^{\# (A \backslash J )} z^{\#\supp(A)} \\
  &= \sum_{J \subset \Delta } \Bigg( \prod_{\delta \in J} x_{r(\delta)} \Bigg) \sum_{\substack{ A\in NN(W) \\ J\subset A }}
     \Big(\frac yz\Big)^{\# (A \backslash J )} z^{\#\supp(A) - \#J } \\
  &= \sum_{J \subset \Delta } \Bigg( \prod_{\delta \in J} x_{r(\delta)} \Bigg) \H_{ W_{S\backslash r^{-1}(J) } }(y,y,z).
\end{align*}
In the proof of the Theorem~\ref{theoFI}, we have seen that $\I(\underline{x}+1,y,z)$ admits a similar expansion on the $x$-variables.
Consequently, $\I(\underline{x}+1,y,z-1)$ also admits a similar expansion, and it remains to show that $\H(y,y,z)=\I(1,y,z-1)$.

On one side we have:
\[
  \H(y,y,z) = \sum_{A\in NN(W)} y^{\#A}z^{\#\supp(A)-\#A} = \sum_{I\subset S} \; \sum_{k=1}^{\# I}  \Nar^+_k(W_I) y^k z^{\# I - k}.
\]
On the other side,
\begin{align*}
  \I(1,y,z-1) &= \sum_{\substack{ \alpha,\beta\in NC(W,c) \\ \alpha\ll\beta }} y^{\rk(\alpha)} (z-1)^{\rk(\beta)-\rk(\alpha)}  \\
              &= \sum_{\alpha\in NC(W,c)} y^{\rk(\alpha)} z^{\#\supp(\alpha) - \rk(\alpha) },
\end{align*}
where the last equality follows from the binomial theorem, since the $\beta$ such that $\beta\gg \alpha$ form a boolean lattice
whose maximal element is $\overline{\alpha}$, by Proposition~\ref{booleanideals0} and Corollary~\ref{booleanideals}.
This sum can again be expressed in terms of the numbers $\Nar^+_k(W_I)$, so that $\H(y,y,z)=\I(1,y,z-1)$.
\end{proof}

By combining the previous theorems, we get relations between $\F$-, $\H$-, and $\M$-polynomials:
\begin{align}
\label{FMgen}   \F(\underline{x},y,z) &=\M(\underline{x}+1,y-z,z), \\
\label{FHgen}    \F(\underline{x},y,z) &=\H(\underline{x}+y,y,z+1), \\
\label{HMgen}     \H(\underline{x},y,z) &=\M(\underline{x}-y+1,y-z+1,z-1). 
\end{align}

The next property is best seen on the $\H$-polynomial.

\begin{prop}
 $\H(\underline{x}-1,y,z)$ is homogeneous of degree $n$.
\end{prop}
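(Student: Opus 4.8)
The plan is to route everything through the interval polynomial $\I$ rather than attacking the definition of $\H$ directly. Starting from the already established relation $\H(\underline{x},y,z)=\I(\underline{x}-y+1,y,z-1)$ and substituting $\underline{x}\mapsto\underline{x}-1$, I would reduce the claim to showing that
\[
  \H(\underline{x}-1,y,z)=\I(\underline{x}-y,\,y,\,z-1)
\]
is homogeneous of degree $n$, so that the whole task becomes an explicit computation of the right-hand side from the definition of $\I$.

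First I would expand $\I(\underline{x}-y,y,z-1)$ over the pairs $\alpha\ll\beta$ in $NC(W,c)$, producing for each pair the factor $\prod_{s\in S\setminus\supp(\beta)}(x_s-y)$ together with $y^{\rk(\alpha)}(z-1)^{\rk(\beta)-\rk(\alpha)}$. The structural observation that makes the sum collapse is that $\alpha\ll\beta$ forces $\overline{\alpha}=\overline{\beta}$ by Proposition~\ref{conncomp}, hence $\supp(\alpha)=\supp(\beta)$. Consequently, for $\alpha$ fixed the product $\prod_{s\in S\setminus\supp(\beta)}(x_s-y)=\prod_{s\in S\setminus\supp(\alpha)}(x_s-y)$ is constant over all $\beta\gg\alpha$ and can be pulled out of the inner summation.

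Next I would evaluate the inner sum $\sum_{\beta\gg\alpha}(z-1)^{\rk(\beta)-\rk(\alpha)}$. Exactly as in the proof of the relation $\H(\underline{x},y,z)=\I(\underline{x}-y+1,y,z-1)$, the set $\{\beta:\beta\gg\alpha\}$ is a boolean lattice with top element $\overline{\alpha}$ (Proposition~\ref{booleanideals0} and Corollary~\ref{booleanideals}), so the binomial theorem gives $\sum_{\beta\gg\alpha}(z-1)^{\rk(\beta)-\rk(\alpha)}=z^{\#\supp(\alpha)-\rk(\alpha)}$. Assembling these two steps yields
\[
  \H(\underline{x}-1,y,z)=\sum_{\alpha\in NC(W,c)}\Bigl(\prod_{s\in S\setminus\supp(\alpha)}(x_s-y)\Bigr)\,y^{\rk(\alpha)}\,z^{\#\supp(\alpha)-\rk(\alpha)}.
\]
Homogeneity is then read off term by term: each factor $x_s-y$ is homogeneous of degree $1$ and there are $n-\#\supp(\alpha)$ of them, so the summand indexed by $\alpha$ has total degree $(n-\#\supp(\alpha))+\rk(\alpha)+(\#\supp(\alpha)-\rk(\alpha))=n$. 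Since every summand is homogeneous of degree $n$, so is $\H(\underline{x}-1,y,z)$.

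I expect the only real subtlety — the main obstacle — to be recognizing that one must work with the full multivariate $\I$ (so that the homogenizing factors $x_s-y$ actually appear) and that the identity $\supp(\alpha)=\supp(\beta)$ for $\alpha\ll\beta$ is precisely what keeps those factors outside the inner sum. Once this is seen, the boolean-lattice evaluation is identical to the one already carried out for the $\H=\M$ relation, and the final degree bookkeeping is immediate.
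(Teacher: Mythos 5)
Your proof is correct, but it takes a genuinely different route from the paper. The paper argues directly from the definition of $\H$ in terms of nonnesting partitions: it writes each $A\in NN(W)$ uniquely as $A'\cup A''$ with $A'\cap\Delta=\varnothing$ and $A''\subset\Delta\setminus\supp(A')$, sums over $A''$ for fixed $A'$ to produce the factor $\prod_{\delta\in\Delta\setminus\supp(A')}(1+x_{r(\delta)})$, and observes that after the shift $\underline{x}\to\underline{x}-1$ each resulting term has degree $(n-\#\supp(A'))+\#\supp(A')=n$. You instead route everything through the interval polynomial via the already-proved relation $\H(\underline{x},y,z)=\I(\underline{x}-y+1,y,z-1)$, using Proposition~\ref{conncomp} (equality of supports along $\ll$) to pull the product out of the inner sum and Proposition~\ref{booleanideals0} with Corollary~\ref{booleanideals} to collapse it by the binomial theorem; since that theorem and those structural facts are all established before this proposition, there is no circularity, and your degree count is right. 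What each approach buys: the paper's argument is self-contained at the level of $\H$'s definition and needs nothing about noncrossing partitions, while yours reuses the machinery already built and yields as a byproduct the explicit homogeneous expansion
\[
  \H(\underline{x}-1,y,z)=\sum_{\alpha\in NC(W,c)}\Bigl(\prod_{s\in S\setminus\supp(\alpha)}(x_s-y)\Bigr)\,y^{\rk(\alpha)}\,z^{\#\supp(\alpha)-\rk(\alpha)},
\]
an identity expressing the $\H$-polynomial as a sum over noncrossing rather than nonnesting partitions, which is of independent interest.
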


\begin{proof}
 An element $A\in NN(W)$ can be uniquely written $A'\cup A''$ where $A'\cap \Delta = \varnothing$ and $A''\subset \Delta \backslash \supp(A')$.
 Note that $\#\supp(A) = \#\supp(A') + \# A''$. For a fixed $A'$, any subset $A''\subset \Delta \backslash \supp(A')$ is valid, so that
 the sum over $A''$ factorizes and gives:
\begin{align*}
   \sum_{\substack{ A\in  NN(W) \\ A\cap(\Pi\backslash\Delta)=A' }} \Bigg( \prod_{ \delta \in A\cap \Delta } \Big(\frac {x_{r(\delta)}}z\Big) \Bigg) \Big(\frac yz\Big)^{\# A\cap( \Pi\backslash \Delta ) } z^{\#\supp(A)} \\
    = \Bigg( \prod_{\delta\in \Delta\backslash \supp(A')} (1+x_{r(\delta)}) \Bigg) \Big(\frac yz\Big)^{\# A' } z^{\#\supp(A')}.
\end{align*}
After a change of variables $\underline{x}\to \underline{x}-1$, the latter expression is homogeneous.
Since $\H(\underline{x}-1,y,z)$ is obtained by summing over $A'$, it is too.
\end{proof}

This homogeneity implies that the 3-variable polynomial $\H(x,y,z)$ can be expressed in terms of the 2-variable polynomial $H(y,z) = \H(1,y,z)$.
It also implies that the $\F$-, $\I$-, and $\M$-polynomials also become homogeneous after suitable shifts in the variables.
So the other 3-variable polynomials can also be expressed in terms of their 2-variable specialization from Section~\ref{triangles}.

To recover the $F=M$ identity in Section~\ref{triangles} from the present results, first use the homogeneity of $\F(x-1,y,z-1)$ 
(obtained from the previous proposition together with \eqref{FHgen}) to write:
\[
  \F(x-1,y,z-1) = x^n \F(0,\tfrac yx , \tfrac zx - 1 ),
\]
then the substitution $(x,z)\to (x+1,y+1)$ gives:
\[
  \F(x,y,y) = (1+x)^n \F(0,\tfrac{y}{1+x},\tfrac{y-x}{1+x}) = (1+x)^n \M(1,\tfrac{x}{1+x},\tfrac{y-x}{1+x})
\]
where the last equality comes from \eqref{FMgen}. Then using \eqref{FHMspec}, we recover the relation between $F(x,y)$ and $M(x,y)$ in \eqref{eqFHM}.
Similarly, the homogeneity of $\H(\underline{x}-1,y,z)$ gives:
\begin{equation}  \label{Hhomogeneity}
  \H(x-1,y,z) = z^n \H(\tfrac xz -1 , \tfrac yz , 1).
\end{equation}
Then the substitution $(x,z)\to(x+y+1,y+1)$ gives:
\[
  \H( x+y, y, y+1 ) = (1+y)^n \H( \tfrac{x}{1+y}, \tfrac{y}{1+y} , 1 ).
\]
Using \eqref{FHgen} on the left hand side, then \eqref{FHMspec}, we recover the relation between $F(x,y)$ and $H(x,y)$ in \eqref{eqFHM}.

\begin{prop}
 $\H(\underline{x}-1,y,z)$ is symmetric in $y$ and $z$.
\end{prop}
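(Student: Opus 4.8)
The plan is to peel off the $x$–variables using the factorisation already established in the preceding proposition, thereby reducing the claim to a one–variable self-reciprocity statement about the double-positive Narayana numbers, and then to derive that self-reciprocity from the self-duality of $NC(W,c)$.

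First I would recall that the proof of the preceding proposition expresses $\H$ as a sum over antichains $A\in NN(W)$ with $A\cap\Delta=\varnothing$, namely
\[
  \H(\underline{x},y,z)=\sum_{\substack{A\in NN(W)\\ A\cap\Delta=\varnothing}}\Bigg(\prod_{\delta\in\Delta\setminus\supp(A)}\big(1+x_{r(\delta)}\big)\Bigg)\Big(\tfrac{y}{z}\Big)^{\#A}z^{\#\supp(A)}.
\]
Substituting $\underline{x}\to\underline{x}-1$ replaces each factor $1+x_{r(\delta)}$ by $x_{r(\delta)}$ and gives
\[
  \H(\underline{x}-1,y,z)=\sum_{\substack{A\in NN(W)\\ A\cap\Delta=\varnothing}}\Bigg(\prod_{\delta\in\Delta\setminus\supp(A)}x_{r(\delta)}\Bigg)\,y^{\#A}\,z^{\#\supp(A)-\#A}.
\]
Collecting the antichains according to $I:=\supp(A)\subseteq\Delta$ factors out the squarefree monomial $\prod_{\delta\in\Delta\setminus I}x_{r(\delta)}$; since distinct $I$ give distinct monomials, the symmetry of $\H(\underline{x}-1,y,z)$ in $y$ and $z$ is equivalent to the symmetry in $y,z$ of the coefficient of each such monomial.

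Next I would identify that coefficient. For fixed $I$, the contributing antichains are exactly the nonnesting partitions of the standard parabolic $W_I$ that avoid its simple roots and have full support $I$. Summing the simple part of an antichain produces, as in the previous proof, a factor that vanishes at $x=-1$ unless the support is full; this yields $H_{W_I}(-1,t)=\sum_{\supp(A)=I}t^{\#A}$, so the coefficient in question is $z^{\#I}H_{W_I}(-1,y/z)$, where $H_{W_I}$ is the $H$–triangle of $W_I$ (its value at $x=-1$ being the generating polynomial of the double-positive Narayana numbers). The whole symmetry therefore reduces to the self-reciprocity $H_{W}(-1,t)=t^{n}H_{W}(-1,1/t)$ for an arbitrary finite Coxeter group $W$ of rank $n$ (applied to each $W_I$).

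Finally, I would prove this self-reciprocity using the $H$–$M$ relation in \eqref{eqFHM} and the self-duality of $NC(W,c)$. Evaluating that relation at $x=-1$ gives $H_{W}(-1,t)=(-t)^{n}M_{W}(\tfrac1t,-\tfrac{t+1}{t})$, while the same evaluation in the variable $1/t$ gives directly $t^{n}H_{W}(-1,1/t)=(-1)^{n}M_{W}(t,-(t+1))$. The Kreweras complement, being a rank-reversing anti-automorphism of $NC(W,c)$, yields the self-duality $M_{W}(x,y)=x^{n}M_{W}(\tfrac1x,\tfrac{y}{x})$; applying it to the first expression turns it into $(-1)^{n}M_{W}(t,-(t+1))$ as well, so the two agree and the claim follows. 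I expect this last paragraph to be the main obstacle: one must carry out the parabolic bookkeeping so that everything descends to a single Coxeter group, and, crucially, pin down the precise shape of the $M$–self-duality — it is $M_{W}(x,y)=x^{n}M_{W}(1/x,y/x)$, to be derived carefully from the Kreweras anti-automorphism — so that the two specializations of $M_{W}$ coincide exactly.
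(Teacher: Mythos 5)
Your proof is correct. The first half --- peeling off the $x$-variables via the decomposition $A=A'\cup A''$ of an antichain into its non-simple and simple parts, and observing that the coefficient of the squarefree monomial indexed by $I$ is $z^{\#I}H_{W_I}(-1,y/z)$ --- is the same reduction the paper performs, phrased there as $\H(\underline{x},y,z)=\sum_{J\subset S}\bigl(\prod_{s\in J}x_s\bigr)\H_{W_{S\backslash J}}(0,y,z)$, so that only the symmetry of $\H(0,y,z)$ for each parabolic needs checking; since $\H_{W}(0,y,z)=\sum_{I}z^{\#I}H_{W_I}(-1,y/z)$, your coefficientwise reduction and the paper's are equivalent. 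The endgame, however, is genuinely different: the paper invokes the homogeneity relation \eqref{Hhomogeneity} to write $\H(0,y,z)=z^nH(\tfrac1z-1,\tfrac yz)$ and then cites the two-variable symmetry \eqref{symm_FH}, whereas you isolate the palindromicity $H(-1,t)=t^nH(-1,1/t)$ of the double-positive Narayana polynomial and prove it from the $H=M$ relation together with the Kreweras self-duality of the $M$-triangle. The two routes are of equal depth, since \eqref{symm_FH} is itself a consequence of that self-duality, but yours is more self-contained and --- this is the real payoff --- pins down the self-duality in its correct form. As printed, $M(x,y)=x^nM(\tfrac1x,\tfrac1y)$ and \eqref{symm_FH} do not hold (already in rank one, where $M=1+x+y$ and $H=1+x$); the correct statements are $M(x,y)=x^nM(\tfrac1x,\tfrac yx)$, which is the one you derive and use, and correspondingly $H(x-1,y)=y^nH(\tfrac xy-1,\tfrac1y)$. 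So your insistence on re-deriving the self-duality from the Kreweras anti-automorphism is not redundant caution but exactly what makes the argument watertight.
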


\begin{proof}
Similar to the expansions we have seen in this section, we have:
\[
  \H(\underline{x},y,z) = \sum_{J\subset S} \Bigg( \prod_{s\in J} x_s \Bigg)   \H_{W_{S\backslash J}}(0,y,z).
\]
So we can just prove that $\H(0,y,z)$ is symmetric. By \eqref{Hhomogeneity}, we have
\[
  \H(0,y,z) = z^n \H(\tfrac 1z -1, \tfrac yz , 1) = z^n H(\tfrac 1z -1, \tfrac yz).
\]
By \eqref{symm_FH}, the latter is equal to $y^n H(\tfrac 1y -1 , \tfrac zy )$, which is precisely
the same up to exchanging $y$ and $z$.
\end{proof}

It is natural to look for an involution on nonnesting partitions that would prove the symmetry of $\H(\underline{x},y,z)$.
This is the subject of conjectures by Panyushev \cite[Conjecture~6.1]{panyushev}, see also \cite[Section~5]{chapoton2}.
To our knowledge this is still an open problem.

\section*{Acknowledgement}

We thank Cesar Ceballos for explanations about the subword complex.

\setlength{\parindent}{0mm}

\end{document}